\newtheorem{theorem}{Theorem}[section]
\newtheorem{corollary}[theorem]{Corollary}
\newtheorem{lemma}[theorem]{Lemma}
\newtheorem{proposition}[theorem]{Proposition}
\theoremstyle{definition}
\newtheorem{definition}[theorem]{Definition}
\newtheorem{example}[theorem]{Example}
\newtheorem{remark}[theorem]{Remark}
\numberwithin{equation}{section}
\newcommand{\AAA}{\mathbf A}
\newcommand{\GG}{\mathbf G}
\newcommand{\NN}{\mathbf N}
\newcommand{\PP}{\mathbf P}
\newcommand{\QQ}{\mathbf Q}
\newcommand{\RR}{\mathbf R}
\newcommand{\TT}{\mathbf T}
\newcommand{\VV}{\mathbf V}
\newcommand{\ZZ}{\mathbf Z}
\newcommand{\SL}{\mathbf{SL}}
\newcommand{\SP}{\mathbf{Sp}}
\newcommand{\SA}{\mathscr{A}}
\newcommand{\SC}{\mathscr{C}}
\newcommand{\SD}{\mathscr{D}}
\newcommand{\SI}{\mathscr{I}}
\newcommand{\SN}{\mathscr{N}}
\newcommand{\ST}{\mathscr{T}}
\newcommand{\SU}{\mathscr{U}}
\newcommand{\SV}{\mathscr{V}}
\newcommand{\SX}{\mathscr{X}}
\newcommand{\SY}{\mathscr{Y}}
\newcommand{\SZ}{\mathscr{Z}}
\newcommand{\lra}{\longrightarrow}
\newcommand{\hb}{{\rm H}_{\rm b}}
\newcommand{\hc}{{\rm H}_{\rm c}}
\newcommand{\hh}{{\rm H}}
\newcommand{\Is}{\mathrm{Is}}
\newcommand{\id}{\mathrm{Id}}
\newcommand{\ee}{{\mathrm{E}}}
\newcommand{\se}{\subseteq}
\newcommand{\sm}{\setminus}
\newcommand{\lft}{L^\infty}
\newcommand{\lw}{L^\infty_{\rm w*}}
\newcommand{\bu}{\bullet}
\newcommand{\cont}{\mathrm{C}}
\newcommand{\cw}{\cont_{\rm w*}}
\newcommand{\cm}{\mathcal{C}}
\newcommand{\rank}{\mathrm{rank}}
\newcommand{\minrank}{\mathrm{mink}}
\newcommand{\Rad}{\mathrm{Rad}}
\newcommand{\ti}{\widetilde}
\newcommand{\vn}{\varnothing}
\begin{document}
\title[Semi-simple groups and bounded cohomology]{On the bounded cohomology of semi-simple groups,\\ S-arithmetic groups and products}
\author{Nicolas Monod}
\address{Universit\'e de Gen\`eve}
\email{nicolas.monod@math.unige.ch}
\thanks{Supported in part by Fonds National Suisse}
\begin{abstract}
We prove vanishing results for Lie groups and algebraic groups (over any local field) in bounded cohomology. The main result is a vanishing below twice the rank for semi-simple groups. Related rigidity results are established for S-arithmetic groups and groups over global fields. We also establish vanishing and cohomological rigidity results for products of general locally compact groups and their lattices.
\end{abstract}
\maketitle
\let\languagename\relax  
\section{Introduction and statement of the results}
\subsection{Motivation}
The main object of this paper is to study the bounded cohomology of arithmetic groups and semi-simple Lie or algebraic groups. On the one hand, the results presented below can be approached by comparison with the classical vanishing theorems for semi-simple groups due to Borel--Wallach~\cite{Borel-Wallach}, G.~Zuckerman~\cite{Zuckerman} and W.~Casselman~\cite{Casselman} and with the classical question of invariance of the cohomology of arithmetic groups (\emph{e.g.}\ A.~Borel~\cite{Borel74} and Borel--Serre~\cite{Borel-Serre73}). On the other hand, this study is motivated by the growing array of applications of previous vanishing or non-vanishing results in bounded cohomology ever since M.~Gromov's seminal work~\cite{Gromov} (see~\cite{MonodICM} for a recent survey).

\smallskip

There is a degree of similarity with classical statements: in ordinary cohomology, one has vanishing below the rank for suitable non-trivial representations (V.3.3 and XI.3.9 in~\cite{Borel-Wallach}), whilst we establish vanishing below twice the rank. None of the classical methods, however, apply: the cohomology of semi-simple groups can for instance be reduced to Lie-algebraic questions by the van~Est isomorphism in the Lie case, and to twisted simplicial cohomology on the discrete Bruhat--Tits building in the non-Archimedean case (compare also Casselman--Wigner~\cite{Casselman-Wigner}). There are \emph{a priori} obstructions for any such method to work in bounded cohomology; even one-dimensional objects such as the free group are known to have bounded cohomology at least up to degree three. Indeed, no non-trivial dimension bound in bounded cohomology for any group is known. Still, we do make use of spherical Tits buildings.

\bigskip

Arithmetic groups occur as lattices in semi-simple groups and thus there is a corresponding \emph{restriction map} in cohomology; its image (for real coefficients) is called the \emph{invariant} part of the cohomology of the arithmetic group. This is the least mysterious part, since it comes ultimately from (primary and secondary) characteristic classes in the Lie case and vanishes altogether otherwise. The extent to which the cohomology of arithmetic groups is invariant has therefore been the focus of thorough investigations; the results depend highly on the type of the lattice and, in positive $\QQ$-rank, invariance holds only in lower degrees, \emph{e.g.}\ of the order $n/4$ for $\SL_n(\ZZ)$ (A.~Borel, Thm.~7.5 in~\cite{Borel74}). (For surveys, see J.-L.~Koszul~\cite{Koszul68}, J.-P.~Serre~\cite{Serre71}, J.~Schwermer~\cite{Schwermer90}.)

\smallskip

Eckmann--Shapiro induction provides a link between these questions and the cohomology of semi-simple groups with \emph{induction modules}. The difficulties in positive $\QQ$-rank are reflected in that the corresponding induction modules are not unitarisable in this case. In bounded cohomology, it turns out that invariance holds below twice the rank regardless of the type of the lattice; for instance, up to $2n-3$ for $\SL_n(\ZZ)$. This comes as at a purely functional-analytic cost and leads us to work with a class of non-separable non-continuous modules that we call \emph{semi-separable}. As pointed out to us by M.~Burger, our invariance results show in particular that the comparison map to ordinary cohomology fails to be an isomorphism in many higher rank cases.

\bigskip

Results in degree two were previously obtained in~\cite{Burger-Monod1},\cite{Burger-Monod3},\cite{Monod}; see also~\S6 in~\cite{Monod-Shalom1} which used already non-minimal parabolics to establish vanishing in degree two. For the special case of $\SL_n$, we presented vanishing (up to the rank only) in the note~\cite{MonodMRL} using special (mirabolic) maximal parabolics following our earlier work~\cite{MonodJAMS}. It has been established by T.~Hartnick and P.-E.~Caprace independently that these methods do not apply to other classical groups (personal communications).

\subsection{Semi-simple groups}
We begin with a statement for \emph{simple} groups in order to prune out excessive terminology. A \emph{local field} is any non-discrete locally compact field (thus including the Archimedean case and arbitrary characteristics).

\begin{theorem}\label{thm:simplesimple}
Let $k$ be a local field, $\,\GG$ a connected simply connected almost $k$-simple $k$-group and $V$ a continuous unitary $\,\GG(k)$-representation not containing the trivial one. Then
$$\hb^n(\GG(k); V)=0 \kern1cm\text{for all \ $n < 2\,\rank_k(\GG)$.}$$
This vanishing holds more generally for any semi-separable coefficient $\,\GG(k)$-module $V$ without $\,\GG(k)$-invariant vectors.
\end{theorem}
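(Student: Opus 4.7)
The plan is to combine the amenability of the minimal parabolic $P$ of $\GG(k)$ with the geometry of its spherical Tits building $\Delta$. Writing $r = \rank_k(\GG)$, the Solomon--Tits theorem ensures that $\Delta$ is $(r-2)$-connected (homotopy equivalent to a wedge of $(r-1)$-spheres); and the simplices of $\Delta$ are in order-reversing bijection with the proper standard parabolics $Q_\sigma \supseteq P$, whose Levi components $L_\sigma$ have semi-simple $k$-rank $r - 1 - \dim\sigma$. The idea is that the Tits building supplies roughly $r$ degrees of vanishing from its high connectivity, while the amenable-resolution direction, fed by an inductive statement for lower-rank Levis, supplies another $r$.

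Concretely, I would implement this by constructing a double complex in which one direction is the simplicial chain complex of $\Delta$, decorated with semi-separable modules induced from each parabolic $Q_\sigma$, and the other is an amenable-resolution differential for $\hb^*$. The intended form is
\[
E^{p,q} \;=\; \bigoplus_{\sigma \in \Delta^{(p)}} \lw\bigl((\GG(k)/Q_\sigma)^{q+1}; V\bigr)^{\GG(k)},
\]
with the Tits simplicial boundary horizontally and the bar differential vertically. The first spectral sequence, filtered by $q$, produces an $E_1$-page in column $p$ that, via Eckmann--Shapiro together with amenability of the unipotent radical $U_\sigma$, computes the bounded cohomology $\hb^*(L_\sigma; V^{U_\sigma})$ of each Levi. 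Induction on the $k$-rank of the ambient group --- the base case being the trivial vanishing in positive degree for amenable groups in rank zero --- shows that column $p$ vanishes in vertical degree $< 2(r - 1 - p)$.

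The second spectral sequence, filtered by $p$, uses Solomon--Tits: the horizontal augmented chain complex of $\Delta$ is acyclic below the top degree $r-1$, so the horizontal cohomology identifies the total complex, in a range of degrees, with $\hb^*(\GG(k); V)$ (the residual top-degree Steinberg contribution sits in degrees far above the vanishing window, so does not interfere). Combining the two filtrations, the total cohomology and hence $\hb^n(\GG(k); V)$ vanishes for $n < 2r$.

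The principal obstacle is the coefficient analysis at each simplex. Even though $V$ carries no $\GG(k)$-invariants, the subspaces $V^{U_\sigma}$ may be nonzero and need not be free of $L_\sigma$-invariants, so applying the inductive hypothesis for $L_\sigma$ is not automatic; a Howe--Moore / Mautner-type ergodicity statement, extended to the semi-separable setting, is required to propagate the no-invariants condition through the Levi components. A secondary technical difficulty is ensuring convergence and functoriality of both spectral sequences for non-continuous coefficient modules, which presumably motivates the very introduction of the semi-separable framework in this paper.
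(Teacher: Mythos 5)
Your architecture --- measurable cochains on the Tits building in one direction, an amenable resolution in the other, with parabolic induction and Mautner to kill the coefficients at each cell --- is genuinely the right family of ideas and matches the paper's toolkit. But the degree bookkeeping does not close, and the gap is structural, not technical: a single copy of the building cannot produce vanishing below $2r$. Your heuristic that the building contributes $r$ degrees and the Levi induction ``another $r$'' is not how the two filtrations combine. Concretely, if column $p$ of your double complex vanishes for $q<2(r-1-p)$, then a potentially nonzero entry in total degree $n=p+q$ exists as soon as $n\geq 2r-2-p$ for some $0\leq p\leq r-1$; the binding column is $p=r-2$ (rank-one Levis), which permits a nonzero class already in total degree $r$. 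Moreover the Steinberg term of the augmented horizontal complex enters at horizontal degree $r-1$, hence at total degree $r-1$ --- squarely inside, not ``far above,'' your claimed window $n<2r$ --- so the second spectral sequence identifies the total cohomology with $\hb^\bu(\GG(k);V)$ only below roughly $r$. The honest output of your construction is vanishing for $n<r$, i.e.\ below the rank (this is essentially the simpler argument the paper sketches in its introductory comments and attributes to the earlier note on $\SL_n$). Note also that once you invoke Mautner --- $V^{U_\sigma R}\se V^{T_I}$ is fixed by an isotropic almost simple factor, hence zero --- every column vanishes in \emph{all} degrees, so your induction on the rank of the Levi is both unnecessary and weaker than what is available; but even full column vanishing only buys $n\leq r-1$, because the resolution has length $r$.

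The missing idea is the paper's doubling trick. One runs the entire construction not for $\GG(k)$ but for $\ti G=G\times G$ (with opposite simple roots chosen in the second factor), whose Tits building is the spherical join of the building of $G$ with an opposite copy and therefore yields an exact sequence of coefficient modules of length $2r$. Restricting to the diagonal copy of $G$, the cells have $G$-stabilizers of the form $P_I\cap P_{I^-}^-$ (intersections of a standard parabolic with one of opposite type, via the full-measure big cell $P\cdot P^-$), and each such intersection contains a normal soluble subgroup $(V_I\rtimes T_I)\cap P_{I^-}^-$ containing $T_I$; factoring it out and applying Mautner kills all invariants, so every term of the length-$2r$ resolution is acyclic and the iterated long exact sequence gives $\hb^n(G;V)=0$ for $n<2r$. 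Without this doubling (or some substitute producing a resolution of length $2r$ by modules with vanishing bounded cohomology), your scheme cannot reach twice the rank. A secondary point you underestimate: the exactness of the $\lw$-valued Solomon--Tits sequence is itself a nontrivial step, handled in the paper by passing between continuous and weak-$*$ measurable cochains via the continuous-vectors functor.
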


The definition of semi-separability (Section~\ref{sec:semi-separable}) does not impose any additional restriction on the underlying Banach space of coefficient modules, which are by definition the dual of a continuous separable module. The generality of semi-separability allows our results to apply notably to certain (non-separable, non-continuous) modules of $\lft$-functions that are useful for studying lattices and for applications to ergodic theory. Further, all representations in our statements can be uniformly bounded instead of isometric.

In the absence of semi-separability, the statement can indeed fail (Example~\ref{ex:contre-ex}). We also point out that vanishing fails for the trivial representation; $\hb^2(-,\RR)$ has been determined in~\cite{Burger-Monod1} and extensively used \emph{e.g.}\ in~\cite{Burger-Iozzi-Wienhard03},\cite{Burger-Iozzi-Labourie-Wienhard}.

\bigskip

We now give the general statement for semi-simple groups over possibly varying fields; this generality is necessary \emph{e.g.}\ to view S-arithmetic groups as lattices. Let thus $\{\GG_\alpha\}_{\alpha\in A}$ be connected simply connected semi-simple $k_\alpha$-groups, where $\{k_\alpha\}_{\alpha\in A}$ are local fields and $A$ is a finite non-empty set. Consider the locally compact group $G=\prod_{\alpha\in A}\GG_\alpha(k_\alpha)$ defined using the Hausdorff $k_\alpha$-topologies. As usual, write
$$\rank(G)=\sum_{\alpha\in A} \rank_{k_\alpha}(\GG_\alpha).$$
Define further $\minrank(G)$ as the minimal rank of almost $k_\alpha$-simple factors of all $\GG_\alpha$.

\begin{theorem}\label{thm:semi-simple}
Let $V$ be a semi-separable coefficient $G$-module. (i)~If there are no fixed vectors for any of the almost $k_\alpha$-simple factors of any $\GG_\alpha$, then
\begin{align*}
\hb^n(G; V)&=0 \kern1cm\text{for all \ $n < 2\,\rank(G)$.}\\
\intertext{(ii)~If we only assume $V^G=0$, then still}
\hb^n(G; V)&=0 \kern1cm\text{for all \ $n < 2\,\minrank(G)$.}
\end{align*}
\end{theorem}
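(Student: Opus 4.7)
The plan is to reduce Theorem~\ref{thm:semi-simple} to the almost simple case of Theorem~\ref{thm:simplesimple} by induction on the number of almost simple factors, using a Hochschild--Serre spectral sequence kept within the semi-separable category. After decomposing each $\GG_\alpha$ into its almost $k_\alpha$-simple factors (harmless for bounded cohomology up to finite covers), I may assume $G = S_1 \times \cdots \times S_m$ is a product of almost simple local-field groups, with $\rank(G) = \sum_i \rank(S_i)$ and $\minrank(G) = \min_i \rank(S_i)$.

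For part~(i), I induct on $m$, the base case $m=1$ being Theorem~\ref{thm:simplesimple}. For the inductive step, split $G = S \times H$ with $S = S_m$ and $H = S_1 \times \cdots \times S_{m-1}$, and consider
\[
E_2^{p,q} = \hb^p\bigl(S;\,\hb^q(H;V)\bigr) \;\Longrightarrow\; \hb^{p+q}(G;V).
\]
Every simple factor of $H$ still acts on $V$ without invariants, so the induction hypothesis gives $\hb^q(H;V) = 0$ for $q < 2\rank(H)$. In any total degree $p+q < 2\rank(G)$ this forces $p < 2\rank(S)$, so Theorem~\ref{thm:simplesimple} applied to $S$ kills the surviving entries, provided each $\hb^q(H;V)$ is a semi-separable $S$-coefficient module without $S$-invariants. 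Preserving these hypotheses along the iteration is, I expect, the main technical obstacle: semi-separability should follow from representing $\hb^*(H;V)$ through $H$-invariants of the $\lft$-complex on powers of an amenable $H$-boundary $B_H$, so that the $S$-action is induced by the commuting $S$-action on $V$ alone; vanishing of $S$-invariants then reduces to showing that an $S$-invariant bounded $H$-cocycle valued in $V$ must be a coboundary whenever $V^S=0$, which I would extract from the commutation of the $S$- and $H$-actions in that boundary resolution.

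For part~(ii), with only $V^G=0$, I choose $j$ realising $\minrank(G) = \rank(S_j)$, factor $G = S_j \times H$, and argue by induction on $m$ from the short exact sequence
\[
0 \lra V^{S_j} \lra V \lra V/V^{S_j} \lra 0.
\]
The quotient $V/V^{S_j}$, after replacement by a semi-separable model with no $S_j$-invariants if necessary, is killed in degrees $n < 2\rank(S_j) = 2\minrank(G)$ by combining the spectral sequence for $H \lhd G$ with the bound $\hb^q(S_j;V/V^{S_j}) = 0$ for $q < 2\rank(S_j)$ coming from Theorem~\ref{thm:simplesimple}. The submodule $V^{S_j}$ carries the trivial $S_j$-action and satisfies $(V^{S_j})^H = V^G = 0$; via the spectral sequence for $S_j \lhd G$ its contribution is controlled by $\hb^p\bigl(H;\,\hb^q(S_j;V^{S_j})\bigr)$, in which each coefficient module still has no $H$-invariants, so the inductive form of part~(ii) for $H$ (using $\minrank(H) \geq \minrank(G)$) yields the required vanishing. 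The recurring cost across both parts is the bookkeeping needed to carry semi-separability, trivial-subgroup action and vanishing-of-invariants hypotheses through quotients, extensions and spectral sequences; that bookkeeping, rather than any new cohomological input, is the real work.
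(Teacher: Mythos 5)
Your argument rests on a Hochschild--Serre spectral sequence with $E_2^{p,q}=\hb^p(S;\hb^q(H;V))$, and this is precisely the tool that is \emph{not} available in bounded cohomology in the generality you need. The space $\hb^q(H;V)$ is in general only a seminormed space (coboundaries need not be closed), so it is not a Banach space, let alone a coefficient or semi-separable $S$-module; identifying the $E_2$-page of the spectral sequence of a (co)normal subgroup with $\hb^p$ of the quotient in $\hb^q$ of the kernel is a well-known open problem in this theory beyond low degrees. Your proposed repairs do not close this: representing $\hb^\bu(H;V)$ by the $\lft$-complex on powers of an $H$-boundary gives an $S$-action on a \emph{complex}, not a usable module structure on its cohomology, and the assertion that ``an $S$-invariant bounded $H$-cocycle valued in $V$ must be a coboundary whenever $V^S=0$'' is exactly the nontrivial vanishing $\hb^q(H;V)^S=0$ restated, not something that follows from the commutation of the two actions. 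The paper is structured specifically to avoid this obstacle: for part~(ii) it proves a product stability statement (Proposition~\ref{prop:product_glance}) by resolving $V$ with the modules $\lw(G_2^{p+1};U)$ and applying the induction isomorphism factor by factor together with the long exact sequence for $0\to V^{G_1}\to V\to V/V^{G_1}\to 0$ --- no spectral sequence with coefficients in cohomology groups ever appears.

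There is a second, independent problem: even if some substitute for the spectral sequence were available, an induction on the number of almost simple factors of the kind you describe does not produce the bound $2\,\rank(G)=2\sum_i\rank(S_i)$. The paper's own factor-by-factor argument (Proposition~\ref{prop:product_glance}) demonstrably yields only $2\,\minrank(G)$ --- the paper says explicitly that in this argument ``the number of factors does not contribute anything to the vanishing'' --- and that is why it is used only for part~(ii). The additivity of rank in part~(i) comes from an entirely different mechanism: one forms $\ti G=G\times G$ with opposite roots in the second copy, takes the Tits building of $\ti G$ (the \emph{join} of the buildings of all factors of both copies, of rank $r=2\,\rank(G)$), and feeds the resulting length-$r$ exact sequence of modules $\lw(\ST^{(p)};V)$ (Theorem~\ref{thm:complexe:lw}) into Proposition~\ref{prop:ex_sequence}; the vanishing of $\hb^\bu(G;\lw(\ti G/\ti P_{\ti I};V))$ is then obtained by $\lft$-induction to the subgroups $P_I\cap P_{I^-}^-$, factoring out their amenable normal subgroup and invoking the Mautner phenomenon (Proposition~\ref{prop:mautner}). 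Note also that the paper derives Theorem~\ref{thm:simplesimple} as a special case of Theorem~\ref{thm:semi-simple}, so taking it as an independent base case, while not circular (the single-factor argument is self-contained), inverts the actual logical order. To salvage your plan you would need both a new device replacing the $E_2$-identification and a new idea for making the ranks add across factors; the join construction supplies both at once, which is why the paper uses it.
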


\noindent
Again, the assumptions on $V$ are necessary.

\begin{remark}[{{\bfseries General algebraic groups}}]\label{rem:general_groups}
The above theorem implies immediately a vanishing result for general algebraic groups, but the statement becomes somewhat more convoluted: Let $G$ be the group of rational points, $R<G$ its amenable radical and $V$ a semi-separable coefficient $G$-module. One has isomorphisms $\hb^\bu(G; V)\cong \hb^\bu(G; V^R) \cong \hb^\bu(G/R; V^R)$, see \emph{e.g.}~\cite[8.5.2, 8.5.3]{Monod}. Now $G/R$ is semi-simple, and we may pass to the (algebraic) connected component of the identity without affecting vanishing. To account for the lack of simple connectedness, one replaces the assumption on invariant vectors of the almost simple factors (of $G/R$ in $V^R$) by the corresponding assumption on their canonical normal cocompact subgroups as defined in~\cite[\S6]{Borel-Tits73} (see also~I.1.5 and~I.2.3 in~\cite{Margulis}). As an illustration of this procedure in the related case of Lie groups, see Corollary~\ref{cor:restriction_R}.
\end{remark}

\subsection{Lattices in semi-simple groups}
Turning to lattices, the next result shows that the real bounded cohomology of S-arithmetic groups is invariant below twice the rank.

\begin{corollary}\label{cor:restriction}
Let $G$ be as for Theorem~\ref{thm:semi-simple} and let $\Gamma<G$ be an irreducible lattice. Then the restriction map
$$\hb^n(G; \RR) \lra \hb^n(\Gamma; \RR)$$
is an isomorphism for all $\ n < 2\,\rank(G)$.
\end{corollary}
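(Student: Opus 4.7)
The plan is to reduce the corollary to Theorem~\ref{thm:semi-simple} via the Eckmann--Shapiro induction isomorphism in bounded cohomology. The standard identification
$$\hb^n(\Gamma; \RR) \cong \hb^n\bigl(G; \lft(G/\Gamma, \RR)\bigr)$$
identifies the restriction map $\hb^n(G;\RR) \to \hb^n(\Gamma;\RR)$ with the map on cohomology induced by the inclusion of constants $\RR \hookrightarrow \lft(G/\Gamma, \RR)$ (well-defined because $G/\Gamma$ carries a $G$-invariant probability measure). The short exact sequence of $G$-modules
$$0 \to \RR \to \lft(G/\Gamma, \RR) \to \lft_0(G/\Gamma, \RR) \to 0,$$
where $\lft_0$ denotes the kernel of integration, then gives a long exact sequence reducing the corollary to the vanishing statement
$$\hb^n\bigl(G; \lft_0(G/\Gamma, \RR)\bigr) = 0 \kern1cm\text{for all } n < 2\,\rank(G).$$

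This I would establish by applying Theorem~\ref{thm:semi-simple}(i) with $V = \lft_0(G/\Gamma, \RR)$. Semi-separability of $V$ is automatic: its predual $L^1_0(G/\Gamma, \RR)$ is separable ($G$ being second countable) and carries a continuous $G$-action. The remaining, and substantive, step is to check that $V^H = 0$ for every almost $k_\alpha$-simple factor $H$ of every $\GG_\alpha$.

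For this last point I would exploit the normality of any almost simple factor $H$ in $G$, which identifies
$$\lft(G/\Gamma)^H \cong \lft(G/H)^{\pi(\Gamma)},$$
with $\pi\colon G \to G/H$ the projection and $\pi(\Gamma)$ acting on $G/H$ by translation. The irreducibility of $\Gamma$ implies that $\pi(\Gamma)$ is dense in $G/H$; since $G/H$ acts weak-$*$ continuously on $\lft(G/H)$ (the dual of the continuous separable module $L^1(G/H)$), this density forces $\lft(G/H)^{\pi(\Gamma)} = \lft(G/H)^{G/H} = \RR$, whence $V^H = 0$ as required. The real obstacle is already overcome in Theorem~\ref{thm:semi-simple}; Corollary~\ref{cor:restriction} is essentially a formal translation to lattices, the only substantive check being the invariant-free property of the induced coefficient module under each almost simple factor, handled above by the density/weak-$*$ continuity argument.
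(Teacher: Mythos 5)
Your overall strategy is exactly the paper's: $\lft$-induction identifies the restriction map with the map induced by $\RR\hookrightarrow\lft(G/\Gamma)$, the dual short exact sequence $0\to\RR\to\lft(G/\Gamma)\to\lft_0(G/\Gamma)\to 0$ and the long exact sequence reduce everything to the vanishing of $\hb^n(G;\lft_0(G/\Gamma))$ for $n<2\,\rank(G)$, and this is to come from Theorem~\ref{thm:semi-simple}(i). Two points in your verification of the hypotheses need attention. The minor one: semi-separability of $V=\lft_0(G/\Gamma)$ is not ``automatic'' from separability and continuity of the predual $L^1_0(G/\Gamma)$ --- that only shows $V$ is a coefficient module. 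Definition~\ref{defi:semi-separable} asks for an injective \emph{dual} $G$-morphism into a \emph{separable} coefficient module; here one takes the inclusion $\lft_0(G/\Gamma)\to L^2_0(G/\Gamma)$, which is dual and equivariant because the measure on $G/\Gamma$ is finite and invariant.

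The genuine gap is in the last step. The groups $\GG_\alpha$ are allowed to have $k_\alpha$-anisotropic factors, whose groups of points form a compact normal subgroup $K\lhd G$ (Remark~\ref{rem:compact_factors}). Your claim that $V^H=0$ for \emph{every} almost $k_\alpha$-simple factor $H$ is then false, and the density assertion underlying it fails: if $H$ is a compact factor, $\Gamma\cdot H$ is a proper closed subset of $G$, the $H$-action on $G/\Gamma$ is not ergodic, and $V^H\neq 0$. Moreover, even for an isotropic factor $H$, Margulis' criterion (Remark~\ref{rem:irreducibility}) only gives density of $\Gamma\cdot(K\cdot H)$, not of $\Gamma\cdot H$; when $K\neq 1$ one can have $V^H\neq 0$ for isotropic $H$ as well (e.g.\ $G=G_1\times K$ with $\Gamma$ the graph of a homomorphism of a lattice of $G_1$ onto a finite subgroup of $K$). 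So Theorem~\ref{thm:semi-simple}(i) does not apply as literally stated. The paper's proof routes around this: by Remark~\ref{rem:compact_factors}, part~(i) is in fact established under the weaker hypothesis that no nonzero vector is fixed simultaneously by $K$ and by some isotropic factor, i.e.\ $V^N=0$ for every $N=K\cdot H$ with $H$ isotropic; and irreducibility delivers exactly the density of $\Gamma\cdot N$ for such $N$, so your weak-$*$ continuity argument applies verbatim with $N$ in place of $H$. With that substitution your proof closes; in the special case where no $\GG_\alpha$ has anisotropic factors, it is already complete as written.
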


In particular, the real bounded cohomology of S-arithmetic groups is completely determined by the corresponding algebraic group below twice the rank. It is perhaps surprising that in spite of the vanishing results for non-trivial representations, we still know rather little about $\hb^n(G; \RR)$, even for $G=\SL_n(\RR)$ (some results are in~\cite{Burger-MonodERN},\cite{MonodJAMS}).

\medskip

In the Archimedean case, we can use \'E.~Cartan's correspondence between semi-simple Lie groups and symmetric spaces to deduce a result on the singular bounded cohomology of locally symmetric spaces. In order to state it, we need to introduce a generalisation of the notion of orientability and orientation cover. We call a connected locally symmetric space \emph{ample} if its fundamental group lies in the neutral component $\Is(\ti X)^\circ$ of the isometry group of its universal cover. Every locally symmetric space has a canonical smallest finite cover that is ample.

\begin{corollary}\label{cor:spaces}
Let $X$ be a connected ample finite volume irreducible Riemannian locally symmetric space of non-compact type. Then the real singular bounded cohomology $\hb^n(X)$ is canonically isomorphic to $\hb^n(\Is(\ti X)^\circ)$ for all $\ n < 2\,\rank(X)$.
\end{corollary}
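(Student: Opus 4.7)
The plan is to reduce Corollary~\ref{cor:spaces} to Corollary~\ref{cor:restriction} via the classical identification between the bounded cohomology of an aspherical space and that of its fundamental group. Set $\ti X$ for the universal cover (a Riemannian symmetric space of non-compact type, hence diffeomorphic to Euclidean space and in particular contractible), and write $G=\Is(\ti X)^\circ$, $\Gamma=\pi_1(X)$. Since $\ti X$ is contractible, $X$ is a $K(\Gamma,1)$, and by Gromov's vanishing/mapping theorem the canonical map $\hb^n(X;\RR)\to\hb^n(\Gamma;\RR)$ is an isometric isomorphism in every degree. Thus it suffices to identify $\hb^n(\Gamma;\RR)$ with $\hb^n(G;\RR)$ in the range $n<2\,\rank(X)$.

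The ampleness hypothesis is precisely what places $\Gamma$ inside $G$; together with finite volume of $X$ it makes $\Gamma$ a lattice in $G$, and the irreducibility of $X$ (i.e.\ of the de~Rham decomposition of $\ti X$) translates into irreducibility of $\Gamma<G$ in the sense required by Corollary~\ref{cor:restriction}. Moreover, $\rank(G)=\rank(\ti X)=\rank(X)$.

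The only remaining issue is that Corollary~\ref{cor:restriction}, via Theorem~\ref{thm:semi-simple}, is stated for the group of $\RR$-points of a simply connected semi-simple $\RR$-group, whereas $G=\Is(\ti X)^\circ$ is centerless. I would therefore pass to the simply connected cover $G^\sharp=\prod_\alpha\GG_\alpha(\RR)$ of the adjoint form of $G$, together with the central isogeny $\pi:G^\sharp\to G$ whose kernel $Z$ is finite. Let $\Gamma^\sharp=\pi^{-1}(\Gamma)$, which remains an irreducible lattice in $G^\sharp$. Corollary~\ref{cor:restriction} applied to $G^\sharp$ yields that $\hb^n(G^\sharp;\RR)\to\hb^n(\Gamma^\sharp;\RR)$ is an isomorphism for $n<2\,\rank(G^\sharp)=2\,\rank(X)$. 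Since $Z$ is finite, hence amenable and central in both groups, the Hochschild--Serre machinery for bounded cohomology (see, e.g., Chapter~8 of~\cite{Monod}) gives $\hb^n(G^\sharp;\RR)\cong\hb^n(G;\RR)$ and $\hb^n(\Gamma^\sharp;\RR)\cong\hb^n(\Gamma;\RR)$ naturally in every degree, and these identifications intertwine the two restriction maps. Chaining the four isomorphisms produces the asserted canonical isomorphism $\hb^n(X;\RR)\cong\hb^n(\Is(\ti X)^\circ;\RR)$ for $n<2\,\rank(X)$.

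The main obstacle is the central-isogeny bookkeeping: one must check that lifting $\Gamma$ to $\Gamma^\sharp$ preserves both the lattice property and irreducibility (straightforward since $Z$ is finite), and that the restriction maps assemble into a commutative square before and after modding out $Z$, so that the composite isomorphism is indeed the restriction $\hb^n(\Is(\ti X)^\circ)\to\hb^n(X)$ induced by $X=\Gamma\backslash\ti X$. The analogous subtlety is already implicit in Remark~\ref{rem:general_groups}, and would be handled in the same spirit.
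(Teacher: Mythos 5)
Your proposal is correct and follows essentially the same route as the paper: identify $\hb^n(X)$ with $\hb^n(\pi_1(X);\RR)$ via contractibility of $\ti X$ (Gromov), then handle the discrepancy between $\Is(\ti X)^\circ$ and a simply connected algebraic group by a finite central isogeny, using amenability of the finite kernel to get inflation isomorphisms before invoking Corollary~\ref{cor:restriction}. The paper merely packages the isogeny step as a separate statement (Corollary~\ref{cor:restriction_R}, citing Borel--Tits and the Kneser--Tits fact that $\ti\GG(\RR)^+=\ti\GG(\RR)$ to justify surjectivity of $\ti\GG(\RR)\to\Is(\ti X)^\circ$), but the argument is the same.
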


\noindent
(In particular, it follows that real singular bounded cohomology is stable along the inverse system of finite covers of $X$.)

\bigskip

With non-trivial coefficients, we have results for lattices that are in direct analogy with the above Theorems~\ref{thm:simplesimple} and~\ref{thm:semi-simple}, and indeed can be deduced from them thanks to the flexibility afforded by semi-separability. We first consider lattices in simple groups:

\begin{corollary}\label{cor:lattice_simple}
Let $k$ be a local field, $\GG$ a connected simply connected almost $k$-simple $k$-group, $\Gamma< \GG(k)$ a lattice and $W$ any semi-separable coefficient $\Gamma$-module $W$ without $\Gamma$-invariant vectors. Then
$$\hb^n(\Gamma; W)=0 \kern1cm\text{for all \ $n <  2\,\rank_k(\GG)$.}$$
\end{corollary}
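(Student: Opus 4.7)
The plan is to reduce this to Theorem~\ref{thm:simplesimple} by a Shapiro/induction argument, passing from $\Gamma$ to the ambient simple group $G=\GG(k)$ at the cost of replacing the coefficient module $W$ by an $\lft$-type induced module $\ind_\Gamma^G W$; the ``semi-separable'' hypothesis is precisely what will let us feed this inflated module back into the semi-simple statement.

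\smallskip

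First I would form the induced coefficient $G$-module
$$V \;=\; \lw(G,W)^\Gamma,$$
that is, the space of weak-$*$ measurable, essentially bounded, $\Gamma$-equivariant functions $G\to W$ (with $\Gamma$ acting on $G$ on the left by translations and on $W$ by its given action), regarded as a $G$-module via right translations of the argument. The predual of this space is naturally identified with $L^1(\Gamma\bsl G, W^\flat)$, where $W^\flat$ is the continuous separable Banach $\Gamma$-module whose dual is $W$. Since $G$ is second countable, $\Gamma\bsl G$ carries a $\sigma$-finite $G$-quasi-invariant measure with a second countable underlying measurable structure, and $W^\flat$ is separable by hypothesis, the predual $L^1(\Gamma\bsl G,W^\flat)$ is a continuous separable isometric $G$-module. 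This exhibits $V$ as a semi-separable coefficient $G$-module. Shapiro's lemma for bounded cohomology (in the form used throughout the paper, cf.\ the reference to \cite{Monod}) then yields a canonical isomorphism
$$\hb^n(\Gamma;W) \;\cong\; \hb^n(G;V) \kern1cm\text{for all } n\ge 0.$$

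\smallskip

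Second, I would check that $V$ carries no $G$-invariant vectors. A $G$-invariant element of $\lw(G,W)^\Gamma$ is (essentially) a constant function $G\to W$ whose constant value is $\Gamma$-invariant; the assumption $W^\Gamma=0$ therefore forces $V^G=0$. Since $\GG$ is almost $k$-simple, ``no invariant vectors under the almost $k$-simple factors of $\GG$'' just means $V^G=0$, so both hypotheses of Theorem~\ref{thm:simplesimple} are met. That theorem gives
$$\hb^n(G;V) \;=\; 0 \kern1cm\text{for all } n<2\,\rank_k(\GG),$$
and combining this with the Shapiro isomorphism above yields the desired vanishing for $\Gamma$.

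\smallskip

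The genuine obstacle is the first step: establishing semi-separability of the induced module $V$ and the concomitant Shapiro-type identification. Contrary to ordinary cohomology, one cannot use an algebraic induction because $\ell^\infty(\Gamma\bsl G,W)$ is much too large; the correct object is the $\lft$-type construction above, which is typically non-separable and the $G$-action on which is typically not norm-continuous. This is exactly the setting that motivated the semi-separable formalism, and verifying that $V$ indeed fits in that class (via identification of a continuous separable predual with a compatible $G$-action) is where the real work lies. Everything downstream—no invariants, then Theorem~\ref{thm:simplesimple}—is then essentially a bookkeeping step.
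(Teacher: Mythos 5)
Your overall route is the paper's own: induce to $V=\lw(G;W)^\Gamma$, invoke the Shapiro isomorphism~\eqref{eq:induction}, note $V^G=0$ because invariant elements are constant maps into $W^\Gamma=0$, and apply Theorem~\ref{thm:simplesimple}. (The paper phrases this as a special case of Corollary~\ref{cor:lattice_semisimple_mink}, but the content is identical.) However, the step you yourself identify as ``where the real work lies'' contains a genuine error. You argue that $V$ is semi-separable because its predual $L^1(\Gamma\bsl G;W^\flat)$ is a continuous separable $G$-module. That only shows $V$ is a \emph{coefficient} module; by Definition~\ref{defi:semi-separable}, semi-separability is the strictly stronger requirement that there exist an injective \emph{dual} $G$-morphism from $V$ \emph{into} a separable coefficient module, not that $V$ have a separable predual (every coefficient module has one by definition). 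The distinction is not pedantic: Example~\ref{ex:contre-ex} exhibits coefficient modules such as $\lft(G)/\RR$ — duals of separable continuous modules — for which the vanishing theorem fails. A telltale sign of the gap is that your argument never uses the hypothesis that $W$ itself is semi-separable.

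The correct verification is Lemma~\ref{lemma:semi-separable} (applied as in Lemma~\ref{lemma:ind:lattices}): starting from an injective dual $\Gamma$-map $W\to U_0$ with $U_0$ a separable coefficient module, one composes to obtain an injective dual $G$-map
$$V=\lw(G;W)^\Gamma\lra \lw(G;U_0)^\Gamma \cong \lft(\Gamma\bsl G;U_0)\lra L^2(\Gamma\bsl G;U_0),$$
where the last inclusion uses that $\Gamma\bsl G$ carries a \emph{finite} invariant measure (this is where the lattice hypothesis enters in an essential analytic way), and the target is a separable dual by the Radon--Nikod\'ym property of separable dual spaces. Once this is in place, the rest of your argument goes through as written.
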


\noindent
Combining this with (a minor variation on) Corollary~\ref{cor:restriction}, one finds that without any assumption on the semi-separable coefficient $\Gamma$-module $W$ we have isomorphisms
$$\hb^n(\Gamma; W)\ \cong\ \hb^n(\Gamma; W^\Gamma)\ \cong\ \hb^n(G; W^\Gamma)$$
induced by the inclusion $W^\Gamma\to W$ and the restriction, respectively.

\medskip

For general semi-simple groups and arbitrary semi-separable coefficient $\Gamma$-module, the statement takes the form of a \emph{rigidity} result:

\begin{corollary}\label{cor:lattice_semisimple}
Let $G$ be as for Theorem~\ref{thm:semi-simple}, let $\Gamma<G$ be an irreducible lattice and let $W$ be a semi-separable coefficient $\Gamma$-module. If $\hb^n(\Gamma; W)$ is non-zero for some $n <  2\,\rank_k(\GG)$, then the $\Gamma$-representation extends continuously to $G$ upon possibly passing to a non-trivial closed invariant subspace of $W$.
\end{corollary}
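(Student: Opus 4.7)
The approach is to reduce to Theorem~\ref{thm:semi-simple} via Eckmann--Shapiro induction, exploiting the fact that the induced module of a semi-separable $\Gamma$-module is again a semi-separable coefficient $G$-module. Given $W$, I form the induced module $V := \lw(G, W)^{\Gamma}$, with $\Gamma$ acting by left translation on $G$ combined with its given action on $W$, and endowed with the $G$-action by right translation. One verifies that $V$ is a semi-separable coefficient $G$-module and that Eckmann--Shapiro provides a canonical isomorphism $\hb^{n}(\Gamma; W) \cong \hb^{n}(G; V)$. By hypothesis this is non-zero for some $n < 2\,\rank(G)$, so Theorem~\ref{thm:semi-simple}(i) applied contrapositively produces an almost $k_{\alpha}$-simple factor $H$ of some $\GG_{\alpha}(k_{\alpha})$ such that the closed $G$-invariant subspace $V^{H} \subseteq V$ of $H$-fixed vectors is non-zero.

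The heart of the argument is then to extract from $V^{H} \neq 0$ a continuous extension to $G$ of the $\Gamma$-action on a non-trivial closed subspace of $W$. Elements of $V^{H}$ are, by construction, $\Gamma$-equivariant weak-$*$ measurable maps $G \to W$ that factor through $G/H$. By irreducibility of $\Gamma$ together with Howe--Moore-type ergodicity on $G/H$, the projection of $\Gamma$ into $G/H$ has dense image, and this density converts the measurable $\Gamma$-equivariance of any non-zero $f \in V^{H}$ into an algebraic compatibility condition that determines a continuous $G$-action on the essential image of $f$ extending the given $\Gamma$-action. Letting $W_{0} \subseteq W$ be the closed $\Gamma$-invariant subspace generated by these essential values then produces the desired non-trivial extension.

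The main obstacle is precisely this last step: passing from an $H$-invariant vector in the induced module to a genuinely \emph{continuous} extension of the $\Gamma$-representation rather than merely a measurable one, and guaranteeing that the resulting subspace of $W$ is non-zero. This will rely essentially on the semi-separability of $W$, so that measurable sections in the induced direction can be interpreted coherently with the weak-$*$ Banach structure, together with ergodicity of $\Gamma$ on $G/H$, which collapses the dependence on $G/H$ to a single well-defined extension. Analogous density-and-ergodicity mechanisms are the backbone of the degree-two rigidity results in~\cite{Burger-Monod1,Burger-Monod3,Monod} cited in the introduction, and I expect the argument here to require the same careful handling of weak-$*$ measurability.
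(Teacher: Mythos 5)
Your first half is exactly the paper's route: induce to $V=\lw(G;W)^\Gamma$, note semi-separability of $V$ (Lemma~\ref{lemma:ind:lattices}), apply the induction isomorphism~\eqref{eq:induction}, and run Theorem~\ref{thm:semi-simple}~(i) contrapositively to get a non-zero space of fixed vectors. (One small correction there: because of possible compact factors, the subgroup produced is not a bare almost simple factor $H$ but rather $N=$ the product of the compact part $K$ with at least one isotropic factor; it is for such $N$ that Margulis' criterion gives density of $\Gamma\cdot N$ --- see Remarks~\ref{rem:compact_factors} and~\ref{rem:irreducibility}.)

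The second half is where there is a genuine gap, and you have correctly identified its location but not closed it. An element $f\in V^N$ is only a weak-$*$ measurable function class $G\to W$; it has no well-defined ``essential values'', and density of $\Gamma\cdot N$ in $G$ together with ergodicity does not convert measurable equivariance into continuity --- a measurable class is simply not determined by, nor does it determine, data on a dense orbit. The paper's resolution is not ergodicity but the $\cm$-machinery of Section~\ref{sec:complements_modules}: since continuous vectors are weak-$*$ dense in any coefficient module (one replaces $f$ by $\pi(\psi_\alpha)f$ for an approximate identity $\psi_\alpha\in L^1(G)$, as in Proposition~\ref{prop:cont_module}), the submodule $\cm V^N$ is non-zero whenever $V^N$ is; and by Lemma~\ref{lemma:cont_lw} the continuous vectors of the induced module are honest (uniformly) continuous maps $G\to W$, so evaluation at the identity is defined and, by density of $\Gamma\cdot N$, is an isometric $\Gamma$-embedding of $\cm V^N$ onto the subspace $W_N\se W$, which therefore carries the desired continuous $G$-structure (Lemmas~\ref{lemma:fixed_induced} and~\ref{lemma:fixed_induced_notcont}). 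Note also that semi-separability of $W$ plays no role in this extraction step (it is only needed to invoke Theorem~\ref{thm:semi-simple}); the smoothing argument works for any coefficient module. As written, your proposal asserts the conclusion of this step rather than proving it, and the mechanism you point to would not suffice.
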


In parallel to the second part of Theorem~\ref{thm:semi-simple}, there is a stronger result but in lower degrees:

\begin{corollary}\label{cor:lattice_semisimple_mink}
Let $G$ be as for Theorem~\ref{thm:semi-simple}, $\Gamma<G$ any lattice and $W$ a semi-separable coefficient $\Gamma$-module without invariant vectors. Then
$$\hb^n(\Gamma; W)=0 \kern1cm\text{for all \ $n <  2\,\minrank(\GG)$.}$$
\end{corollary}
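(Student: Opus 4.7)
The plan is to reduce the statement to Theorem~\ref{thm:semi-simple}(ii) via Eckmann--Shapiro induction along the inclusion $\Gamma < G$. Concretely, I form the induced coefficient $G$-module $\Ind_\Gamma^G W$, realised as $\lw(\Gamma\bsl G, W)$ with $G$ acting by right translation on the base, and invoke the Eckmann--Shapiro isomorphism
$$\hb^n(\Gamma; W)\ \cong\ \hb^n(G; \Ind_\Gamma^G W),$$
valid for bounded cohomology of locally compact groups with coefficient modules (see \emph{e.g.}~\cite{Monod}).

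Two properties of the induced module must then be checked. Firstly, $\Ind_\Gamma^G W$ is semi-separable: writing $W = E^*$ for a continuous separable $\Gamma$-module $E$, one realises $\Ind_\Gamma^G W$ as the dual of $L^1(\Gamma\bsl G, E)$ under the standard Bochner measurability conventions, and this predual is separable and carries a continuous $G$-action. Secondly, $(\Ind_\Gamma^G W)^G = 0$: a $G$-invariant element of $\lw(\Gamma\bsl G, W)$ is essentially constant with value in $W^\Gamma$, and because $\Gamma$ is a lattice (so $\Gamma\bsl G$ has finite invariant measure) this space of constants is faithfully identified with $W^\Gamma$, which vanishes by hypothesis. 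With these in hand, Theorem~\ref{thm:semi-simple}(ii) applied to $\Ind_\Gamma^G W$ gives the desired vanishing for $n < 2\,\minrank(G)$.

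The principal technical point is the verification of semi-separability for $\Ind_\Gamma^G W$, which is generally non-separable and has a non-continuous $G$-action. This is exactly the flexibility the semi-separable class is designed to provide, and is what lets the result apply to an arbitrary coefficient module $W$ without any irreducibility assumption on $\Gamma$. One notes finally that using part~(i) of Theorem~\ref{thm:semi-simple} in its place would demand vanishing of invariants under each almost simple factor of $G$, a condition that $\Ind_\Gamma^G W$ does not generally satisfy when $\Gamma$ fails to be irreducible; this is precisely why the conclusion lands at $2\,\minrank(G)$ rather than $2\,\rank(G)$, in parallel with the relationship between parts~(i) and~(ii) of that theorem.
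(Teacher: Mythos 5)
Your overall strategy --- $\lft$-induction to $G$ followed by Theorem~\ref{thm:semi-simple}~(ii) --- is exactly the paper's, and your verification that $(\Ind_\Gamma^G W)^G=0$ reduces to $W^\Gamma=0$ via the finiteness of the invariant measure on $\Gamma\bsl G$ is correct. The gap is in your verification of semi-separability of the induced module. What you actually check --- that $\lw(\Gamma\bsl G;W)$ is the dual of the separable continuous module $L^1(\Gamma\bsl G;E)$ --- only establishes that it is a \emph{coefficient} $G$-module, which is true but strictly weaker than semi-separability. Definition~\ref{defi:semi-separable} demands an injective dual $G$-morphism \emph{into} a separable coefficient module, and the induced module is generally non-separable, so exhibiting a separable predual proves nothing in that direction. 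The distinction is not pedantic: Example~\ref{ex:contre-ex} exhibits $V=\lft(G)/\RR$, which is the dual of the continuous separable module $L^1_0(G)$ and yet fails both the semi-separability hypothesis and the conclusion of the vanishing theorems. If being a coefficient module sufficed, your argument would equally ``prove'' vanishing for that counterexample.

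The correct verification is Lemma~\ref{lemma:ind:lattices}, i.e.\ Lemma~\ref{lemma:semi-separable} applied to the cocycle $\beta\colon G\times\Omega\to\Gamma$ on a Borel fundamental domain $\Omega$: one must use the semi-separability of $W$ itself, namely an injective dual $\Gamma$-morphism $W\to U_0$ into a separable coefficient module, and compose to obtain
$$\lw(\Omega;W)\lra \lw(\Omega;U_0)=\lft(\Omega;U_0)\lra L^2(\Omega;U_0),$$
where the last inclusion exists precisely because $\Omega$ carries a \emph{finite} invariant measure ($\Gamma$ being a lattice), and where $L^2(\Omega;U_0)$ is a separable dual by the Radon--Nikod\'ym property. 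So the lattice hypothesis enters twice --- once for the invariant vectors, as you noted, and once, which you missed, to make the induced module semi-separable. With that repair the rest of your argument goes through and coincides with the paper's proof.
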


\subsection{Product groups and their lattices}
In various settings, rigidity results known for semi-simple groups of higher rank have later been established for products of general locally compact groups, underscoring a strong analogy (see \emph{e.g.}\ \cite{Burger-Mozes2},\cite{Shalom00},\cite{MonodCAT0}). Accordingly, one finds in~\cite{Burger-Monod3} rigidity results for~$\hb^2$ of general products. Even for \emph{products of discrete groups}, those results have found surprising applications~\cite{Monod-Shalom2}.

We shall establish statements for products that are analogous to Theorem~\ref{thm:semi-simple} and its consequences; in the situation at hand, the product case is simpler to prove indeed.

\begin{theorem}\label{thm:splitting}
Let $\ G= G_1\times \cdots\times G_\ell$ be a product of locally compact second countable groups and $\ V$ be a semi-separable coefficient $G$-module. If $\ V^{G_i}=0$ for all $\ 1\leq i\leq \ell$, then
$$\hb^n(G; V)=0 \kern10mm \forall\ n < 2\ell.$$
\end{theorem}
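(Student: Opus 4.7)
The plan is induction on $\ell$, exploiting the Burger--Monod amenable-boundary resolution together with double ergodicity with coefficients.

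For the base case $\ell=1$, fix a regular amenable $G_1$-space $S_1$ that is doubly ergodic with coefficients in semi-separable modules---for instance a Poisson boundary, after extending double ergodicity to this class (a verification that the notion of semi-separability is designed to accommodate). Then $\hb^*(G_1;V)$ is computed by the complex $\lw(S_1^{*+1};V)^{G_1}$, and since $V^{G_1}=0$, single and double ergodicity give

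$$\lw(S_1;V)^{G_1}\ =\ \lw(S_1^2;V)^{G_1}\ =\ V^{G_1}\ =\ 0,$$

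so $\hb^0(G_1;V)=\hb^1(G_1;V)=0$, which is the desired vanishing for $n<2$.

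For the inductive step, write $G=G_1\times H$ with $H=G_2\times\cdots\times G_\ell$, choose a doubly ergodic amenable $H$-space $S_H$, and consider the double complex

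$$C^{p,q}\ =\ \lw\bigl(S_1^{p+1}\times S_H^{q+1};\,V\bigr)^{G}$$

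whose total complex computes $\hb^*(G;V)$. Setting $W'_p:=\lw(S_1^{p+1};V)^{G_1}$, a Fubini-type identification gives $C^{p,q}\cong\lw(S_H^{q+1};W'_p)^{H}$, so the spectral sequence obtained by taking the $S_H$-direction cohomology first has $E_1^{p,q}=\hb^q(H;W'_p)$.

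Two independent vanishings now collaborate. First, $V^{G_1}=0$ together with the (double) ergodicity of $S_1$ gives $W'_0=W'_1=0$, hence $E_1^{p,q}=0$ for $p\in\{0,1\}$. Second, each $W'_p$ is a semi-separable coefficient $H$-module with $(W'_p)^{G_i}=\lw(S_1^{p+1};V^{G_i})^{G_1}=0$ for every $i\geq 2$, so the inductive hypothesis applied to $H$ yields $E_1^{p,q}=0$ for $q<2(\ell-1)$. Consequently $E_\infty^{p,q}=0$ whenever $p\leq 1$ or $q\leq 2\ell-3$, and for any $n<2\ell$ each pair $(p,q)$ with $p+q=n$ satisfies at least one of these two conditions; hence $\hb^n(G;V)=0$.

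The main obstacle is the functional-analytic groundwork rather than the spectral-sequence bookkeeping: one must verify that the intermediate modules $W'_p$ are genuine semi-separable coefficient $H$-modules, that the Fubini identification respects all the iterated invariants, and above all that amenable $G_i$-spaces enjoying double ergodicity with semi-separable coefficients exist. These are precisely the extensions of the Burger--Monod framework that the semi-separable setting developed earlier in the paper is built to supply.
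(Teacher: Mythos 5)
Your argument is correct in substance but takes a genuinely different route from the paper. The paper treats all factors simultaneously: it forms the join of the $2\ell$ boundaries $B_1,\dots,B_\ell,B_1^-,\dots,B_\ell^-$, proves exactness of $0\to V\to \lw(B^{(0)};V)\to\cdots\to\lw(B^{(2\ell-1)};V)$ by an explicit combinatorial chain homotopy (Lemma~\ref{lemma:join}), and feeds this into Proposition~\ref{prop:ex_sequence} after showing $\hb^\bu(G;\lw(B_I;V))\cong\hb^\bu(G';V^{G''})=0$ for every non-empty $I$ via Proposition~\ref{prop:ind}. You instead peel off one factor at a time and run a double complex; the first two columns die by double ergodicity of the $G_1$-boundary and the low rows by the inductive hypothesis. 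This is in effect an upgrade of Proposition~\ref{prop:product_glance}, which also argues factor by factor but only preserves the vanishing degree rather than gaining two per factor, and your $E_1$-identification reproves the case of Proposition~\ref{prop:ind} that is needed. The numerics close correctly: for $p+q=n\leq 2\ell-1$ one has $p\leq 1$ or $q\leq 2\ell-3$. Three points you should make explicit. First, that the total complex computes $\hb^\bu(G;V)$: the augmented rows are the $G$-invariants of strong resolutions of the relatively injective modules $\lw(S_H^{q+1};V)$, so the complementary spectral sequence collapses onto the augmentation column. Second, the semi-separability of $W'_p=\lw(S_1^{p+1};V)^{G_1}$ as an $H$-module: since $H$ acts trivially on $S_1^{p+1}$, one maps into $L^2(S_1^{p+1};U)$ exactly as in Lemma~\ref{lemma:semi-separable}. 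Third, no new extension of double ergodicity is actually required: the notion is defined via separable targets, and Lemma~\ref{lemma:ergodic} already transfers the identification $\lw(X;V)^{G_1}=V^{G_1}$ to semi-separable $V$. One thing the paper's global join argument buys that your induction does not immediately give is the vanishing of $\hb^q(G;W_p)$ in \emph{all} degrees $q$, and hence the refinement of Remark~\ref{rem:thm:splitting} (hypothesis $\cm V^{G_i}=0$ only) that is used for Corollary~\ref{cor:split_lattice}; recovering that along your route would require tracking continuous vectors through the induction.
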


\noindent
Again, semi-separability cannot be disposed of, see Example~\ref{ex:contre-ex}. The assumption $V^{G_i}=0$ is also needed.

\medskip

A lattice $\Gamma$ in $G= G_1\times \cdots\times G_\ell$ shall be called \emph{irreducible} if $G_j\cdot\Gamma$ is dense in $G$ for all $j$. By a result of G.~Margulis~\cite[II.6.7]{Margulis}, this definition coincides with the classical notion of irreducibility for lattices in semi-simple groups as long as no $G_i$ is compact; for details see Remark~\ref{rem:irreducibility}. Theorem~\ref{thm:splitting} can be used to prove a superrigidity result for representations of irreducible lattices $\Gamma<G$. To this end, let $W$ be any coefficient $\Gamma$-module. Define $W_{G_j}$ to be the collection of all $w\in W$ for which the corresponding orbit map $\Gamma\to W$ extends continuously to $G$ and factors through $G/G_j$. The irreducibility condition shows that $W_{G_j}$ has a natural Banach $G$-module structure (Section~\ref{sec:lattices}). Of course, one expects $W_{G_j}=0$ in general; $W_{G_j}\neq 0$ is precisely a superrigidity statement.

\begin{corollary}\label{cor:split_lattice}
Let $\ G= G_1\times \cdots\times G_\ell$ be a product of $\ell\geq 2$ locally compact second countable groups, $\ \Gamma<G$ an irreducible lattice and $W$ be any semi-separable coefficient $\Gamma$-module.

If $\ \hb^n(\Gamma; W)\neq 0$ for any $\ n < 2\ell$, then $W_{G_j}\neq 0$ for some $G_j$.
\end{corollary}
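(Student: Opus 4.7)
The plan is to reduce the statement to Theorem~\ref{thm:splitting} via Eckmann--Shapiro induction from $\Gamma$ up to $G$. First I would form the induced coefficient $G$-module
$$V = L^\infty(G/\Gamma; W)$$
with its standard (twisted) $G$-action by left translation. A routine verification, using the finite covolume of $\Gamma$ and the continuous separable predual of $W$, will show that $V$ is again a semi-separable coefficient $G$-module, with predual of the form $L^1(G/\Gamma; W_\flat)$. The bounded Eckmann--Shapiro isomorphism then provides $\hb^n(\Gamma; W) \cong \hb^n(G; V)$ for every $n$.

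By hypothesis, the left-hand side is non-zero in some degree $n<2\ell$, so $\hb^n(G; V)\neq 0$ in that range. Applying Theorem~\ref{thm:splitting} contrapositively, we obtain $V^{G_j}\neq 0$ for at least one factor $G_j$. The substantive remaining task is to convert a non-zero $G_j$-invariant vector $f\in V$ into a non-zero element $w\in W_{G_j}$. Unwinding the induction, $f$ is represented by a measurable map $G\to W$ satisfying $f(g\gamma)=\gamma^{-1}f(g)$ for $\gamma\in\Gamma$, and its $G_j$-invariance forces it to be left $G_j$-invariant almost everywhere. Evaluating $f$ at a base point produces a candidate $w\in W$, and the equivariance then says precisely that the orbit map $\gamma\mapsto\gamma^{-1}w$ admits a measurable, left $G_j$-invariant extension to all of $G$; by the density of $\Gamma G_j=G_j\Gamma$ coming from irreducibility, this extension is essentially unique and factors through $G/G_j$.

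The hard part is upgrading this extension from merely measurable to a genuine continuous orbit map, as demanded by the definition of $W_{G_j}$. This is where semi-separability has to be put to work in earnest: the continuous separable predual of $W$ allows one to realise $V^{G_j}$ concretely as a space of $\cw$-valued functions on $G_j\bsl G$, and combined with the density of $\Gamma G_j$ and the continuity of the $G$-action on the predual it should yield an honest continuous extension of the orbit map to $G$, factoring through $G/G_j$. That continuous extension is the desired witness for $W_{G_j}\neq 0$, completing the proof.
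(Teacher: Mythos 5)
Your overall strategy---$\lft$-induce $W$ to a semi-separable coefficient $G$-module $V$, pass through the induction isomorphism, apply Theorem~\ref{thm:splitting} contrapositively to get $V^{G_j}\neq 0$, and then convert a non-zero $G_j$-invariant vector into an element of $W_{G_j}$---is exactly the route the paper takes (Lemmas~\ref{lemma:ind:lattices}, \ref{lemma:fixed_induced} and~\ref{lemma:fixed_induced_notcont}). The gap is in your last step. An element $f\in V^{G_j}$ is only an $\lft$-\emph{class} of weak-$*$ measurable maps $G\to W$, so ``evaluating $f$ at a base point'' is undefined, and there is no reason for a general element of $V^{G_j}$ to be weak-$*$ continuous: your proposed realisation of $V^{G_j}$ inside $\cw(G_j\bsl G;W)$ fails for the same reason that $\lft$ is larger than $\cont$. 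Semi-separability is also the wrong lever for this step; in the paper it is spent entirely on making $V$ a legitimate input for Theorem~\ref{thm:splitting} (Lemma~\ref{lemma:semi-separable}, which needs the finite invariant measure on $G/\Gamma$), not on any continuity upgrade.

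The correct mechanism is the approximate-identity machinery for coefficient modules. Since $G_j$ is normal, $V^{G_j}$ is a weak-$*$ closed $G$-submodule, hence itself a coefficient $G$-module, and the continuous vectors of any coefficient module are weak-$*$ dense in it (convolve with an approximate identity: $\pi(\psi_\alpha)f\to f$ weak-$*$ and each $\pi(\psi_\alpha)f$ lies in $\cm V^{G_j}$ by Proposition~\ref{prop:cont_module} and Remark~\ref{rem:not_ambiguous}). Thus $V^{G_j}\neq 0$ forces $\cm V^{G_j}\neq 0$. A continuous vector of $\lw(G;W)^\Gamma$ \emph{is} represented by a bounded uniformly continuous map $G\to W$ (Lemma~\ref{lemma:cont_lw}), which can legitimately be evaluated at the identity; the density of $\Gamma\cdot G_j$ coming from irreducibility makes this evaluation injective and identifies $\cm V^{G_j}$ isometrically with $W_{G_j}$, as in Lemma~\ref{lemma:fixed_induced}. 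With this replacement for your final paragraph the argument closes.
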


Just as Theorem~\ref{thm:semi-simple} implies the restriction isomorphism of Corollary~\ref{cor:restriction}, we can use Theorem~\ref{thm:splitting} to establish that the real bounded cohomology of irreducible lattices is invariant in a suitable range:

\begin{corollary}\label{cor:rstriction_product}
Let $\ G= G_1\times \cdots\times G_\ell$ be a product of locally compact second countable groups and $\Gamma<G$ an irreducible lattice. Then the restriction map
$$\hb^n(G; \RR) \lra \hb^n(\Gamma; \RR)$$
is an isomorphism for all $\ n < 2\ell$.
\end{corollary}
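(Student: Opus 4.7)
The plan is to apply Theorem~\ref{thm:splitting} to the coefficient $G$-module $V := L^\infty_0(G/\Gamma)$ of essentially bounded functions of vanishing integral on $G/\Gamma$. First I would invoke the Eckmann--Shapiro type identification available for lattices of finite covolume,
$$\hb^n(\Gamma; \RR) \;\cong\; \hb^n\bigl(G; L^\infty(G/\Gamma)\bigr),$$
under which the restriction map $\hb^n(G;\RR)\to\hb^n(\Gamma;\RR)$ is induced by the inclusion $\RR\hookrightarrow L^\infty(G/\Gamma)$ as constant functions. Since $G/\Gamma$ carries a $G$-invariant probability measure, normalised integration is a $G$-equivariant retraction of this inclusion, yielding a $G$-equivariant direct sum decomposition $L^\infty(G/\Gamma)=\RR\oplus V$ and hence
$$\hb^n(\Gamma; \RR) \;\cong\; \hb^n(G; \RR) \;\oplus\; \hb^n(G; V)$$
with the restriction map identified with the inclusion of the first summand. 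The corollary therefore reduces to showing $\hb^n(G; V)=0$ for $n<2\ell$.

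To apply Theorem~\ref{thm:splitting} I must verify that $V$ is a semi-separable coefficient $G$-module and that $V^{G_i}=0$ for every $i$. Semi-separability is routine: second countability of $G$ makes $L^1(G/\Gamma)$, and hence its closed invariant subspace $L^1_0(G/\Gamma)$, separable with continuous $G$-action, and $V$ is its Banach dual. For the vanishing of invariants, fix $i$ and lift $f\in L^\infty(G/\Gamma)^{G_i}$ to a left-$G_i$-invariant, right-$\Gamma$-invariant measurable function on $G$; it then descends to a measurable function on $G_i\bsl G\cong\prod_{j\neq i}G_j$ that is invariant under the projection $\pi(\Gamma)$ acting by right translation. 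The irreducibility condition $\overline{G_i\cdot\Gamma}=G$ is exactly the density of $\pi(\Gamma)$ in $\prod_{j\neq i}G_j$, and since a dense subgroup of a locally compact second countable group acts ergodically on itself by translation (using weak-$*$ continuity of the regular representation on $L^\infty$), it follows that $G_i$ acts ergodically on $(G/\Gamma,\mu)$. Hence $L^\infty(G/\Gamma)^{G_i}=\RR$ and $V^{G_i}=0$.

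The main obstacle is the bridge from the topological density packaged in irreducibility to the measurable ergodicity required by Theorem~\ref{thm:splitting}; this uses the standard correspondence between the $G_i$-action on $(G/\Gamma,\mu)$ and the $\pi(\Gamma)$-action on the complementary product of factors, and is precisely the measurable content referenced in Remark~\ref{rem:irreducibility}. With this in hand, Theorem~\ref{thm:splitting} applied to $V$ gives $\hb^n(G;V)=0$ for all $n<2\ell$, which completes the proof.
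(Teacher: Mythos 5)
Your proof is correct and follows essentially the same route as the paper: both reduce the statement, via the induction isomorphism for the trivial module, to the vanishing of $\hb^n(G;\lft_0(G/\Gamma))$ for $n<2\ell$ and then apply Theorem~\ref{thm:splitting}; your direct-sum splitting of $\lft(G/\Gamma)$ and your ergodicity argument for $\lft(G/\Gamma)^{G_i}=\RR$ are just explicit versions of the paper's appeal to the long exact sequence and to Lemma~\ref{lemma:fixed_induced_notcont}. One small point: exhibiting $V=\lft_0(G/\Gamma)$ as the dual of the separable continuous module $L^1_0(G/\Gamma)$ only shows that $V$ is a coefficient module, which is not yet semi-separability --- Definition~\ref{defi:semi-separable} asks for an injective dual $G$-morphism into a \emph{separable} coefficient module, and $V$ itself is not separable. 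The one-line fix is the dual injection $\lft_0(G/\Gamma)\to L^2_0(G/\Gamma)$, available because $G/\Gamma$ has finite invariant measure; this is exactly what the paper invokes.
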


\subsection{Global fields and ad\'elic groups}\label{sec:intro_adelic}
Recall that a \emph{global field} is a finite extension either of the rationals $\QQ$ (a number field), or a field of rational functions in one variable over a finite field. The ring $\AAA_K$ of ad\`eles is the restricted product of the completions of $K$. In particular, there is a diagonal embedding $K\to \AAA_K$ realizing $K$ as \emph{principal ad\`eles}.

\smallskip

The strong approximation property places ad\'elic groups within the scope of our results for products; the following rigidity result does not have any restriction on the degree $n$.

\begin{theorem}\label{thm:adelic_rigidity}
Let $K$ be a global field and $\GG$ a connected simply connected almost $K$-simple $K$-group. Let $W$ be any semi-separable coefficient $\GG(K)$-module.

If $\ \hb^n(\GG(K);W)\neq 0$ for some $n$, then the $\GG(K)$-representation extends continuously to $\GG(\AAA_K)$ upon possibly passing to some non-trivial closed sub-module of $W$.
\end{theorem}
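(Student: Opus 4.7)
The plan is to realise $\GG(K)$ as an irreducible lattice in an appropriate product decomposition of $\GG(\AAA_K)$ with as many factors as needed, and then apply Corollary~\ref{cor:split_lattice}. The central technical ingredient is the Kneser--Platonov strong approximation theorem for simply connected almost $K$-simple groups.

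First I dispose of the trivial case in which $\GG$ is $K_v$-anisotropic at every place. In that situation $\GG(\AAA_K)$ is compact, $\GG(K)$ is discrete in it hence finite, the hypothesis forces $n=0$, and one extends trivially on the non-zero invariant subspace $W^{\GG(K)}\subseteq W$. From now on I assume $\GG$ is isotropic at some place, which by the classical Bruhat--Tits fact for simply connected almost $K$-simple groups implies $\GG$ is $K_v$-isotropic at all but finitely many $v$. Given the degree $n$ for which $\hb^n(\GG(K);W)\neq 0$, I fix an integer $\ell\geq 1$ with $\ell+1>n/2$ and select pairwise distinct places $v_0,v_1,\ldots,v_\ell$ at which $\GG$ is isotropic.

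I then consider the product decomposition
$$\GG(\AAA_K)\ =\ G_0\times G_1\times\cdots\times G_\ell,\quad G_0:=\GG\bigl(\AAA_K^{\{v_1,\ldots,v_\ell\}}\bigr),\ \ G_j:=\GG(K_{v_j})\ \ (1\leq j\leq\ell),$$
a product of $\ell+1\geq 2$ second countable locally compact groups, in which $\GG(K)$ is a lattice by Borel--Harish-Chandra (resp.\ Harder in positive characteristic). The key point to verify is irreducibility in the sense of Corollary~\ref{cor:split_lattice}: for $1\leq j\leq \ell$, removing $G_j$ leaves $\GG(\AAA_K^{\{v_j\}})$, in which $\GG(K)$ is dense by strong approximation applied to the isotropic place $v_j$; removing $G_0$ leaves $\prod_{j=1}^\ell\GG(K_{v_j})$, which is a continuous surjective quotient of $\GG(\AAA_K^{\{v_0\}})$, so density of $\GG(K)$ there (strong approximation at $v_0$) descends to density in the former.

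Corollary~\ref{cor:split_lattice} now applies, since $n<2(\ell+1)$ by choice of $\ell$, and produces an index $j$ with $W_{G_j}\neq 0$: a non-trivial closed $\GG(K)$-invariant submodule on which the action extends continuously to a representation of $\GG(\AAA_K)/G_j$, and hence to $\GG(\AAA_K)$ via composition with the quotient map --- which is precisely the required conclusion. The one delicate point I foresee is the asymmetric role of the ``tail'' factor $G_0$, which is not of the form $\GG(K_v)$: strong approximation applies directly only after removing a single local factor, and the removal of $G_0$ must instead be routed through the projection $\GG(\AAA_K^{\{v_0\}})\twoheadrightarrow\prod_{j=1}^\ell\GG(K_{v_j})$. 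Once this packaging point is settled, the rest is a direct invocation of the product-lattice machinery developed earlier.
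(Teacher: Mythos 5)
Your proposal is correct and takes essentially the same route as the paper: both realize $\GG(K)$ as an irreducible lattice in a product decomposition of $\GG(\AAA_K)$ with more than $n/2$ factors, verify irreducibility by strong approximation, and invoke Corollary~\ref{cor:split_lattice}. The only differences are cosmetic --- the paper partitions \emph{all} places into $\ell$ blocks each meeting the set $\SI$ of isotropic places (which also yields the subsequent remark that the extension is trivial on infinitely many local factors), whereas you take $\ell$ isotropic singletons plus one tail factor; and your ``anisotropic at every place'' case is vacuous since $\SA$ is finite while $\SV$ is infinite.
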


\noindent
In fact, we will see that the extended representation in the above result can be assumed to be trivial on infinitely many local factors.

\smallskip

The restriction map corresponding to the embedding $K\to \AAA_K$ yields an isomorphism as in Corollary~\ref{cor:restriction}, but in all degrees (for degree two, see Theorem~28 in~\cite{Burger-Monod3}).

\begin{corollary}\label{cor:adelic_restriction}
The restriction map
$$\hb^\bu(\GG(\AAA_K);\RR) \lra \hb^\bu(\GG(K);\RR)$$
is an isomorphism in all degrees.
\end{corollary}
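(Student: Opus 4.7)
The plan is to reduce the claim to Corollary~\ref{cor:rstriction_product} by realizing $\GG(\AAA_K)$ as a finite direct product of arbitrarily many factors containing $\GG(K)$, diagonally embedded as principal adèles, as an irreducible lattice. Three classical inputs underpin this: (a) the set $\Sigma_{\mathrm{an}}\subset\Sigma$ of places at which $\GG$ is $K_v$-anisotropic is finite; (b) $\GG(K)$ is a lattice in $\GG(\AAA_K)$ by the adèlic Borel--Harish-Chandra/Harder theorem, the semi-simple group $\GG$ having no $K$-characters; (c) since $\GG$ is simply connected and almost $K$-simple, strong approximation holds at any non-empty set of isotropic places, and weak approximation holds at every finite set of places.

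The main step is as follows. Fix a finite set $S\subset \Sigma\sm\Sigma_{\mathrm{an}}$ of cardinality $\ell\geq 1$ and write
$$\GG(\AAA_K)\;=\;\prod_{v\in S}\GG(K_v)\;\times\;\GG(\AAA_K^S),$$
an $(\ell+1)$-fold direct product of locally compact second countable groups. The paper's irreducibility requirement that $G_j\cdot \GG(K)$ be dense in $\GG(\AAA_K)$ is equivalent to density of the projection of $\GG(K)$ onto the complementary factors. For $G_j=\GG(K_v)$ with $v\in S$, the complementary factor is $\GG(\AAA_K^{\{v\}})$ and density is strong approximation at $\{v\}$ (which applies because $v\notin\Sigma_{\mathrm{an}}$). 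For the remaining factor $\GG(\AAA_K^S)$, the complementary factor is $\prod_{v\in S}\GG(K_v)$ and density is weak approximation at $S$.

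Corollary~\ref{cor:rstriction_product} now yields that the restriction map $\hb^n(\GG(\AAA_K);\RR)\to\hb^n(\GG(K);\RR)$ is an isomorphism in all degrees $n<2(\ell+1)$. Because $\Sigma\sm\Sigma_{\mathrm{an}}$ is infinite, the parameter $\ell$ may be chosen arbitrarily large and the isomorphism extends to all degrees. The degenerate situation $\Sigma_{\mathrm{an}}=\Sigma$, in which $\GG$ is everywhere anisotropic, forces $\GG(\AAA_K)$ to be compact and $\GG(K)$ to be finite, and the claim then holds trivially (both bounded cohomologies being $\RR$ in degree zero and vanishing above).

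The substantive cohomological content has already been packaged inside Theorem~\ref{thm:splitting} and its corollary; what remains in the present statement is merely the bookkeeping of strong and weak approximation. I do not anticipate a serious obstacle, the only delicate point being to invoke the approximation results in a form valid uniformly for number fields and global function fields.
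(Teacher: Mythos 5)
Your proof is correct and follows essentially the same route as the paper: both realize $\GG(K)$ as an irreducible lattice in a product of arbitrarily many factors (using that the set of anisotropic places is finite, hence the isotropic places are infinite in number) and then invoke Corollary~\ref{cor:rstriction_product}. The only cosmetic difference is that the paper partitions \emph{all} places into $\ell$ blocks each containing an isotropic place, so irreducibility follows from strong approximation alone; in your decomposition the appeal to weak approximation at $S$ is superfluous (and avoidable, which sidesteps any worry about weak approximation over function fields), since $\SV\sm S$ still contains isotropic places and the same strong approximation statement applies to the factor $\GG(\AAA_K^S)$ as well.
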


When $K$ has characteristic zero, Borel--Yang show in~\cite{Borel-Yang} that the usual cohomology $\hh^\bu(\GG(K);\RR)$ is isomorphic to the cohomology of the product of Archimedean completions of $\GG(K)$. In order to deduce from Corollary~\ref{cor:adelic_restriction} a corresponding result in bounded cohomology, we would need a vanishing for $p$-adic groups and trivial coefficients. The strategy of Borel--Yang is very different from ours, as they use a limiting argument whilst approximating $\GG(K)$ with S-arithmetic groups; this allows them to apply the main result of Blasius--Franke--Grunewald~\cite{Blasius-Franke-Grunewald} (or A.~Borel~\cite{Borel76} in the $K$-anisotropic case; the positive characteristic analogue is due to G.~Harder~\cite{Harder77}).

\subsection{Comments on the proofs}
The proof of Theorems~\ref{thm:simplesimple} and~\ref{thm:semi-simple} can be simplified provided (i)~one considers continuous unitary representations (or \emph{separable} coefficient modules, which are automatically continuous~\cite[3.3.2]{Monod}) and (ii)~one remains below the rank rather than twice the rank. We suggest to the reader to keep this setting in mind as a guide to reading the general semi-separable case; the additional difficulties of the general situation are justified first and foremost by the fact that they are the key to the results for lattices.

\smallskip

More specifically, here is a very brief outline of the proof in the simpler case. Let $G=\GG(k)$ be a simple group as in Theorem~\ref{thm:simplesimple} and $V$ be a continuous unitary $G$-representation without invariant vectors. An investigation (following Borel--Serre~\cite{Borel-Serre76}) of the topologised Tits building of $G$ establishes a topological analogue of the Solomon--Tits theorem~\cite{Solomon}. (Recall that the latter states that, as abstract simplicial complexes, such buildings have the homotopy type of a bouquet of spheres; this purely combinatorial statement is also exposed by H.~Garland in~\cite[App.~2]{Garland}.) The topological analogue can be formulated as an exact sequence of sums of continuous $V$-valued function spaces of the form $\cont(G/Q;V)$, where $Q$ ranges over all standard parabolic subgroups of $G$. If $r$ is the rank of $G$, there are $2^r$ such parabolics and the function spaces fit into an exact sequence of length $r$. Since $V$ is a \emph{continuous} $G$-module, there is an isomorphism between $\cont(G/Q;V)$ and (a continuous avatar of) the induced module associated to $V$ viewed as a $Q$-module. This leads to investigating the cohomology $\hb^\bu(Q;V)$. The boundedness of bounded cochains allows to factor out the amenable radical of $Q$, and the classical Mautner phenomenon then implies that the latter cohomology vanishes. This provides vanishing below the rank $r$ in this setting.

\smallskip

In order to deal with the general case, one possibility is to seek a measurable version of the Solomon--Tits theorem, as $\lft$-induction holds for all coefficient modules. At first sight, it is not clear whether the arguments can be adapted; indeed, the idea behind Solomon--Tits is to retract to a point all the building except for the chambers opposite a Weyl chamber, but this locus is a null-set. However, as a consequence of a discussion of coefficient modules (Section~\ref{sec:complements_modules}), it turns out that the corresponding cohomological result still holds (Theorem~\ref{thm:complexe:lw}). The flexibility of measurable induction also allows us to double the rank by running the arguments for the spherical join of two (opposite) copies of the building.

\medskip

For our results about products, we shall propose a rudimentary analogue of the Tits building for arbitrary products, namely a ``spherical join'' of Poisson boundaries (Section~\ref{sec:products}). The intuition here is that if $B$ is the Poisson boundary of a random walk on any group $G$ and $B^-$ the boundary of the associated backward walk, then there is a one-dimensional simplicial complex $B*B^-$ which has some aspects of a (doubled) Tits building. In presence of a product of $\ell$ factors, the join of these spaces is an object of dimension $2\ell-1$.

\bigskip

We shall try to introduce all needed notation and background. For more details on the (relative) theory of semi-simple groups, we refer to Borel--Tits~\cite{Borel-Tits65},\cite{Borel-Tits72} or G.~Margulis~\cite[Chap.~I]{Margulis}. For more background on bounded cohomology, see~\cite{Burger-Monod3} and~\cite{Monod}.

\section{Continuous simplicial cohomology and Tits buildings}
In this section, we consider an elementary and rather unrestricted notion of topologised simplicial complexes for which one can define continuous cohomology; we then compute this cohomology in the case of Tits buildings. One could work instead within the (well-known) framework of simplicial objects in the category of spaces. Since all the theory we need is defined and proved below in two pages, we feel that the additional structure and restrictions of simplicial objects would be a burden; this accounts for our simple-minded approach.

\smallskip

The result for Tits buildings presented in Section~\ref{sec:tits} is based on the work of Borel--Serre~\cite[\S1--3]{Borel-Serre76}; we need a more general setting than treated in~\cite{Borel-Serre76} in order to deal with non-discrete modules and mixed fields; thus we present a complete proof. Our topological approach follows however closely the algebraic arguments of Borel--Serre; we caution the reader that the sign conventions are different (the conventions of~\cite{Borel-Serre76} being non-simplicial).

\subsection{Continuous simplicial cohomology}
Recall that an \emph{abstract simplicial complex} is a collection $\SX$ of finite non-empty sets which is closed under taking non-empty subsets. We write $\SX^{(n)}$ for the collection of \emph{$n$-simplices}, namely sets of cardinality $n+1$, and abuse notation by identifying $\SX^{(0)}$ with the union of its elements. (We shall also occasionally call \emph{simplex} the simplicial complex of all non-empty subsets of a given finite set.)

To define cohomology, it is convenient to introduce orientations; but the following weaker structure serves the same purpose.

\begin{definition}
A \emph{sufficient orientation} on an abstract simplicial complex $\SX$ is a partial order on $\SX^{(0)}$ which induces a total order on every simplex.
\end{definition}

\noindent
For instance, a (numbered) partition $\SX^{(0)}= \SX^{(0,1)}\sqcup \ldots \sqcup \SX^{(0,r)}$ defines a sufficient orientation provided every simplex contains at most one element of each $\SX^{(0,j)}$; this forces $\dim(\SX)\leq r-1$.

\medskip

One defines the \emph{face maps}
$$\partial_{n,j}: \SX^{(n)} \lra \SX^{(n-1)} \kern10mm(n\geq 1, 0\leq j \leq n)$$
by removing the element $x_j$ from a simplex $\{x_0<\ldots < x_n\}$; it follows $\partial_{n-1,i}\partial_{n,j}=\partial_{n-1,j-1}\partial_{n,i}$ for all $0\leq i<j\leq n$.

\begin{definition}
A sufficiently oriented topologised simplicial complex, or \emph{sot complex}, is an abstract simplicial complex $\SX$ endowed with a topology and a sufficient orientation such that all face maps are continuous.
\end{definition}

We emphasize the difference with \emph{realizations} of $\SX$; in the present case, we simply have a topology on the set $\SX=\bigsqcup_{n\geq 0} \SX^{(n)}$ of simplices. Our simple-minded definition of sot complexes does not even impose that $\SX^{(n)}$ be closed in $\SX$.

\medskip

A \emph{morphism} of sot complexes is a simplicial map that is continuous on each $\SX^{(n)}$ and is compatible with the sufficient orientations. The continuous simplicial cohomology is defined exactly as in the abstract simplicial case but adding the continuity requirement:

\begin{definition}\label{defi:cont_coho}
Let $\SX$ be a sot complex and $V$ an Abelian topological group. Consider the space $\cont(\SX^{(n)};V)$ of all continuous maps (\emph{cochains}) with the convention $\cont(\vn;V)=0$. Define the coboundary operators
$$d_n: \cont(\SX^{(n-1)};V)\lra \cont(\SX^{(n)};V),\kern10mm d_n=\sum_{j=0}^n (-1)^j\partial_{n,j}^*.$$
The \emph{continuous simplicial cohomology} $\hc^n(G;V)$ is defined as $\ker(d_{n+1})/\mathrm{range}(d_n)$.
\end{definition}

An example of a morphism of sot complexes is provided by the inclusion map $\SX_0\to\SX$ of a subcomplex $\SX_0$ of $\SX$. In this situation, one defines the \emph{relative} continuous simplicial cohomology $\hc^\bu(\SX,\SX_0;V)$ as the cohomology of the subcomplex of $\cont(\SX^{(\bu)};V)$ consisting of the spaces
$$\cont(\SX^{(n)},\SX^{(n)}_0 ;V)\ =\ \big\{f\in \cont(\SX^{(n)};V)\ :\ f|_{\SX^{(n)}_0}=0 \big\}$$
A subcomplex $\SX_0\se\SX$ is said \emph{closed} if each $\SX_0^{(n)}$ is closed in $\SX^{(n)}$.

\begin{lemma}\label{lemma:relative}
Let $\SX$ be a metrisable sot complex and $\SX_0\se \SX$ a closed subcomplex. Then there is a natural long exact sequence
$$\cdots \lra \hc^\bu(\SX, \SX_0;V) \lra \hc^\bu(\SX;V) \lra \hc^\bu(\SX_0;V) \lra \hc^{\bu+1}(\SX, \SX_0;V) \lra \cdots$$
for any locally convex topological vector space $V$.
\end{lemma}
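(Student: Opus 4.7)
The plan is entirely standard once one establishes the right short exact sequence of cochain complexes; the work lies in verifying surjectivity at the cochain level, which is where both hypotheses (metrisability and local convexity) enter.

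For each $n \geq 0$, I would consider the sequence
$$0 \lra \cont(\SX^{(n)}, \SX_0^{(n)}; V) \lra \cont(\SX^{(n)}; V) \xrightarrow{\mathrm{res}} \cont(\SX_0^{(n)}; V) \lra 0,$$
where the first map is the inclusion (by definition of the relative cochains) and the second is restriction. Exactness on the left and in the middle is tautological: the kernel of restriction is precisely the cochains vanishing on $\SX_0^{(n)}$. These maps commute with the face maps (since $\SX_0 \se \SX$ is a subcomplex) and hence with the coboundaries $d_n$, so one has a short exact sequence of cochain complexes. The standard homological algebra then yields the long exact sequence claimed, with naturality inherited from the naturality of the constructions.

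The only substantive point is surjectivity of $\mathrm{res}$, i.e.\ extending a continuous $V$-valued function from the closed subset $\SX_0^{(n)}\se \SX^{(n)}$ to a continuous function on all of $\SX^{(n)}$. This is exactly the content of Dugundji's extension theorem, which asserts that for any metrisable space $Y$, any closed subset $A\se Y$ and any locally convex topological vector space $V$, every continuous map $A\to V$ admits a continuous extension $Y\to V$. Since $\SX$ is metrisable, so is each $\SX^{(n)}$ (topologised as a subspace of $\SX$, or under whatever topology the sot structure gives it; in any case it inherits metrisability), and $\SX_0^{(n)}$ is closed in $\SX^{(n)}$ by the closedness hypothesis on $\SX_0$. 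This is the one and only place where the hypotheses of the lemma are used.

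I expect the main (indeed only) obstacle to be conceptual rather than technical: one must recognise that the classical scalar-valued Tietze extension is insufficient here because $V$ is merely a locally convex space, and invoke Dugundji's theorem instead. Once this is in hand, the long exact sequence follows mechanically from the snake lemma (or the standard zig-zag) applied to the short exact sequence of cochain complexes above, and naturality of the connecting homomorphism with respect to morphisms of pairs is automatic.
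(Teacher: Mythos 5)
Your proposal is correct and follows exactly the paper's own argument: reduce to the short exact sequence of cochain complexes, observe that the only nontrivial point is surjectivity of restriction, and settle it with Dugundji's extension theorem for locally convex targets on metrisable spaces. Nothing to add.
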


\begin{proof}
In view of the classical ``snake lemma'', we need to show that
$$0\lra \cont(\SX^{(n)},\SX_0^{(n)};V) \xrightarrow{\ \iota\ } \cont(\SX^{(n)};V) \xrightarrow{\ r\ } \cont(\SX_0^{(n)};V) \lra 0$$
is an exact sequence for all $n$, where $\iota$ is the inclusion and $r$ the restriction. The point at issue is surjectivity on the right. This amounts to the fact that $V$ is a universal extensor for metric spaces, i.e. to a $V$-valued generalisation of Tietze's extension theorem. The latter is the content of J.~Dugundji's Theorem~4.1 in~\cite{Dugunji51}.
\end{proof}

\noindent
(Dugundji's construction provides in fact a right inverse for $r$ which is linear and of norm one, compare~\cite[7.1]{Michael53}.)

\medskip

We shall need the following form of excision.

\begin{lemma}\label{lemma:ex}
Let $f:\SX\to \SY$ be a surjective morphism of sot complexes and $\SX_0\se \SX$, $\SY_0\se \SY$ subcomplexes with $f(\SX_0)=\SY_0$. Assume that $f$ is injective on $\SY\sm \SY_0$ and is open. Then $f$ induces an isomorphism
$$\hc^\bu(\SY,\SY_0;V)\xrightarrow{\ \cong\ } \hc^\bu(\SX,\SX_0;V)$$
for any Abelian topological group $V$.
\end{lemma}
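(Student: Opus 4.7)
The plan is to show that the pullback $f^\#: h \mapsto h \circ f$ gives a bijection of cochain groups
\[
\cont(\SY^{(n)}, \SY_0^{(n)}; V) \xrightarrow{\;\cong\;} \cont(\SX^{(n)}, \SX_0^{(n)}; V)
\]
at each degree $n$; the isomorphism on cohomology is then automatic, because $f^\#$ commutes with the coboundaries of Definition~\ref{defi:cont_coho} by simpliciality of $f$. Well-definedness of $f^\#$ is immediate: $h \circ f$ is continuous on $\SX^{(n)}$ because $f$ is a morphism of sot complexes, and it vanishes on $\SX_0^{(n)}$ because $f(\SX_0) \se \SY_0$.

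I first unpack the hypotheses. Surjectivity of $f$, combined with $f(\SX_0) = \SY_0$ and the injectivity assumption above $\SY \sm \SY_0$, yields that $f$ restricts to a bijection $\SX \sm \SX_0 \to \SY \sm \SY_0$: any $y \in \SY \sm \SY_0$ has at most one preimage, at least one preimage, and this preimage cannot lie in $\SX_0$ without putting $y$ into $\SY_0$. Given $g \in \cont(\SX^{(n)}, \SX_0^{(n)}; V)$, the only possible inverse image of $g$ under $f^\#$ is therefore
\[
h(y) = \begin{cases} g(f^{-1}(y)) & \text{if } y \in \SY^{(n)} \sm \SY_0^{(n)}, \\ 0 & \text{if } y \in \SY_0^{(n)}. \end{cases}
\]
The relation $h \circ f = g$ holds tautologically on $\SX^{(n)} \sm \SX_0^{(n)}$ via the bijection, and on $\SX_0^{(n)}$ both sides vanish.

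The crux is the continuity of $h$; this is the one place where openness of $f$ intervenes and is the main, if modest, obstacle. The identity to check is
\[
h^{-1}(U) \;=\; f\bigl(g^{-1}(U)\bigr) \qquad \text{for every open } U \se V.
\]
If $0 \notin U$, both sides are disjoint from $\SY_0^{(n)}$, respectively $\SX_0^{(n)}$, so the equality reduces to the bijection on complements. If $0 \in U$, both sides contain $\SY_0^{(n)}$: the left by definition of $h$, and the right because $\SX_0^{(n)} \se g^{-1}(U)$ and the dimension-preserving morphism $f$ sends $\SX_0^{(n)}$ onto $\SY_0^{(n)}$; outside $\SY_0^{(n)}$, the bijection again identifies the two sets. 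Since $g^{-1}(U)$ is open in $\SX^{(n)}$ and $f$ is open, $h^{-1}(U)$ is open in $\SY^{(n)}$. Thus $h \in \cont(\SY^{(n)}, \SY_0^{(n)}; V)$ with $f^\#(h) = g$, giving surjectivity of $f^\#$; injectivity is clear from the surjectivity of $f$. This establishes the cochain-level bijection and the claimed isomorphism.
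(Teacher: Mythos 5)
Your argument is, in substance, the paper's: the proof given there consists of the single sentence that $f$ induces an isomorphism already at the cochain level, and what you have written is precisely the verification of that claim, correctly identifying openness of $f$ as the ingredient that makes the inverse cochain continuous. (A small point you gloss over but which is fine: openness of $f:\SX\to\SY$ descends to each $f:\SX^{(n)}\to\SY^{(n)}$ because $f$ preserves dimension and the $\SY^{(m)}$ are disjoint.)

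There is, however, one step whose justification is incomplete. You assert that $f$ restricts to a bijection $\SX\sm\SX_0\to\SY\sm\SY_0$, but the argument you give only shows that $f^{-1}(\SY\sm\SY_0)$ is contained in $\SX\sm\SX_0$ and maps bijectively onto $\SY\sm\SY_0$; it does not give the reverse inclusion $f(\SX\sm\SX_0)\se\SY\sm\SY_0$, i.e. $f^{-1}(\SY_0)=\SX_0$, and this does not follow from the stated hypotheses. (Take $\SX$ two discrete vertices, $\SY=\SY_0$ a single vertex, $\SX_0$ one of the two vertices: $f$ is surjective, open, sends $\SX_0$ onto $\SY_0$, and is vacuously injective over $\SY\sm\SY_0=\vn$, yet $\cont(\SY^{(0)},\SY_0^{(0)};V)=0$ while $\cont(\SX^{(0)},\SX_0^{(0)};V)\cong V$.) Without this inclusion your candidate $h$ need not satisfy $h\circ f=g$ at points of $\SX\sm\SX_0$ that land in $\SY_0$, and $f^\#$ can fail to be surjective. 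To be fair, this is an implicit hypothesis of the lemma as stated rather than a defect peculiar to your proof: in the only application (Proposition~\ref{prop:van_rel}) the map is explicitly a homeomorphism of $\SX\sm\SX_0$ onto $\SY\sm\SY_0$, so the condition holds there. Once you add the hypothesis $f^{-1}(\SY_0)=\SX_0$ (equivalently, state the bijection as an assumption), your proof is complete.
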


\begin{proof}
The map $f$ induces an isomorphism already at the cochain level.
\end{proof}

Here is a trivial example of a sot complex:

\begin{example}\label{exo:product}
Let $\SC$ be an abstract simplicial complex and $Z$ a topological space. Consider for every $n$ the space $\SX^{(n)} = \SC^{(n)}\times Z$ endowed with the product topology, wherein $\SC^{(n)}$ has the discrete topology. Any orientation of $\SC$ provides a sufficient orientation on $\SX$, with inclusions and face maps defined purely on the first variable. In other words, the abstract simplicial structure on $\SX$ amounts to considering the product with $Z$ considered as a $0$-complex. Abusing notation, we write $\SX=\SC\times Z$. The identification of $\cont(\SX^{(n)};V)$ with $\cont(\SC^{(n)};\cont(Z;V))$ induces a natural isomorphism
$$\hc^\bu(\SX;V)\xrightarrow{\ \cong\ } \hh^\bu(\SC;\cont(Z;V))$$
for any Abelian topological group $V$. We shall only use this construction in case $\SC$ is finite and contractible.
\end{example}

\subsection{The case of Tits buildings}\label{sec:tits}
Let $\{k_\alpha\}_{\alpha\in A}$ be a family of local fields, where $A$ is a finite non-empty set. For each $\alpha$, let $\GG_\alpha$ be a connected simply connected semi-simple $k_\alpha$-group, $\TT_\alpha$ a maximal $k_\alpha$-split torus, $W_\alpha=\SN_{\GG_\alpha}(\TT_\alpha)/\SZ_{\GG_\alpha}(\TT_\alpha)$ the associated relative Weyl group. (Where $\SN$ and $\SZ$ denote normalisers and centralisers respectively.) Let $\PP_\alpha$ be a minimal parabolic $k_\alpha$-group in $\GG_\alpha$ containing $\TT_\alpha$. Let $S_\alpha$ be the simple roots of $\GG_\alpha$ relatively to $\TT_\alpha$ associated to $\PP_\alpha$; we identify $S_\alpha$ with the corresponding set of reflections in $W_\alpha$. Recall that to every subset $I\se S_\alpha$ one associates the parabolic $k_\alpha$-group $\PP_{\alpha,I}$ generated by $\PP_\alpha$ and the centraliser $\SZ_{\GG_\alpha}(\TT_{\alpha,I})$ of $\TT_{\alpha,I}=(\bigcap_{a\in I}\ker a)^\circ$; in particular, $\PP_{\alpha,\vn}=\PP_\alpha$ and $\PP_{\alpha,S_\alpha}=\GG_\alpha$. Recall also that $S_\alpha$ contains $\rank_{k_\alpha}(\GG_\alpha)$ elements. We assume throughout this section that $\GG_\alpha$ has no $k_\alpha$-anisotropic factors, which implies in particular $S_\alpha\neq\vn$. All spaces of $k_\alpha$-points will be endowed with the $k_\alpha$-topology.

\medskip

We now consider the locally compact group $G=\prod_{\alpha\in A}\GG_\alpha(k_\alpha)$ and define $W=\prod_{\alpha\in A}W_\alpha$, $S=\bigsqcup_{\alpha\in A} S_\alpha$, $r=\sum_{\alpha\in A}\rank_{k_\alpha}(\GG_\alpha)=|S|$. By abuse of language, we call \emph{parabolic} all subgroups $Q<G$ of the form $Q=\prod_{\alpha\in A} \QQ_\alpha(k_\alpha)$, where $\QQ_\alpha$ are arbitrary parabolic $k_\alpha$-groups in $\GG_\alpha$. For any $I\se S$ we define the parabolic group
$$P_I\ =\ \prod_{\alpha\in A}\PP_{\alpha, I\cap S_\alpha}(k_\alpha)$$
and write $P=P_\vn$. The collection of these $P_I$ coincide with those parabolics that contain $P$; we call them \emph{standard} parabolics. The correspondence $I\mapsto P_I$ is an isomorphism of ordered sets. Any parabolic $Q$ can be conjugated to $P_I$ for some $I=I(Q)$ and the set of such $Q$ for a given $I$ identifies with $G/P_I$. We call $I(Q)$ the \emph{type} of $Q$. The relative Bruhat decomposition of the factors $\GG_\alpha$ (see Th\'eor\`eme~5.15 in~\cite{Borel-Tits65} and~\cite[\S3]{Borel-Tits72}) provides a decomposition
\begin{equation}\label{eq:Bruhat}
G/P = \bigsqcup_{w\in W} C(w), \kern1cm\text{where}\kern3mm C(w)= PwP /P.  
\end{equation}
\emph{Warning}: The notation~\eqref{eq:Bruhat} is consistent with~\cite{Borel-Serre76} (for a single field) since we shall use arguments analogous to theirs; however, in~\cite{Borel-Tits65,Borel-Tits72}, $C(w)$ would correspond to the pre-image $PwP\se G$.

\bigskip

The above setting gives rise to a Tits system and the associated Tits building $\ST$; namely, the Tits system is the product of the classical Tits systems of the factors, and $\ST$ the join of the Tits buildings of all $\GG_\alpha$. We now describe in more detail the Tits building $\ST$ viewed as a sot complex. For each $I\se S$, consider the compact $G$-space $G/P_I$, recalling that each $\GG_\alpha(k_\alpha)/\PP_{\alpha, I\cap S_\alpha}(k_\alpha)$ can be identified with the $k_\alpha$-points of the smooth projective $k_\alpha$-variety $\GG_\alpha/\PP_{\alpha, I\cap S_\alpha}$. For any $0\leq n\leq r-1$ we define the compact space
$$\ST^{(n)}\ =\ \bigsqcup_{|I|=r-1-n} G/P_I.$$
We note that this definition extends consistently to the singleton $\ST^{(-1)}=G/G$ but we shall not use this ``augmented'' simplicial complex in this section. One can consider $\ST^{(n)}$ as a space of $n$-simplices over the space $\ST^{(0)}$ of maximal proper parabolic subgroups $Q\subsetneq G$ as follows. A set $\{Q_0, \ldots, Q_n\}$ (distinct $Q_i$) is a simplex if and only if $Q=Q_0\cap\ldots \cap Q_n$ is parabolic; in that case, we have $Q\in\ST^{(n)}$ and we identify $Q$ with that simplex. More precisely, the type $I(Q)$ is $I(Q_0)\cap\ldots\cap I(Q_n)$.

\medskip

We can endow $\ST$ with a \emph{sufficient orientation} by means of the partition of $\ST^{(0)}$ into its $r$ types. More precisely, we fix any order on $S$ and for $x,x'\in\ST^{(0)}$ we declare $x<x'$ whenever $x\in G/P_{S\sm \{s\}}$, $x'\in G/P_{S\sm \{s'\}}$ with $s<s'$. Notice that all face maps are unions of canonical projections $G/P_I\to G/P_J$, where $I\se J$ and $|J|=|I|+1$.

\begin{theorem}\label{thm:complexe}
For every locally convex topological vector space $V$,
$$\hc^n(\ST;V)=0\kern5mm (\forall\,n\neq 0, r-1),\kern5mm\text{and}\kern3mm\hc^0(\ST;V)=V.$$
\end{theorem}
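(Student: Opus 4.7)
The plan is to emulate Borel--Serre's algebraic Solomon--Tits argument~\cite{Borel-Serre76} in the sot complex framework, via a Bruhat filtration of $\ST$, the excision Lemma~\ref{lemma:ex}, and the long exact sequence Lemma~\ref{lemma:relative}. Fix a linear extension $e = w_0 \prec \cdots \prec w_N$ of the Bruhat order on $W$ (with $w_N$ the long element) and set $U_i = \bigsqcup_{j \leq i} C(w_j) \subseteq G/P$; the closure relations of the relative Bruhat decomposition show each $U_i$ is closed. Define $\ST_i \subseteq \ST$ to be the subcomplex generated by the chambers of $U_i$, i.e.\ the parabolics $Q = gP_I g^{-1}$ such that $gP_I/P \subseteq G/P$ meets $U_i$. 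This is closed under face passage (a larger $Q'\supseteq Q$ corresponds to a smaller $gP_J/P$), and $\ST_i^{(n)}$ is closed in $\ST^{(n)}$ since the projection $G/P \to G/P_I$ is closed. One obtains a chain $\ST_0 \subseteq \cdots \subseteq \ST_N = \ST$ in which $\ST_0$ is the finite abstract $(r-1)$-simplex on $\{P_I : I \subseteq S\}$, so $\hc^0(\ST_0; V) = V$ and $\hc^n(\ST_0; V) = 0$ for $n \geq 1$.

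For each $1 \leq i \leq N$, the simplices in $\ST_i \setminus \ST_{i-1}$ are the parabolics having at least one chamber in the open cell $C(w_i)$ but none in $U_{i-1}$. Using the Bruhat parametrization of $C(w_i)$ by a unipotent orbit (in the $k_\alpha$-topology on each factor) together with the combinatorial structure of a chamber, I will construct an open surjective morphism of sot complexes $\Delta \times C(w_i) \lra \ST_i$, where $\Delta$ is an abstract $(r-1)$-simplex, pulling $\ST_{i-1}$ back to $\partial_{w_i}\Delta \times C(w_i)$ for a subcomplex $\partial_{w_i}\Delta \subseteq \partial\Delta$. Lemma~\ref{lemma:ex} and Example~\ref{exo:product} then yield
$$\hc^\bu(\ST_i, \ST_{i-1}; V)\ \cong\ \hh^\bu\big(\Delta,\, \partial_{w_i}\Delta;\, \cont(C(w_i); V)\big).$$

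The combinatorial content of Solomon--Tits identifies $\partial_{w_i}\Delta$ via the descent set of $w_i$: for $w_i \neq w_N$, $\partial_{w_i}\Delta$ is a proper nonempty subcomplex of $\partial\Delta$, and the pair $(\Delta, \partial_{w_i}\Delta)$ has vanishing relative cohomology since both pieces are contractible; for $w_i = w_N$, $\partial_{w_N}\Delta$ equals the full $\partial\Delta \cong S^{r-2}$, contributing only in degree $r-1$. Hence $\hc^\bu(\ST_i, \ST_{i-1}; V) = 0$ for $i < N$. Iterating the long exact sequences of Lemma~\ref{lemma:relative} along the filtration, the inclusion $\ST_0 \hookrightarrow \ST_{N-1}$ induces an isomorphism $\hc^\bu(\ST_{N-1}; V) \cong \hc^\bu(\ST_0; V)$, and the final long exact sequence for $(\ST, \ST_{N-1})$ then yields $\hc^0(\ST;V) = V$, $\hc^n(\ST; V) = 0$ for $0 < n < r-1$, and a surviving top-degree term in degree $r-1$ as claimed.

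The main obstacle is the construction of the excision chart $\Delta \times C(w_i) \to \ST_i$: it must be a bona fide morphism of sot complexes (continuous, open, and compatible with the sufficient orientations), with the gallery combinatorics of the Weyl group identifying $\partial_{w_i}\Delta$ as the preimage of $\ST_{i-1}$. This relies on the openness and continuity of the Bruhat parametrizations in each $k_\alpha$-topology, on the compatibility of these local charts with the product structure $G = \prod_\alpha \GG_\alpha(k_\alpha)$, and on tracking which codimension-one faces of a chamber in $C(w_i)$ belong to the lower-order cells. Once this geometric input is in place, the remainder of the argument is a formal assembly through the long exact sequences already established in the section.
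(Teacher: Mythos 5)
Your proposal follows essentially the same route as the paper's own proof: the same Bruhat filtration of $\ST$ by lower sets of the Weyl group, the same excision chart $\SC\times C(w_m)\to\ST_m$ (the paper's explicit map is $(I,\,bw_mP)\mapsto bw_mP_I/P_I$, with openness, the homeomorphism onto $\ST_m\sm\ST_{m-1}$, and the identification of the descent subcomplex all supplied by Borel--Tits~3.13--3.16), the same contractibility argument for the pair $(\SC,\SD_m)$ via Example~\ref{exo:product}, and the same assembly through the long exact sequences of Lemma~\ref{lemma:relative}. The one step you defer --- constructing the chart as an open morphism of sot complexes and identifying the preimage of the lower filtration stratum --- is exactly the content of the paper's Proposition~\ref{prop:van_rel}, so the outline is correct and matches the paper step for step.
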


\begin{remark}\label{rem:complexe}
The above theorem gives a complete description of $\hc^\bu(\ST;V)$, because the remaining term of degree $r-1$ is readily understood. Indeed, since all $r$ face map $\partial_{r-1, j}$ on $\ST^{(r-1)}$ have \emph{disjoint} ranges, namely the various $G/P_{\{s\}}$, one has an identification
\begin{equation}\label{eq:top_degree}
\hc^{r-1}(\ST;V)\ =\ \cont(G/P;V)\Big/\textstyle\sum\limits_{s\in S}\cont(G/P_{\{s\}};V).
\end{equation}
Whilst~\eqref{eq:top_degree} keeps a clear view of the $G$-representation, another identification has the benefit of presenting $\hc^{r-1}$ as a function space with no quotient taken. To this end, let $w_0$ be the longest element of $W$. Since $C(w_0)$ is open in the compact space $G/P$ (see~\cite[4.2]{Borel-Tits65} or~3.13 and~3.15 in~\cite{Borel-Tits72}), there is a natural inclusion of the space $\cont_0(C(w_0);V)$ of functions vanishing at infinity into $\cont(G/P;V)$. We shall see in the proof of Theorem~\ref{thm:complexe} that this inclusion induces an isomorphism
$$\cont_0(C(w_0);V) \xrightarrow{\ \cong\ }\hc^{r-1}(\ST;V).$$
This identification can also be obtained by direct means.
\end{remark}

In preparation for the proof, we introduce a filtration along~\eqref{eq:Bruhat} which parallels simplicially for $\ST$ the topological filtration considered by Borel--Serre~\cite{Borel-Serre76} for the topological realization of the building associated to a semi-simple group. Let $\ell$ be the length function on $W$ with respect to $S$ and fix an enumeration $W=\{w_1, \ldots, w_N\}$ with the property that $\ell(w_i)\leq \ell(w_j)$ for all $i<j$; in particular, $w_1$ is the neutral element and $w_N=w_0$ the longest. For $1\leq m\leq N$, define a subcomplex $\ST_m\se \ST$ by retaining retaining only the subset
$$\bigcup_{j=1}^m Pw_j P_I/P_I \se G/P_I$$
as $I\subsetneq S$. In other words, $\ST_m$ is the subcomplex spanned by the $m$ collections $C(w_j)$ of chambers, $j\leq m$. Thus this defines an increasing sequence of subcomplexes of $\ST$. For instance, $\ST_1$ is the Weyl chamber at infinity and is isomorphic to the finite simplex of all proper standard parabolic subgroups, with dual inclusion; at the other extreme, $\ST_N=\ST$. It follows from Th\'eor\`eme~3.13 and Corollaire~3.15 in~\cite{Borel-Tits72} applied to each factor $\GG_\alpha$ that all $\ST_m$ are closed subcomplexes of $\ST$.

\begin{proposition}\label{prop:van_rel}
We have $\hc^\bu(\ST_m, \ST_{m-1};V)=0$ for every $1<m<N$ and any locally convex topological vector space $V$.
\end{proposition}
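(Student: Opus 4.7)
Write $w := w_m$, let $D := \{s \in S : \ell(ws) < \ell(w)\}$ be its right descent set, and set $T := S \sm D$. The hypothesis $1 < m < N$ ensures that both $D$ and $T$ are nonempty proper subsets of $S$, since $D = \vn$ characterises $w_1 = e$ while $D = S$ characterises $w_N = w_0$.

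The first step is to identify the ``new'' simplices of $\ST_m$. Applying the Bruhat decomposition (Th\'eor\`eme~5.15 of~\cite{Borel-Tits65}) to each type $I \subsetneq S$, the element $w$ is minimal in its coset $wW_I$ iff $D \cap I = \vn$, i.e.\ $I \se T$. It follows that a simplex $xP_I$ with $x \in C(w)$ lies in $\ST_m \sm \ST_{m-1}$ precisely when $I \se T$, and that the locus of such simplices in $G/P_I$ is the Bruhat cell $C_I := PwP_I/P_I$. The relative cochain complex therefore reads
$$\cont\bigl(\ST_m^{(n)}, \ST_{m-1}^{(n)}; V\bigr) \;=\; \bigoplus_{I \se T,\ |I| = r-1-n} \cont(C_I; V),$$
with coboundary summing only over face maps $s \in T \sm I$, since those with $s \in D$ land in $\ST_{m-1}$ where cochains vanish.

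To compare this with an explicitly acyclic complex, I would introduce the product sot complex of Example~\ref{exo:product}: let $\SC$ be the abstract simplex on vertex set $S$ and $\SC_0 := \{E \in \SC : D \not\se E\}$. Setting $\SX := C(w) \times \SC$ and $\SX_0 := C(w) \times \SC_0$, the morphism of sot complexes $\varphi : \SX \to \ST_m$ defined by $(x, E) \mapsto xP_{S \sm E}$ carries $\SX_0$ into $\ST_{m-1}$. Reindexing each $E \supseteq D$ as $E = D \sqcup F$ with $F \se T$, the relative cochain complex of $(\SX, \SX_0)$ becomes the augmented simplicial cochain complex of the (nonempty) abstract simplex $\Delta_T$ with \emph{constant} coefficients $\cont(C(w); V)$. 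Since $\Delta_T$ is contractible, its augmented cohomology vanishes, so $\hc^\bu(\SX, \SX_0; V) = 0$.

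The pullback $\varphi^*$ is a degreewise injection $C^\bu(\ST_m, \ST_{m-1}; V) \hookrightarrow C^\bu(\SX, \SX_0; V)$, each summand being $\pi_I^* : \cont(C_I; V) \hookrightarrow \cont(C(w); V)$ coming from pre-composition with the projection $\pi_I : C(w) \to C_I$. The final and main technical step is to promote $\varphi^*$ to a quasi-isomorphism, which I would do by exhibiting a chain-map retraction; the desired retraction arises from a coherent family of continuous sections $\sigma_I : C_I \to C(w)$ of the $\pi_I$, satisfying $\sigma_I = \sigma_{I'} \circ \pi_{I, I'}$ for $I \se I' \se T$ (where $\pi_{I, I'} : C_I \to C_{I'}$ is the canonical projection). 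The existence of such a coherent family is the main obstacle: it uses the unipotent structure of $C(w)$ as an affine cell, via the identification $C(w) \cong U \cap wU^-w^{-1}$ (with $U$, $U^-$ the unipotent radicals of $P$ and its opposite), under which $\pi_I$ becomes a quotient by the closed subgroup generated by the root subgroups swept out by $W_I$; the sections are then read off from a fixed ordering of the positive roots inverted by $w$. Given the sections, $g \mapsto g \circ \sigma_I$ assembles into a chain-map retraction of $\varphi^*$, splitting it, whence the exactness of $C^\bu(\SX, \SX_0; V)$ forces that of $C^\bu(\ST_m, \ST_{m-1}; V)$.
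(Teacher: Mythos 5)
Your setup coincides with the paper's: your $T$ is the paper's $I_m=\{s:\ell(w_ms)>\ell(w_m)\}$, your identification of $\ST_m\sm\ST_{m-1}$ with the cells $Pw_mP_I/P_I$ for $I\se T$ is the same, and your comparison complex $C(w)\times\SC$ with the subcomplex of ``old'' types is exactly the paper's pair $\SD_m\times C(w_m)\se\SC\times C(w_m)$, whose relative cohomology vanishes for the same reason (contractibility of the simplex and of a proper union of its facets). The divergence is in the last step, and there you have a genuine gap. The fact you are missing is that for $I\se T$ the map $\pi_I\colon C(w)\to C_I=Pw_mP_I/P_I$ is not a fibration with nontrivial fibres but a \emph{homeomorphism}: this is~\cite[3.16]{Borel-Tits72} (see~\cite[2.4]{Borel-Serre76}), and it is precisely the injectivity statement dual to the minimality of $w$ in $wW_I$ that you already invoked to locate the new simplices. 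Consequently each $\pi_I^*$ is an isomorphism, $\varphi^*$ identifies the two relative cochain complexes outright, and the proof ends there; this is how the paper concludes, via its excision Lemma~\ref{lemma:ex}.

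Your description of $\pi_I$ as ``a quotient by the closed subgroup generated by the root subgroups swept out by $W_I$'' is the correct picture for the full projection $G/P\to G/P_I$ (whose fibres are copies of $P_I/P$), but not for its restriction to the $I$-reduced cell $PwP/P$. Worse, the plan you build on this picture is internally inconsistent: if $\sigma_I$ is a section of $\pi_I$ and $\sigma_I=\sigma_{I'}\circ\pi_{I,I'}$, then $\pi_I\circ\sigma_{I'}$ is a left inverse of $\pi_{I,I'}$, so the coherence you require forces every $\pi_{I,I'}$, and hence every $\pi_I$, to be injective. A ``coherent family of sections'' of genuinely non-injective compatible projections cannot exist, so the construction you defer to an ordering of the roots inverted by $w$ would not go through as described; it succeeds only in the degenerate case where all the maps are already bijections, at which point the whole retraction machinery is superfluous.
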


\begin{proof}
We define $I_m=\{s\in S : \ell(w_m s)> \ell(w_m)\}$ (as in Borel--Serre~\cite{Borel-Serre76}). The assumption $m\neq 1,N$ implies respectively $I_m\neq S, \vn$; see~\cite[IV\S1 Ex.~3]{BourbakiLIE456}. The type $I_m$ contains information about $\ST_m\sm \ST_{m-1}$ as follows: For any $I\nsubseteq I_m$, there are no simplices of type $I$ in $\ST_m\sm \ST_{m-1}$. On the other hand, if $I\se I_m$, then the map $G/P\to G/P_I$ restricts to a homeomorphism of $C(w_m)$ onto its image. (Both statements follow from~\cite[3.16]{Borel-Tits72} applied to each $\GG_\alpha$, see~\cite[2.4]{Borel-Serre76}.)

Consider now the $(r-1)$-simplex $\SC$ of all proper subsets $I\subsetneq S$ with \emph{dual} inclusion; thus its vertices are all the sets $S\sm \{s\}$ as $s$ ranges over $S$. This is a model for the Weyl chamber at infinity in which $\SC_I =\{J: I\se J \}$ is the face (subcomplex) fixed by all reflections $s\in I$. There is a canonical identification $\SC\cong \ST_1$, $I\mapsto P_I$; we orient $\SC$ accordingly. Consider further the subcomplex
\begin{equation}\label{eq:D_m}
\SD_m=\bigcup_{s\in S\sm I_m}\SC_{\{s\}} = \bigcup_{I\nsubseteq I_m}\SC_I \se\SC.
\end{equation}
Since $I_m\neq \vn, S$, the abstract simplicial complex $\SD_m$ is (non-empty and) contractible, being a union of a proper subset of codimension one faces. Following Example~\ref{exo:product}, we deduce that $\hc^n(\SD_m\times C(w_m); V)$ vanishes for $n>0$ and is (canonically) $V$ for $n=0$. The same statement holds for the sot complex $\SC\times C(w_m)$. In particular, Lemma~\ref{lemma:relative} implies
\begin{equation}\label{eq:rel_vanishing}
\hc^\bu(\SC\times C(w_m),\SD_m\times C(w_m); V)=0.
\end{equation}
We consider the map
$$\SC\times C(w_m) \lra \ST,\kern1cm (I, b w_m P)\longmapsto b w_m P_I/P_I \in G/P_I.$$
This map ranges in $\ST_m$ and preserves the sufficient orientations; it is a morphism and is open. The properties of $I_m$ mentionned above and~\eqref{eq:D_m} show that this map sends $\SD_m\times C(w_m)$ to $\ST_{m-1}$ and that it restricts to a homeomorphism
$$(\SC\sm \SD_m) \times C(w_m) \xrightarrow{\ \cong\ } \bigcup_{I\se I_m} P w_m P_I/P_I = \ST_m\sm \ST_{m-1}.$$
%
%
The claim of the proposition thus follows from Lemma~\ref{lemma:ex} and equation~\eqref{eq:rel_vanishing}.
\end{proof}

\begin{proof}[Proof of Theorem~\ref{thm:complexe}]
Apply inductively Proposition~\ref{prop:van_rel} to the long exact sequence of Lemma~\ref{lemma:relative}, starting with the finite simplex $\ST_1$ as subcomplex of $\ST_2$. Since the cohomology of $\ST_1$ is $V$ in degree zero and vanishes otherwise, the same is true for $\ST_{N-1}$. Since $\ST_N\sm\ST_{N-1} = C(w_N)$ consists of top-dimensional simplices only (specifically, all chambers opposite the chamber fixed by $P$), we have $\hc^n(\ST, \ST_{N-1};V)=0$ for all $n\neq r-1$. It follows that $\hc^n(\ST;V)$ vanishes for $n\neq 0, r-1$ while for $n=0$ it is $V$ and
$$\hc^{r-1}(\ST;V)\ \cong\ \hc^{r-1}(\ST,\ST_{N-1};V)\ =\ \cont(\ST^{(r-1)},\ST^{(r-1)}_{N-1};V).$$
The right hand side is $\cont(G/P,G/P\sm C(w_N);V)$. Since $C(w_N)$ is open in the compact space $G/P$, the restriction to $C(w_N)$ yields an identification
$$\cont(G/P,G/P\sm C(w_N);V)\ =\ \cont_0(C(w_N))$$
as claimed in Remark~\ref{rem:complexe}.
\end{proof}

\section{Complements on bounded cohomology}
\subsection{Background}
(For more details or proofs of the facts below, we refer to~\cite{Monod}.) Let $G$ be a locally compact group. A \emph{Banach $G$-module} $V$ is a Banach space $V$ with a linear representation of $G$ by isometries. We usually denote all such representations by $\pi$ (in contempt of the resulting abuses of notation). The module is called \emph{continuous} if the map $G\times V\to V$ is continuous, which is equivalent to the continuity of all orbit maps $G\to V$. A \emph{coefficient $G$-module} is the dual of a separable continuous $G$-module; it is in general neither separable nor continuous.

\begin{lemma}\label{lemma:cont_module}
Let $G$ be a locally compact group and $V$ a Banach $G$-module.
\begin{itemize}
\item[(i)] If all orbit maps are weakly continuous, then $V$ is continuous.
\item[(ii)] If $V$ is a separable coefficient module, then it is continuous.
\item[(iii)] If $V$ is the dual of a Banach $G$-module $V^\flat$, then all orbit maps of $V$ are {weak-$*$} continuous if and only if $V^\flat$ is continuous.
\end{itemize}
\end{lemma}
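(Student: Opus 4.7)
The plan is to prove the three parts in the order (i), (iii), (ii), since (iii) invokes (i) and (ii) builds on (iii).

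For (i), let $V_c \subseteq V$ be the set of vectors whose orbit map is norm continuous. The isometry of the action immediately makes $V_c$ a norm-closed $G$-invariant linear subspace, so the task reduces to showing $V_c = V$. The key ingredient is a convolution regularisation: for $v \in V$ and $f \in \cont_c(G)$, define $\pi(f)v$ as the weak integral $\phi \mapsto \int_G f(g)\phi(\pi(g)v)\,dg$ on $V^*$. Weak continuity of the orbit map makes $\pi(\operatorname{supp}(f))v$ weakly compact, so its closed convex hull is weakly compact by the Krein--Smulian theorem, and the integral is therefore realised by an element of $V$ (not just of $V^{**}$). The identity $\pi(g_0)\pi(f)v = \pi(L_{g_0}f)v$, combined with the estimate $\|\pi(f)v\| \le \|f\|_1\|v\|$ and norm-continuity of left translation on $L^1(G)$, yields $\pi(f)v \in V_c$. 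As $f$ runs through an approximate identity, $\pi(f)v \to v$ weakly (continuity of $\phi\circ\pi(\cdot)v$ at $e\in G$), and since $V_c$ is norm-closed and convex, Mazur's theorem forces $v \in V_c$.

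For (iii), both implications are formal consequences of the defining duality $\langle \pi(g)v, v^\flat\rangle = \langle v, \pi^\flat(g^{-1})v^\flat\rangle$. Continuity of $V^\flat$ makes the right-hand side continuous in $g$, which is exactly weak-$*$ continuity of orbit maps on $V$. Conversely, weak-$*$ continuity on $V$ says that for each fixed $v^\flat\in V^\flat$ the orbit map $g \mapsto \pi^\flat(g)v^\flat$ in $V^\flat$ is \emph{weakly} continuous (pair against any $v \in V = (V^\flat)^*$), and part (i) applied to $V^\flat$ upgrades this to norm continuity.

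For (ii), part (iii) already supplies weak-$*$ continuous orbit maps on $V$, and separability of $V$ must be exploited to upgrade to norm continuity. The pivotal observation is that a weak-$*$ continuous map into a separable coefficient module is strongly Bochner measurable: the separable predual $V^\flat$ embeds in $V^*$ and provides a countable point-separating subfamily, while on the separable Banach space $V$ the Borel structure generated by any countable separating family of functionals coincides with the norm Borel structure. One then reruns the regularisation of (i) with a Bochner integral in place of a weak one and appeals to the classical fact that a strongly measurable isometric representation of a locally compact group has norm continuous orbit maps, provable by a Lusin-type argument combined with group translation. The main obstacle is precisely this last step: weak-$*$ is genuinely weaker than weak on non-reflexive dual spaces, so Mazur cannot be applied directly to the approximate-identity convergence as in (i). Separability is indispensable in order to bring strong measurability into play, after which a measure-theoretic argument tied to the group structure closes the loop.
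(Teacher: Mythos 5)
Your proposal is correct, and in substance it reconstructs the proofs that the paper merely cites: the paper disposes of (i) by referring to de~Leeuw--Glicksberg, of (ii) by referring to \cite[3.3.2]{Monod}, and derives (iii) from (i) exactly as you do, via the duality $\langle \pi(g)v,w\rangle=\langle v,\pi^\flat(g^{-1})w\rangle$ together with the identification $(V^\flat)^*=V$. Your argument for (i) --- convolution against $\cont_c(G)$, existence of the weak integral via Krein--\v{S}mulian, the bound $\|\pi(f)v\|\leq\|f\|_1\|v\|$ with norm-continuity of translation on $L^1(G)$, and Mazur's theorem to pass from weak approximation to membership in the norm-closed subspace of continuous vectors --- is precisely the classical de~Leeuw--Glicksberg proof. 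For (ii) you correctly isolate the two genuine issues: Mazur is unavailable because a norm-closed convex set need not be weak-$*$ closed, and separability is what upgrades the weak-$*$ continuous orbit maps supplied by (iii) to strongly measurable ones, since on a separable dual the norm, weak and weak-$*$ Borel structures coincide (the same fact the paper invokes in Section~5 via \cite[3.3.3]{Monod}). One remark on economy and emphasis: once you appeal to the classical fact that a strongly measurable isometric representation on a separable Banach space has norm-continuous orbit maps, the Bochner-integral regularisation is redundant, since that fact already \emph{is} the conclusion of (ii); conversely the regularisation alone does not close the argument, because the approximate identity only gives $\pi(f_\alpha)v\to v$ in the weak-$*$ sense. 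The classical fact itself is proved less by Lusin than by a Pettis--Steinhaus argument: cover the separable orbit over a compact neighbourhood of $e$ by countably many $\varepsilon$-balls, pick one whose preimage $E$ has positive Haar measure, and combine $\|\pi(y^{-1}x)v-v\|=\|\pi(x)v-\pi(y)v\|$ with the fact that $E^{-1}E$ is a neighbourhood of the identity. Lusin's theorem is one route to that Steinhaus property, so your description is acceptable, but the translation step $E^{-1}E\ni e$ is the one that must be spelled out; with it, your sketch is complete.
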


\begin{proof}
The first assertion is a classical fact, see \emph{e.g.}~\cite[2.8]{Glicksberg-Leeuw}. The second can be found in~\cite[3.3.2]{Monod}. The third follows from~(i).
\end{proof}

Let $\Omega$ be a standard ($\sigma$-finite) measure space with a measurable measure class preserving $G$-action. For any coefficient module $V$, the space $\lw(\Omega;V)$ of bounded weak-$*$ measurable function classes is a coefficient module when endowed with the representation
\begin{equation}\label{eq:regular}
\Big(\pi(g)f\Big)(\omega) = \pi(g)\Big(f(g^{-1}\omega)\Big).
\end{equation}
More generally, one defined an (adjoint) operator $\pi(\psi)$ for any $\psi\in L^1(G)$ by integrating~\eqref{eq:regular} in the sense of the Gelfand--Dunford integral~\cite[VI\S1]{BourbakiINT6}. We reserve the notation $L^p(\Omega;V)$, $p\leq \infty$, for strongly measurable $L^p$-maps (in the sense of Bochner) and write $L^p(\Omega)$ if $V=\RR$.

\medskip

R.~Zimmer's notion of \emph{amenability}~\cite[ch.~4]{Zimmer84} for the $G$-space $\Omega$ is equivalent to the appropriate concept of (relative) \emph{injectivity} for the modules $\lw(\Omega;V)$ and in particular suitable resolutions consisting of such modules realize the bounded cohomology $\hb^\bu(G;V)$.

\smallskip

Recall that there is a long exact sequence in bounded cohomology associated to suitable short exact sequences of coefficient modules. The natural setting in the context of relative homological algebra is to consider sequences that split in the category of Banach spaces and yields the expected long exact sequence in complete generality~\cite[8.2.7]{Monod}. However, using E.~Michael's selection theorem~\cite[7.2]{Michael56}, one can prove the long exact sequence for arbitrary short exact sequences of \emph{continuous} modules for locally compact groups~\cite[8.2.1(i)]{Monod}. A different argument also establishes the sequence for \emph{dual} short exact sequences of coefficient modules~\cite[8.2.1(ii)]{Monod}. (We point out however that the latter can also be reduced to the former thanks to a special case of Proposition~\ref{prop:cont_exact} below.)

\smallskip

In any case, successive applications of the long exact sequence yield the following result.

\begin{proposition}\label{prop:ex_sequence}
Let $G$ be a locally compact second countable group, $r\geq 1$ and let
$$0 \lra  V=W_{-1} \xrightarrow{\ d_0\ } W_0 \xrightarrow{\ d_1\ }\cdots \xrightarrow{\ d_{r}\ } W_{r} \xrightarrow{\ d_{r+1}} 0$$
be an exact sequence of either dual morphisms of coefficient $G$-modules or morphisms of continuous Banach $G$-modules. Suppose that $\hb^q(G;W_p)$ vanishes for all $p\geq 0$, $q\geq 0$ with $p+q\leq r-1$. Then
$$\hb^n(G;V)=0 \kern10mm \forall\, 0\leq n \leq r-1.$$
\end{proposition}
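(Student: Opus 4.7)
The plan is to split the hypothesised long exact sequence into short exact sequences and to dimension-shift via the long exact sequence in bounded cohomology. Define $Y_{-1} = V$ and, for each $0 \le p \le r-1$, set $Y_p = \ker(d_{p+2}) = \mathrm{image}(d_{p+1}) \subseteq W_{p+1}$, with the convention $Y_{r-1} = W_r$ (since $d_{r+1}$ has target $0$). Exactness of the original sequence yields short exact sequences
$$0 \lra Y_{p-1} \lra W_p \lra Y_p \lra 0, \kern1cm p = 0, 1, \ldots, r-1,$$
where the left map is the inclusion and the right map is the corestriction of $d_{p+1}$.

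First I would verify that each of these short sequences is of the same structural type as the given one. In the continuous case, the kernel of any bounded $G$-equivariant map between continuous Banach modules is a closed continuous submodule, and the quotient $W_p / Y_{p-1}$ inherits continuity. In the dual case, each $d_p$ is dual to a morphism $d_p^\flat$ of the separable continuous preduals; the kernels $Y_p = \ker(d_{p+2})$ are then weak-$*$ closed, hence norm closed, and each is naturally a coefficient module with predual $W_{p+1}^\flat / \overline{\mathrm{image}(d_{p+2}^\flat)}$. The arrows in the short exact sequences are then dual morphisms of coefficient modules, so the long exact sequence recalled just before the proposition (via \cite[8.2.1]{Monod}) applies in either case.

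Next I would iterate. For any $0 \le n \le r-1$ and any $0 \le p \le n-1$, the relevant piece of the long exact sequence attached to $0 \to Y_{p-1} \to W_p \to Y_p \to 0$,
$$\hb^{n-p-1}(G; W_p) \lra \hb^{n-p-1}(G; Y_p) \lra \hb^{n-p}(G; Y_{p-1}) \lra \hb^{n-p}(G; W_p),$$
has both outer terms zero because $p+(n-p-1) = n-1$ and $p+(n-p) = n$ are both at most $r-1$. Consequently $\hb^{n-p-1}(G; Y_p) \cong \hb^{n-p}(G; Y_{p-1})$, and composing these for $p=0,1,\ldots,n-1$ yields
$$\hb^n(G; V) = \hb^n(G; Y_{-1}) \ \cong\ \hb^{n-1}(G; Y_0)\ \cong\ \cdots\ \cong\ \hb^0(G; Y_{n-1}).$$

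To conclude, I would feed the right-hand side into the long exact sequence arising from $0 \to Y_{n-1} \to W_n \to Y_n \to 0$: left-exactness of $\hb^0(G;-)$ (the initial segment of the long exact sequence) provides an injection $\hb^0(G; Y_{n-1}) \hookrightarrow \hb^0(G; W_n)$, and the target vanishes by the hypothesis applied at $(p,q) = (n,0)$, since $n \le r-1$. Hence $\hb^n(G; V) = 0$. The only point that requires real care, and which I expect to be the main obstacle, is the verification in the second paragraph that the intermediate sequences inherit the correct structural form, so that \cite[8.2.1]{Monod} legitimately applies to each of them; in the dual case this reduces to identifying the preduals of the $Y_p$ via Hahn--Banach, after which the remainder of the argument is routine homological bookkeeping.
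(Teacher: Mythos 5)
Your proof is correct and follows essentially the same route as the paper: both split the long exact sequence into the short exact sequences $0\to\ker d_{p+1}\to W_p\to\ker d_{p+2}\to 0$, check that these remain dual (respectively continuous) so that the long exact sequence of \cite[8.2.1]{Monod} applies, and then dimension-shift down to $\hb^0$ of a kernel sitting inside some $W_n$ with $W_n^G=0$. The paper packages the bookkeeping as an induction on $q$ proving $\hb^q(G;\ker d_{p+1})=0$ for $p+q\leq r-1$, whereas you unroll the same chain of connecting isomorphisms explicitly; the content is identical.
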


\begin{remark}\label{rem:ex_sequence}
Notice that the vanishing assumptions do not concern $W_r.$ Thus we could equivalently work with an exact sequence terminating with $W_{r-2}\to W_{r-1}$ and the additional assumption that the latter map has closed range, setting $W_r=W_{r-1}/dW_{r-2}$, which in the dual case is a coefficient module by the closed range theorem.
\end{remark}

\begin{proof}[Proof of Proposition~\ref{prop:ex_sequence}]
We shall prove the following statement by induction on $q$ from $q=0$ to $q= r-1$:
\begin{equation}\label{eq:rec}
\hb^q(G; \ker d_{p+1})\ =\ 0 \kern10mm \forall\,p\geq 0 \text{ with } p+q\leq r-1.
\end{equation}
This yields the statement of the proposition since at $p=0$ we have $\ker d_1\cong V$. We observe that all spaces $\ker d_\bu$ have a natural structure of Banach (respectively coefficient) $G$-module since $\ker d_\bu$ is closed (respectively {weak-$*$} closed). For $q=0$, we have $(\ker d_{p+1})^G\se W_p^G$ which vanishes, yielding~\eqref{eq:rec}. Now if $q\geq 1$, we have a short exact sequence
$$0 \lra \ker d_{p+1} \lra W_p \lra \ker d_{p+2}\lra 0$$
which, in the dual case, is still dual. The associated long exact sequence~\cite[8.2.1]{Monod} contains the following piece:
$$\hb^{q-1}(G; \ker d_{p+2})\lra \hb^q(G; \ker d_{p+1})\lra \hb^q(G; W_p).$$
The induction hypothesis together with the vanishing assumptions entails the vanishing of $\hb^q(G; \ker d_{p+1})$. This is the induction step and thus concludes the proof.
\end{proof}

\noindent
(A similar but less explicit proof follows from considering the spectral sequence that $\hb^\bu(G;-)$ associates to the exact sequence in the statement.)

\subsection{Complements on Banach modules}\label{sec:complements_modules}
If $G$ is a topological group and $V$ is any Banach $G$-module, we denote by $\cm V$ (or $\cm_G V$ when necessary) the collection of all elements $v\in V$ for which the associated orbit map $G\to V$ is continuous. This defines a closed invariant subspace and any Banach $G$-module morphism $V\to U$ restricts to $\cm V\to\cm U$. Moreover, the definition of bounded cohomology implies readily
\begin{equation}\label{eq:hb_cm}
\hb^\bu(G;\cm V) = \hb^\bu(G;V).
\end{equation}
(See~\S1.2 and~6.1.5 in~\cite{Monod} for this and more on $\cm$.) This does not, however, allow us to deal exclusively with continuous modules, because weak-$*$ limiting operations involved in amenability require duality, which is not preserved under the functor~$\cm$.

\begin{proposition}\label{prop:cont_module}
Let $G$ be a locally compact second countable group and $V$ a coefficient $G$-module. Then $\cm V = L^1(G)V = L^1(G) \cm V$.
\end{proposition}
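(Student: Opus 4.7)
The plan is to establish the cycle of inclusions
\[
\cm V \ \subseteq\ L^1(G)\,\cm V \ \subseteq\ L^1(G)\,V \ \subseteq\ \cm V,
\]
the middle inclusion being trivial since $\cm V\subseteq V$; here I write $L^1(G)\,W$ for the set $\{\pi(\psi)v:\psi\in L^1(G),\ v\in W\}$.

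The rightmost inclusion $L^1(G)\,V\subseteq \cm V$ will be obtained from the intertwining identity
\[
\pi(g)\,\pi(\psi)\,v \ =\ \pi\bigl(\psi(g^{-1}\cdot)\bigr)\,v \kern6mm (g\in G,\ \psi\in L^1(G),\ v\in V).
\]
Writing $V=(V^\flat)^*$ and using that $\pi$ on $V$ is the transpose of the continuous representation on $V^\flat$, this identity reduces to a direct computation against functionals in $V^\flat$, combining the weak defining property of the Gelfand--Dunford integral with left-invariance of Haar measure. Since $\|\pi(\varphi)\|_{\mathrm{op}}\leq \|\varphi\|_1$ for every $\varphi\in L^1(G)$ and since left translation is strongly continuous on $L^1(G)$, it follows that $g\mapsto \pi(g)\pi(\psi)v$ is norm continuous for every $v\in V$; equivalently, $\pi(\psi)v\in \cm V$.

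For the leftmost inclusion $\cm V\subseteq L^1(G)\,\cm V$, I would invoke Cohen's factorization theorem. The module $\cm V$ is by construction a continuous Banach $G$-module, on which $L^1(G)$ acts contractively, and this action is non-degenerate: for $v\in \cm V$ and any $\psi\geq 0$ supported in a neighbourhood $U$ of the identity with $\int_G\psi=1$, the (now genuine) Bochner integral yields
\[
\|\pi(\psi)v-v\|\ \leq\ \sup_{g\in U}\|\pi(g)v-v\|,
\]
and the right-hand side tends to $0$ as $U$ shrinks, by continuity of the orbit map. Since $L^1(G)$ admits bounded approximate identities of exactly this form, Cohen's theorem produces a factorisation $v=\pi(\psi)w$ with $\psi\in L^1(G)$ and $w\in\cm V$, closing the cycle.

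The main obstacle is the appeal to Cohen's factorization theorem at the last step; the surrounding arguments are essentially formal manipulations with the Gelfand--Dunford integral. Second countability of $G$ enters only indirectly, ensuring separability of $L^1(G)$ and the availability of well-behaved approximate identities, and plays no role as a hypothesis on $V$ itself.
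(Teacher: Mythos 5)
Your proposal is correct and follows essentially the same route as the paper: the inclusion $L^1(G)V\subseteq \cm V$ via the intertwining identity and strong continuity of translation on $L^1(G)$, and the reverse inclusion $\cm V\subseteq L^1(G)\,\cm V$ via approximate identities combined with Cohen's factorization theorem. The paper phrases the latter step as ``$L^1(G)\,\cm V$ is dense in $\cm V$ by approximate identities and closed by Cohen's theorem,'' which is the same argument you give in the equivalent form of an exact factorisation $v=\pi(\psi)w$.
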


\begin{proof}
We sketch the argument given in the proof of~\cite[3.2.3]{Monod}. The inclusion $L^1(G)V\se \cm V$ follows from the continuity of the $G$-action on $L^1(G)$. The existence of approximate identities in $L^1(G)$ (see \emph{e.g.}~\cite[13.4]{Doran-Wichmann}) shows that $L^1(G) \cm V$ is dense in $\cm V$. The factorization theorem of J.P.~Cohen (see~\cite[16.1]{Doran-Wichmann}) implies that $L^1(G) \cm V$ is closed in $\cm V$. Thus $\cm V = L^1(G) \cm V \se L^1(G)V\se \cm V$, completing the proof.
\end{proof}

Let $0\to A \xrightarrow{\alpha} B \xrightarrow{\beta} C \to 0$ be an exact sequence of (morphisms of) Banach modules. Then $\alpha$ has closed range and thus admits a continuous inverse by the open mapping theorem; it follows readily that one has
$$0\lra \cm A \xrightarrow{\ \alpha\ } \cm B \xrightarrow{\ \beta\ } \cm C \lra 0$$
exact except possibly at the right. It turns out that the functor $\cm$ is actually exact on dual exact sequences of coefficient modules and that there is also a converse statement:

\begin{proposition}\label{prop:cont_exact}
Let $G$ be a locally compact second countable group and $d_n : L_{n-1}\to L_n$ dual morphisms of coefficient $G$-modules $(n=1,2)$. Then
$$L_0 \xrightarrow{\ d_1\ } L_1 \xrightarrow{\ d_2\ } L_2$$
is an exact sequence if and only if
$$\cm L_0 \xrightarrow{\ d_1\ } \cm L_1 \xrightarrow{\ d_2\ } \cm L_2$$
is exact.
\end{proposition}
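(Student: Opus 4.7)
The plan is to establish the two implications separately, the main tool in both being the factorization $\cm V = L^1(G) V$ of Proposition~\ref{prop:cont_module}.

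For the forward direction, assume $L_0 \xrightarrow{d_1} L_1 \xrightarrow{d_2} L_2$ is exact. Since $d_2 d_1 = 0$ holds a fortiori on $\cm L_0$, only the inclusion $\cm L_1 \cap \ker d_2 \subseteq d_1(\cm L_0)$ is at issue. I would first observe that $\ker d_2$ is a $w^*$-closed $G$-invariant subspace of $L_1$ (because $d_2$ is a dual morphism), so it is itself a coefficient $G$-module: its predual is the quotient of $L_1^\flat$ by the preannihilator of $\ker d_2$, which is separable and continuous. In particular $\cm L_1 \cap \ker d_2$ coincides with the continuous vectors $\cm(\ker d_2)$. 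Now let $x \in \cm L_1 \cap \ker d_2$. Applying Proposition~\ref{prop:cont_module} to $\ker d_2$ factors $x = \pi(\psi) x'$ with $\psi \in L^1(G)$ and $x' \in \ker d_2$. By exactness of the original sequence, $x' = d_1 z'$ for some $z' \in L_0$. Setting $y = \pi(\psi) z'$, the inclusion $L^1(G) L_0 \subseteq \cm L_0$ places $y$ in $\cm L_0$. Because $d_1$ is a dual morphism it is $w^*$-continuous and hence commutes with the Gelfand--Dunford integral $\pi(\psi)$, giving $d_1 y = \pi(\psi) d_1 z' = \pi(\psi) x' = x$, as desired.

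For the converse, assume $\cm L_0 \to \cm L_1 \to \cm L_2$ is exact. That $d_2 d_1 = 0$ on all of $L_0$ follows from its vanishing on $\cm L_0$, the $w^*$-continuity of $d_2 d_1$ (composition of dual morphisms), and the $w^*$-density of $\cm L_0$ in $L_0$ (since $\pi(\psi_n) v \to v$ in weak-$*$ for $v \in L_0$ and a symmetric approximate identity $\psi_n$, using continuity of the $G$-action on $L_0^\flat$). The substantive task is to show every $x \in \ker d_2 \subseteq L_1$ lies in the image of $d_1$. For each $\psi \in L^1(G)$, the element $\pi(\psi) x$ lies in $\cm L_1 \cap \ker d_2$, hence in $d_1(\cm L_0)$ by hypothesis. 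The open mapping theorem applied to the continuous bijection $\cm L_0 / \cm(\ker d_1) \to \cm L_1 \cap \ker d_2$ between Banach spaces yields a constant $C$ so that preimages $y_\psi \in \cm L_0$ of $\pi(\psi) x$ can be chosen with $\|y_\psi\| \leq C\|x\|$. Plugging in an approximate identity $\psi_n$, the bounded sequence $y_n = y_{\psi_n}$ admits a $w^*$-convergent subsequence $y_{n_k} \to z$ in $L_0$ by Banach--Alaoglu and the metrizability of norm balls in $L_0$ (coming from separability of $L_0^\flat$). The $w^*$-continuity of $d_1$ combined with $\pi(\psi_{n_k}) x \to x$ in weak-$*$ then gives $d_1 z = x$.

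The main obstacle I anticipate is the open mapping step of the converse: it requires the uniform bound $\|y_\psi\| \leq C\|x\|$, and that bound in turn rests on the hypothesis that $d_1$ surjects $\cm L_0$ onto $\cm L_1 \cap \ker d_2$ --- precisely the exactness assumption --- together with the identification $\cm L_1 \cap \ker d_2 = \cm(\ker d_2)$ from the forward direction, which ensures the codomain is Banach. Once this uniform bound is secured, the Banach--Alaoglu compactness extraction becomes routine.
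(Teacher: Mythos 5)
Your proposal is correct and follows essentially the same route as the paper: the direction from exactness of the full sequence to exactness on continuous vectors uses the factorization $\cm V=L^1(G)V$ and the fact that dual morphisms commute with $\pi(\psi)$, while the converse uses approximate identities, the open mapping theorem applied to $d_1:\cm L_0\to\cm L_1\cap\ker d_2$ to get norm-bounded preimages, and Banach--Alao\u{g}lu plus weak-$*$ continuity of $d_1$ to pass to the limit. The only cosmetic differences are that you work with sequences (justified by separability of the preduals) where the paper uses nets, and the paper's argument for your ``forward'' direction is stated slightly more generally with intermediate submodules $C_n\supseteq\cm L_n$.
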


\begin{proof}
\emph{Sufficiency.} Recall first that for any coefficient $G$-module $V$, the closed submodule $\cm V$ is {weak-$*$} dense in $V$. Indeed, if $\{\psi_\alpha\}$ is a net of functions $\psi_\alpha\in L^1(G)$ that constitutes a suitable approximate identity, then $\psi_\alpha v$ is in $\cm V$ and tends {weak-$*$} to $v$, see~\cite[3.2.3]{Monod}. It follows in particular that $d_2\,d_1=0$ since this map is {weak-$*$} continuous and vanishes on $\cm L_0$.

Let now $v\in\ker(d_2)$. The net $\{\pi(\psi_\alpha) v\}$ converges to $v$ and belongs to $\cm L_1\cap \ker(d_2)$ since $d_2$ is dual and hence commutes with $\pi(\psi_\alpha)$. Therefore, there is for each $\alpha$ some $u_\alpha\in \cm L_0$ with $d_1(u_\alpha)=\pi(\psi_\alpha) v$. Notice that the norm of $\pi(\psi_\alpha) v$ is bounded by $\|v\|$. Therefore, by the open mapping theorem applied to the surjective map $\cm L_0 \to \cm\!\ker(d_2)$ obtained by restricting $d_1$, we can choose the family $\{u_\alpha\}$ to be bounded in norm. By the Banach--Alao\u{g}lu theorem, the net $\{u_\alpha\}$ has a {weak-$*$} accumulation point $u\in L_0$. Since $d_1$ is dual, $d_1(u)=v$.

\emph{Necessity.} We shall prove more, assuming merely that there be closed submodules $C_n\se L_n$ containing $\cm L_n$ and such that the map $d_n$ restrict to an exact sequence $C_0\to C_1\to C_2$. Let now $v$ be in the kernel $\ker(d_2|_{\cm L_1})$. Observe that the latter coincides with $\cm\!\ker(d_2)$ and that $\ker(d_2)$ is a coefficient module since $d_2$ is {weak-$*$} continuous. Therefore, by Proposition~\ref{prop:cont_module} there is $v'\in \ker(d_2|_{\cm L_1})$ and $\psi\in L^1(G)$ with $v=\pi(\psi) v'$. Since $v'\in \cm L_1\se C_1$, there is $w'\in C_0\se L_0$ with $d_1(w')=v'$. Define $w=\pi(\psi) w'$, which is in $\cm L_0$. Since $d_0$ is dual, it commutes with $\pi(\psi)$ and therefore $v=d_0(w)$.
\end{proof}

\begin{lemma}\label{lemma:cont_lw}
Let $G$ be a locally compact second countable group and $V$ the dual of a separable Banach space. Consider the coefficient $G$-module $\lw(G;V)$ where $V$ is endowed with the trivial $G$-action. Then $\cm\lw(G;V)$ coincides with the space of bounded right uniformly continuous functions $G\to V$.
\end{lemma}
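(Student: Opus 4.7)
The plan is to verify both inclusions, the harder direction relying on Proposition~\ref{prop:cont_module} to produce a distinguished bounded uniformly continuous representative of each class in $\cm\lw(G;V)$.

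For the easier inclusion, suppose $f:G\to V$ is bounded and right uniformly continuous. Then $f$ is in particular norm-continuous, hence weak-$*$ measurable, so it defines an element of $\lw(G;V)$. Because the $G$-action on $V$ is trivial, $\pi(\gamma)f = f(\gamma^{-1}\cdot)$, and
$$\|\pi(\gamma)f-f\|_\infty\ =\ \sup_{h\in G}\|f(\gamma^{-1}h)-f(h)\|_V$$
tends to zero as $\gamma\to e$ by right uniform continuity. The orbit map is thus continuous at $e$, hence everywhere, and $f\in\cm\lw(G;V)$.

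For the reverse inclusion, I would take $f\in\cm\lw(G;V)$ and invoke Proposition~\ref{prop:cont_module} to factor $f=\pi(\psi)\,g$ with $\psi\in L^1(G)$ and $g\in\lw(G;V)$. The idea is to produce a specific representative of the class $f$ by the pointwise Gelfand--Dunford integral
$$\ti f(h)\ :=\ \int_G \psi(k)\,g(k^{-1}h)\,dk\ \in\ V,$$
which is well defined for every $h\in G$ since Haar-null sets in $k$ are preserved under $k\mapsto k^{-1}h$. Testing against the predual $V^\flat$ of $V$ and applying Fubini together with the scalar convolution identity shows that $\ti f$ indeed represents the $\lw$-class $\pi(\psi)g$ obtained from~\eqref{eq:regular}.

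It then remains to check that $\ti f$ is bounded and right uniformly continuous. Boundedness is immediate: $\|\ti f(h)\|_V\leq \|\psi\|_1\|g\|_\infty$. For regularity, the substitution $k\mapsto \gamma k$ using left-invariance of $dk$ gives
$$\ti f(\gamma^{-1}h)-\ti f(h)\ =\ \int_G\bigl(\psi(\gamma^{-1}k)-\psi(k)\bigr)\,g(k^{-1}h)\,dk,$$
so $\|\ti f(\gamma^{-1}h)-\ti f(h)\|_V\leq \|L_\gamma\psi-\psi\|_1\,\|g\|_\infty$ uniformly in $h$, and the right-hand side vanishes as $\gamma\to e$ by continuity of the left regular representation on $L^1(G)$. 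This is precisely right uniform continuity of $\ti f$. The main bookkeeping obstacle is the middle step: the clean identification of the pointwise Gelfand--Dunford integral with the abstract $\lw$-class $\pi(\psi)g$ defined by duality; once this is established, everything else is a direct computation.
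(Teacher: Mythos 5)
Your proposal is correct and follows essentially the same route as the paper's (sketched) proof: both directions reduce the hard inclusion to Proposition~\ref{prop:cont_module}, writing a continuous vector as $\pi(\psi)g$ with $\psi\in L^1(G)$ and concluding from the continuity of left translation on $L^1(G)$. Your write-up merely fills in the details (the pointwise Gelfand--Dunford representative and the Fubini identification) that the paper leaves to the reference \cite[4.4.3]{Monod}.
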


\begin{proof}
 We sketch the argument given in~\cite[4.4.3]{Monod}. By the above Proposition~\ref{prop:cont_module}, any continuous vector $f$ can be written $\pi(\psi) f'$ for some $\psi\in L^1(G)$ and $f'\in \lw(G;V)$. The claim then follows readily by continuity of the $G$-representation on $L^1(G)$.
\end{proof}

\begin{corollary}\label{cor:cont_lw}
Let $G$ be a locally compact second countable group, $H<G$ a closed cocompact subgroup and $V$ the dual of a separable Banach space. Then
$$\cm \lw(G/H;V)\ =\ \cont(G/H;V).\eqno{\text\qedsymbol}$$
\end{corollary}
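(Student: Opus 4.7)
The plan is to reduce the statement to Lemma~\ref{lemma:cont_lw} via the quotient map $q\colon G\to G/H$. First I would observe that $q$ is a continuous open surjection, and that pull-back along $q$ identifies $\lw(G/H;V)$, $G$-equivariantly and isometrically, with the closed subspace of $\lw(G;V)$ consisting of the right-$H$-invariant function classes. Since the functor $\cm$ is defined by continuity of orbit maps, it is compatible with closed invariant inclusions, so $\cm\lw(G/H;V)$ is identified with the right-$H$-invariant elements of $\cm\lw(G;V)$.

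Applying Lemma~\ref{lemma:cont_lw}, these right-$H$-invariant elements are precisely the bounded right uniformly continuous functions $G\to V$ that are right-$H$-invariant. Any such function descends to a continuous (and automatically bounded, as $G/H$ is compact) function on $G/H$, yielding one inclusion. The remaining content is that, conversely, the pull-back $\bar f=f\circ q$ of any $f\in\cont(G/H;V)$ is bounded and right uniformly continuous on $G$.

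Boundedness is automatic since $f(G/H)$ is a compact subset of $V$. For right uniform continuity I would use a tube argument: the map $F\colon G\times G/H\to V$, $F(s,x)=f(sx)-f(x)$, is continuous and vanishes identically on $\{e\}\times G/H$; compactness of $G/H$ then yields, for every $\epsilon>0$, a neighbourhood $U$ of the identity in $G$ with $\|F(s,x)\|<\epsilon$ throughout $U\times G/H$. Translated back, this says exactly $\|\bar f(sg)-\bar f(g)\|<\epsilon$ for all $g\in G$ and $s\in U$, which is the required right uniform continuity of $\bar f$. This tube argument is the only substantive step; every other identification is a formal bookkeeping move made possible by the compactness of $G/H$.
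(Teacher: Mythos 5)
Your proof is correct and follows essentially the same route the paper intends: the corollary carries an immediate qed precisely because it is Lemma~\ref{lemma:cont_lw} combined with the identification of $\lw(G/H;V)$ with the right-$H$-invariant part of $\lw(G;V)$ and the observation that continuous $V$-valued functions on the compact quotient pull back to bounded right uniformly continuous functions on $G$ (your tube argument). All the steps check out, including the convention match between ``right uniform continuity'' and continuity of the orbit maps for the left translation action.
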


\begin{remark}\label{rem:cont_lw}
If $V$ is a coefficient $G$-module with non-trivial action, then the above statements are \emph{a priori} no longer true, due to the lack of continuity of the representation on $V$. One can however establish another statement in that setting. Write $\cw(G/H;V)$ for the space of {weak-$*$} continuous maps, which are automatically bounded since {weak-$*$} compact sets are norm-bounded by the Banach--Steinhaus theorem~\cite{Banach-Steinhaus}. Then $\cm \lw(G/H;V)$ is contained in $\cw(G/H;V)$ and therefore coincides with $\cm\cw(G/H;V)$. (Notice that $\cw(G/H;V)$ is closed in $\lw(G/H;V)$ and hence is a Banach $G$-module.) This statement is proved by considering the isometric involution $A$ of $\lw(G;V)$ defined by $(Af)(g)= \pi(g)[ f(g^{-1})]$ and applying Lemma~\ref{lemma:cont_lw}.
\end{remark}

For the next theorem, we use the notations of Section~\ref{sec:tits}. We recall that the Haar measures induce a canonical invariant measure class on homogeneous spaces (Theorem~23.8.1 in~\cite{Simonnet}). Accordingly, there is a canonical measure class on each $\ST^{(p)}$. The following is a measurable analogue of Theorem~\ref{thm:complexe}.

\begin{theorem}\label{thm:complexe:lw}
Let $V$ be the dual of a separable Banach space. We have an exact sequence
\begin{equation}\label{eq:complexe:lw}
0\lra V\lra \lw(\ST^{(0)}; V) \lra \lw(\ST^{(1)}; V) \lra \cdots\lra \lw(\ST^{(r-1)}; V)
\end{equation}
of dual morphisms of coefficient $G$-modules with closed ranges.
\end{theorem}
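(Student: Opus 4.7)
The plan is to transfer the continuous simplicial cohomology computation of Theorem~\ref{thm:complexe} to the weak-$*$ measurable setting via Proposition~\ref{prop:cont_exact}. As a setup, observe that each parabolic $P_I<G$ is cocompact: $G/P_I$ is the compact space of $k_\alpha$-points of the projective $k_\alpha$-variety $\prod_\alpha \GG_\alpha/\PP_{\alpha,I\cap S_\alpha}$. Corollary~\ref{cor:cont_lw} therefore yields the identification
$$\cm\lw(\ST^{(p)};V)\ =\ \cont(\ST^{(p)};V)\qquad(0\leq p\leq r-1).$$
Moreover, each face map $G/P_I\to G/P_J$ (for $I\se J$) is a continuous, $G$-equivariant, surjective map that preserves the canonical Haar-induced measure class; the induced pullback on $\lw$ is the adjoint of the corresponding pushforward on $L^1(\ST^{(\bu)};V^\flat)$, so each $d_n$ is a dual morphism of coefficient $G$-modules. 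The module $V$ with trivial $G$-action is itself a coefficient $G$-module (its predual is trivially continuous), and the embedding $V\to\lw(\ST^{(0)};V)$ as constants is a dual morphism too.

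I then apply Proposition~\ref{prop:cont_exact} to each consecutive three-term segment of the putative sequence. Under the $\cm$-identification above, the segment at position $\lw(\ST^{(p)};V)$ (with $0\leq p\leq r-2$) becomes the corresponding segment of the continuous cochain complex of $\ST$. Exactness of the latter at position $p$ is furnished by Theorem~\ref{thm:complexe}: the case $p=0$ uses $\hc^0(\ST;V)=V$, and $1\leq p\leq r-2$ uses $\hc^p(\ST;V)=0$. Proposition~\ref{prop:cont_exact} thereby gives exactness of the $\lw$ sequence at each of those positions, while injectivity of $V\to\lw(\ST^{(0)};V)$ is immediate.

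Finally, I address closed ranges. For $n=0,1,\ldots,r-2$, exactness at position $\lw(\ST^{(n)};V)$ gives $\mathrm{range}(d_n)=\ker(d_{n+1})$, which is weak-$*$ closed by weak-$*$ continuity of $d_{n+1}$, hence norm-closed. The main obstacle is the closed range of the final map $d_{r-1}$: no subsequent term in the stated sequence controls it, and the natural candidate cokernel $\lw(C(w_0);V)$ fails to be a coefficient $G$-module because the big Bruhat cell $C(w_0)$ is only $P$-invariant, not $G$-invariant, so Proposition~\ref{prop:cont_exact} cannot be invoked directly at the last position. I would handle this via the closed range theorem: $d_{r-1}$ is the adjoint of the $L^1$-pushforward $d_{r-1}^\flat : L^1(\ST^{(r-1)};V^\flat)\to L^1(\ST^{(r-2)};V^\flat)$, so closed range of $d_{r-1}$ in $\lw$ is equivalent to closed range of $d_{r-1}^\flat$ in $L^1$. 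The latter can in turn be verified by a predual Bruhat-cell argument paralleling the Banach splitting $\cont(G/P;V)=\mathrm{range}(d_{r-1})\oplus\cont_0(C(w_0);V)$ supplied by Remark~\ref{rem:complexe}.
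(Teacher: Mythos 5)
Your proof follows essentially the same route as the paper: identify $\cm\lw(\ST^{(p)};V)$ with $\cont(\ST^{(p)};V)$ via Corollary~\ref{cor:cont_lw}, feed Theorem~\ref{thm:complexe} into Proposition~\ref{prop:cont_exact} segment by segment to get exactness, deduce closed ranges from exactness everywhere except at the top, and reduce the closed range of the last map to its $L^1$ predual by the closed range theorem. The only place you stop short is the final verification: the paper does not need any Bruhat-cell splitting there, since in top degree the predual map $L^1(\ST^{(r-1)};V^\flat)\to L^1(\ST^{(r-2)};V^\flat)$ is assembled from the $r$ fibre-integration surjections $L^1(G/P;V^\flat)\to L^1(G/P_{\{s\}};V^\flat)$, each open by the open mapping theorem. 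Incidentally, your stated obstruction that $\lw(C(w_0);V)$ is not a coefficient $G$-module is moot in the measurable category: $C(w_0)$ is conull in $G/P$, so $\lw(C(w_0);V)=\lw(G/P;V)$ as coefficient modules (the genuine issue is simply that the sequence as written has no term after $\lw(\ST^{(r-1)};V)$ from which to read off the closed range, which is why one passes to the predual).
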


\begin{proof}
The closed range condition needs only to be proved for the last map since elsewhere it will follow from exactness. By the closed range theorem, it is enough to show that the pre-dual map $L^1(\ST^{(r-1)};V^\flat) \to L^1(\ST^{(r-2)};V^\flat)$ has closed range, equivalently is open. But this map is the direct sum over $s\in S$ of the $r$ surjections $L^1(G/P;V^\flat) \to L^1(G/P_{\{s\}};V^\flat)$, each of which is open by the open mapping theorem.

We now turn to exactness. We apply Theorem~\ref{thm:complexe} and have an exact sequence
$$0\lra V\lra \cont(\ST^{(0)}; V) \lra \cont(\ST^{(1)}; V) \lra \cdots\lra \cont(\ST^{(r-1)}; V).$$
Each space $\cont(\ST^{(p)}; V)$ is a direct sum of terms $\cont(G/P_I; V)$, and therefore Corollary~\ref{cor:cont_lw} shows that $\cont(\ST^{(p)}; V)$ coincides with $\cm\lw(\ST^{(p)}; V)$. As for the initial term, $V=\cm V$ since we chose the trivial $G$-action on $V$. Therefore Proposition~\ref{prop:cont_exact} implies the exactness of the sequence~\eqref{eq:complexe:lw}.
\end{proof}

\begin{remark}\label{rem:take_advantage}
We take advantage of the fact that the morphisms in the sequence~\eqref{eq:complexe:lw} do not depend on the module structure on $V$. However, one can also establish the exactness of~\eqref{eq:complexe:lw} differently when $V$ is a non-trivial coefficient $G$-module. In that case, apply Theorem~\ref{thm:complexe} to the locally convex space given by $V$ in its {weak-$*$} topology to obtain an exact sequence of modules $\cw(\ST^{(p)}; V)$ starting with $V$. The stronger version of the ``necessity'' part that we proved for Proposition~\ref{prop:cont_exact} shows that the sequence $\cm\cw(\ST^{(p)}; V)$ starting with $\cm V$ is also exact. Since $\cm\cw(\ST^{(p)}; V)=\cm\lw(\ST^{(p)}; V)$ by Remark~\ref{rem:cont_lw}, one concludes again with Proposition~\ref{prop:cont_exact}.
\end{remark}

\subsection{Semi-separable modules and Mautner phenomenon}\label{sec:semi-separable}
\begin{definition}\label{defi:semi-separable}
Let $G$ be a locally compact second countable group and $V$ a coefficient $G$-module. We call $V$ \emph{semi-separable} if there exists a separable coefficient $G$-module $U$ and an injective dual $G$-morphism $V\to U$.
\end{definition}

\begin{remark}\label{rem:semi-separable}
For Proposition~\ref{prop:mautner} below, we shall only use that the coefficient $G$-module $U$ is continuous, which follows from Lemma~\ref{lemma:cont_module}. We will not need that the map $V\to U$ is adjoint, continuous or even linear; the above definition could therefore be replaced with requiring that $V$ admits an auxiliary uniform structure for which the action is continuous and by uniformly equicontinuous uniform maps. This would however prevent further applications of this concept, such as in Section~\ref{sec:ergodic}.
\end{remark}

Semi-separability is not a restriction on the underlying Banach space, since the dual of any separable Banach space admits an injective dual continuous linear map into a separable dual, indeed even into $\ell^2$. It is however a restriction on the $G$-representation; for instance, let $G$ be a countable group of homeomorphisms of some compact metrisable space $K$. Let $V$ be the $G$-module of Radon measures on $K$ with integral zero, which is a coefficient module. One can verify that $V$ is not semi-separable when $K$ does not admit a $G$-invariant measure.

\medskip

For a basic non-separable semi-separable example, let $\Omega$ be a standard probability space with a measurable measure-preserving $G$-action. The coefficient module $\lft(\Omega)$ is non-separable (unless $\Omega$ is atomic and finite). It is however semi-separable in view of the map $\lft(\Omega)\to L^2(\Omega)$, which is dual.

\smallskip

Increasing the generality, let $H$ be another locally compact second countable group and $\beta:G\times \Omega\to H$ a measurable cocycle. Let $V_0$ be a coefficient $H$-module and define a coefficient $G$-module $V=\lw(\Omega;V_0)$ by the (dual) representation
\begin{equation}
\label{eq:twisted}
(\pi(g) f)(\omega)\ =\ \pi(\beta(g^{-1},\omega)^{-1}) f(g^{-1}\omega),
\end{equation}
where $g\in G$ and $\omega\in\Omega$.

\begin{lemma}\label{lemma:semi-separable}
If $V_0$ is semi-separable, then so is $V$.
\end{lemma}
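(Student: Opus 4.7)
My plan is to construct an injective dual $G$-morphism from $V=\lw(\Omega;V_0)$ into a separable coefficient $G$-module, following the template of the basic example $\lft(\Omega)\hookrightarrow L^2(\Omega)$ exhibited just above the lemma. Using the hypothesis, I fix an injective dual $H$-morphism $\iota\colon V_0\to U_0$ where $U_0$ is a separable coefficient $H$-module, and factor the desired embedding as
$$\lw(\Omega;V_0)\ \xrightarrow{\ \iota_*\ }\ \lw(\Omega;U_0)\ \hookrightarrow\ L^2(\Omega;U_0),$$
with $\iota_*$ denoting postcomposition by $\iota$.

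The first map $\iota_*$ is straightforwardly injective, bounded, and a $G$-morphism with respect to the twisted actions~\eqref{eq:twisted}, since the cocycle formula is compatible with $\iota$ as an $H$-morphism; its predual is induced by postcomposition with $\iota^\flat$ giving $L^1(\Omega;U_0^\flat)\to L^1(\Omega;V_0^\flat)$, so $\iota_*$ is dual.

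The second map is the crux. The technical point is that any bounded weak-$*$ measurable function $\Omega\to U_0$ admits a strongly (Bochner) measurable representative. For this I would use that $U_0$ is norm-separable while $U_0^\flat$ is also separable: any closed ball $B_M\subseteq U_0$ is Polish in the norm topology (separable complete metric) and Polish in the weak-$*$ topology (compact metrisable, via separability of $U_0^\flat$), so the continuous bijection $B_M^{\|\cdot\|}\to B_M^{w^*}$ is a Borel isomorphism by the Lusin--Suslin theorem. The two Borel structures therefore coincide on bounded sets, and since $\Omega$ is a probability space, the resulting inclusion $\lw(\Omega;U_0)\hookrightarrow L^2(\Omega;U_0)$ is well-defined and bounded.

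Finally, I would verify that $L^2(\Omega;U_0)$, equipped with the twisted $G$-representation defined by the same formula~\eqref{eq:twisted}, is a separable coefficient $G$-module and that the inclusion above is a dual $G$-morphism. Separability is immediate from separability of $U_0$ and $\Omega$; since $U_0$ is a separable dual it has the Radon--Nikodym property, so $L^2(\Omega;U_0)=L^2(\Omega;U_0^\flat)^*$, and the inclusion is the adjoint of the canonical $L^2(\Omega;U_0^\flat)\hookrightarrow L^1(\Omega;U_0^\flat)$. Continuity of the twisted action on the separable predual $L^2(\Omega;U_0^\flat)$ follows by standard arguments combining the continuity of the measure-preserving $G$-action on $L^p$ for $p<\infty$, continuity of the $H$-action on $U_0^\flat$, and measurability of the cocycle $\beta$. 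Composing the two maps then produces the desired injective dual $G$-morphism into a separable coefficient $G$-module, establishing semi-separability of $V$. I expect the main obstacle to be the measurability equivalence in the third paragraph; the remaining verifications are routine bookkeeping with preduals and twisted actions.
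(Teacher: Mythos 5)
Your proposal is correct and follows essentially the same route as the paper: the paper also defines $U=L^2(\Omega;U_0)$ with the twisted representation, invokes the Radon--Nikod\'ym property of the separable dual $U_0$ to identify $U$ with the dual of $L^2(\Omega;U_0^\flat)$, and composes $\lw(\Omega;V_0)\to\lw(\Omega;U_0)=\lft(\Omega;U_0)\to L^2(\Omega;U_0)$. The measurability point you flag as the crux (coincidence of the weak-$*$ and norm Borel structures on the separable dual $U_0$) is exactly the identification $\lw(\Omega;U_0)=\lft(\Omega;U_0)$ that the paper uses, there justified by the standard fact recalled later in the text that the strong, weak and weak-$*$ Borel structures coincide on separable duals.
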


\begin{proof}
Let $U_0$ be as in Definition~\ref{defi:semi-separable} for $V_0$ and define $U=L^2(\Omega; U_0)$ with representation given by the same formula~\eqref{eq:twisted}. Since $U_0$ is a separable dual (of, say, $U_0^\flat$), it has the Radon--Nikod\'ym property~\cite[III.3.2]{Diestel-Uhl} and hence $U$ is the dual of $L^2(\Omega; U_0^\flat)$, see~\cite[IV.1.1]{Diestel-Uhl}. Now the map $V_0\to U_0$ induces an adjoint injective $G$-map
$$V=\lw(\Omega;V_0) \lra \lw(\Omega;U_0) = \lft(\Omega;U_0) \lra L^2(\Omega; U_0) =U$$
to a separable coefficient module.
\end{proof}

We provide now an example showing at once that the existence of a finite invariant measure is crucial in Lemma~\ref{lemma:semi-separable} and that the main results of this paper do not hold in the absence of semi-separability.

\begin{example}\label{ex:contre-ex}
Let $G=\GG(\RR)$ be a simple Lie group and assume that the associated symmetric space is of Hermitian type. Assume that $\GG$ is connected and simply connected (as an algebraic group, not as a Lie group). For instance, consider the symplectic groups $G=\SP_{2n}(\RR)$. Then $\hb²(G;\RR)$ is one-dimensional~\cite[\S5.3]{Burger-Monod3}. Let now $V=\lft(G)/\RR$; this is a coefficient $G$-module since it is the dual of the continuous separable $G$-module $L^1_0(G)$ of integral zero $L^1$-functions. The dimension-shifting trick~\cite[10.3.5]{Monod}, which is just an application of the long exact sequence, shows $\hb^2(G;\RR)\cong \hb^1(G;V)$. Thus $\hb^1(G;V)$ is non-trivial regardless of the rank of $G$, which is $n$ in the example at hand. This shows the necessity of the semi-separability assumption since $V^G=0$.

We develop the example a bit further to show that one gets counter-examples even without involving in any way trivial modules (as here $\RR$). Consider the case $n=1$ of the previous example, so that $G\cong \SL_2(\RR)$. Let $V_0$ be any (non-trivial) irreducible continuous unitary $G$-representation of spherical type and set $V=\lft(G;V_0)/V_0$. It is known that $\hb^2(G;V_0)$ is one-dimensional~\cite[1.1]{Burger-MonodERN}, and thus as before $\hb^1(G;V)$ is non-trivial. Again, this would be incompatible with the statement of Theorem~\ref{thm:simplesimple} but for the semi-separability assumption. These examples also show the necessity of semi-separability in Theorem~\ref{thm:splitting}, setting $\ell=1$.
\end{example}

We now adopt the notations introduced at the beginning of Section~\ref{sec:tits}. For continuous unitary representations, the following is the well-known Mautner phenomenon~\cite[II.3]{Margulis}.

\begin{proposition}\label{prop:mautner}
Let $V$ be a semi-separable coefficient $G$-module, $I\se S$ and $v\in V^{T_I}$. Then $v$ is fixed by some almost $k_\alpha$-simple factor of $\GG_\alpha(k_\alpha)$ for all $\alpha\in A$ such that $S_\alpha\not\subseteq I$.
\end{proposition}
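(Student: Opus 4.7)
The plan is to reduce to the classical Mautner phenomenon on the continuous auxiliary module furnished by semi-separability, and then to pull invariance back along the injection. Let $j\colon V\to U$ be the injective dual $G$-morphism from Definition~\ref{defi:semi-separable}. Since $U$ is a separable coefficient $G$-module, it is continuous by Lemma~\ref{lemma:cont_module}(ii). Set $u:=j(v)\in U$; by $G$-equivariance, $u$ is $T_I$-invariant.

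Fix $\alpha\in A$ with $S_\alpha\not\se I$ and let $b$ be any $k_\alpha$-root of $\GG_\alpha$ whose expansion in the basis $S_\alpha$ has a non-zero coefficient on some element of $S_\alpha\sm I$; equivalently, $b$ restricts non-trivially to $\TT_{\alpha,I\cap S_\alpha}$. Choose $t_n\in \TT_{\alpha,I\cap S_\alpha}(k_\alpha)\se T_I$ with $b(t_n)\to 0$, so that $t_n w t_n^{-1}\to e$ in $G$ for every $w$ in the $k_\alpha$-points of the unipotent root subgroup attached to $b$. Using the $T_I$-invariance of $u$ together with the fact that $\pi$ acts isometrically and continuously on $U$, one computes
$$\bigl\|\pi(w)u-u\bigr\|\ =\ \bigl\|\pi(t_n)^{-1}\bigl(\pi(t_n w t_n^{-1})u-u\bigr)\bigr\|\ =\ \bigl\|\pi(t_n w t_n^{-1})u-u\bigr\|\ \lra\ 0,$$
so $\pi(w)u=u$. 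Then $j(\pi(w)v)=\pi(w)j(v)=\pi(w)u=u=j(v)$, whence injectivity of $j$ yields $\pi(w)v=v$ in $V$.

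To upgrade this unipotent invariance into invariance by an almost $k_\alpha$-simple factor, decompose $\GG_\alpha$ into its almost $k_\alpha$-simple factors $\GG_{\alpha,i}$ with induced partition $S_\alpha=\bigsqcup_i S_{\alpha,i}$; since $S_\alpha\not\se I$, some $i_0$ satisfies $S_{\alpha,i_0}\not\se I$. Let $H\se\GG_{\alpha,i_0}(k_\alpha)$ be the subgroup generated by the unipotent root subgroups attached to all $k_\alpha$-roots of $\GG_{\alpha,i_0}$ meeting the above condition; the previous paragraph shows $v$ is $H$-fixed. The Chevalley commutator relations, together with the observation that any compound root $ib+jb'$ with $j>0$ and $b'$ meeting the condition again meets it, show that $H$ is normalized by every root subgroup of $\GG_{\alpha,i_0}$, hence by $\GG_{\alpha,i_0}(k_\alpha)^+$. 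Since $H$ contains non-trivial unipotents, the Borel--Tits simplicity theorems~\cite[\S6]{Borel-Tits73} force $H=\GG_{\alpha,i_0}(k_\alpha)^+$, the canonical normal cocompact subgroup of the desired almost $k_\alpha$-simple factor. The central obstacle is precisely the non-continuity of $\pi$ on $V$, which \emph{a priori} obstructs any direct contraction argument; semi-separability is introduced to sidestep this by routing the Mautner estimate through the continuous isometric module $U$, where the classical argument applies unchanged.
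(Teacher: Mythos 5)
Your proof is correct and follows essentially the same route as the paper: inject $V$ into the continuous separable module $U$, run the classical Mautner contraction argument there, and pull the invariance back through the injection. The paper simply delegates the contraction and generation steps to Margulis~[II.3.3(b)], closing with the Bruhat decomposition where you use Chevalley commutator relations and Tits' simplicity theorem; that difference is cosmetic. Two small repairs are needed in your write-up, however. First, the ``observation'' you invoke to show that $H$ is normalized by every root subgroup is false as stated: in type $A_2$ with $I\cap S_\alpha=\{s_2\}$, the roots $b=s_1+s_2$ and $b'=-s_1$ both meet your condition while $b+b'=s_2$ does not. The normalization claim nevertheless holds, because the only case where the commutator formula is actually needed is conjugation by a root subgroup whose root does \emph{not} meet the condition (otherwise that subgroup already lies in $H$), and in that case the compound roots $ib+jb'$ with $j>0$ do meet the condition, since otherwise $jb'=(ib+jb')-ib$ would lie in the span of $I\cap S_\alpha$. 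Second, your conclusion is invariance under $\GG_{\alpha,i_0}(k_\alpha)^+$ rather than under the full factor $\GG_{\alpha,i_0}(k_\alpha)$ demanded by the statement; since $\GG_{\alpha,i_0}$ is simply connected and isotropic over the local field $k_\alpha$, the two coincide (Kneser--Tits over local fields, i.e.\ Platonov's theorem), but this identification should be made explicit.
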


\begin{remark}
The set $S_\alpha \sm I$ determines which almost $k_\alpha$-simple factor(s) will fix $v$. In fact, being simply connected, the $\GG_\alpha$ are direct products of their almost simple factors (3.1.2 p.~46 in~\cite{Tits66}. Thus, upon adding multiplicities to $A$ accordingly, one can get a more precise statement for products of almost simple groups.
\end{remark}

\begin{proof}[Proof of Proposition~\ref{prop:mautner}]
Let $V\to U$ be the map of Definition~\ref{defi:semi-separable}. Since it is injective, it suffices to prove the proposition for the image of $v$ in the $G$-module $U$, which is continuous (Remark~\ref{rem:semi-separable}). Therefore, we suppose without loss of generality that $V$ itself is continuous.

The classical Mautner lemma as established in~\cite[II.3.3(b)]{Margulis} now finishes the proof. Indeed, the entire argument given therein applies in greater generality and really shows the following. If $G$ acts continuously by isometries on some metric space $V$ and $v\in V$ is $T_I$-fixed, then it fixed by some almost $k_\alpha$-simple factor of $\GG_\alpha(k_\alpha)$ for all $\alpha\in A$ such that $S_\alpha\not\subseteq I$. For the reader's convenience, we sketch the argument. First, the continuity and isometry assumptions imply immediately that $v$ is fixed by any $g\in G$ such that the closure of $\{t g t^{-1} : t\in T_I\}$ contains the identity. Next, the contracting properties of the $T_I$-action on the unipotent radical of $P_I$ determine a large part of this radical consisting of such $g$. The same argument holds for the radical of the opposite parabolic $P_I^{-}$ and the Bruhat decomposition implies that together these unipotents generate the adequate almost simple factor(s).
\end{proof}

It follows from the definition that a {weak-$*$} closed submodule of a semi-separable coefficient module is still a semi-separable coefficient module; we record for later use that the property passes also to quotients in a special case:

\begin{lemma}\label{lemma:semi-separable_quotient}
Let $G$ be a locally compact second countable group, $N\lhd G$ a normal subgroup and $V$ a semi-separable coefficient $G$-module. Then $V/V^N$ is a semi-separable coefficient $G$-module.
\end{lemma}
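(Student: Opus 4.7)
The plan is to promote the defining injection $\phi\colon V\to U$ of semi-separability to a corresponding injective dual $G$-morphism at the level of $N$-coinvariants, namely $\bar\phi\colon V/V^N\to U/U^N$.

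First, I would verify that $V/V^N$ is itself a coefficient $G$-module. Since $N\lhd G$, the subspace $V^N$ is $G$-invariant; since the representation on any coefficient module is weak-$*$ continuous, one has $V^N=\bigcap_{n\in N}\ker(\pi(n)-\id)$ as an intersection of weak-$*$ closed subspaces, so $V^N$ is weak-$*$ closed. Writing $V=(V^\flat)^*$, the annihilator ${}^\perp V^N\se V^\flat$ is then a closed $G$-invariant subspace inheriting a continuous $G$-action, and $V/V^N$ is canonically $G$-isomorphic to its dual, hence is a coefficient $G$-module. Applying exactly the same reasoning to the separable coefficient $G$-module $U$ provided by semi-separability shows that $U/U^N$ is a coefficient $G$-module; moreover $U/U^N$ is separable, being a norm-quotient of the separable Banach space $U$.

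For the main step, the morphism $\phi$ carries $V^N$ into $U^N$ and hence descends to a $G$-morphism $\bar\phi\colon V/V^N\to U/U^N$. Injectivity is direct: if $\phi(v)\in U^N$, then for every $n\in N$ one has $\phi(\pi(n)v-v)=\pi(n)\phi(v)-\phi(v)=0$, so $\pi(n)v=v$ by injectivity of $\phi$, placing $v$ in $V^N$. To see that $\bar\phi$ is itself dual, one notes that the pre-dual map $\phi^\flat\colon U^\flat\to V^\flat$ sends ${}^\perp U^N$ into ${}^\perp V^N$ (immediate from $\phi(V^N)\se U^N$), and $\bar\phi$ is precisely the adjoint of the restriction $\phi^\flat|_{{}^\perp U^N}$.

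The only subtlety I would flag lies in the first step: one must treat $V^N$ as a \emph{weak-$*$} closed subspace rather than merely a norm-closed one, since only then does $V/V^N$ inherit the structure of a coefficient module. This in turn relies precisely on the weak-$*$ continuity of the action of $N$, which is part of the data of a coefficient module and does not require passing to $\cm V$.
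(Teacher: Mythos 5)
Your argument is correct and follows essentially the same route as the paper: both pass to the quotients $V/V^N$ and $U/U^N$, observe that $V^N$ and $U^N$ are weak-$*$ closed and $G$-invariant so that the quotients remain (semi-separable, respectively separable) coefficient modules, and use $G$-equivariance of the injection $V\to U$ to see that the induced dual map $V/V^N\to U/U^N$ is still injective. Your added care about weak-$*$ closedness and the identification with the dual of the pre-annihilator simply makes explicit what the paper leaves implicit.
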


\begin{proof}
Let $V\to U$ be as in Definition~\ref{defi:semi-separable}. Observe that $V^N$, $U^N$ are {weak-$*$} closed and $G$-invariant; hence $V/V^N$, $U/U^N$ are also coefficient $G$-modules. The $G$-equivariance of the injection $V\to U$ implies the injectivity (and definiteness) of the map $V/V^N \to U/U^N$, which is also adjoint.
\end{proof}

\section{Vanishing}
\subsection{Semi-simple groups}\label{sec:vanish_ss}
Theorem~\ref{thm:simplesimple} is a special case of (both statements in) Theorem~\ref{thm:semi-simple} and will therefore not be discussed separately.

\begin{remark}[{{\bfseries compact factors}}]\label{rem:compact_factors}
Let $G$ be as for Theorem~\ref{thm:semi-simple} and let $K\lhd G$ be the product of the groups of $k_\alpha$-points of all $k_\alpha$-anisotropic factors of all $\GG_\alpha$, which is compact (see \emph{e.g.}~\cite{Prasad82}). We thus have canonical identifications
$$\hb^\bu(G; V)\ \cong\ \hb^\bu(G; V^K)\ \cong\ \hb^\bu(G/K; V^K)$$
for any Banach $G$-module $V$, see~\cite[8.5.6, 8.5.7]{Monod}. Since $k_\alpha$-anisotropy is equivalent to the vanishing of the $k_\alpha$-rank, we have $\rank(G/K)=\rank(G)$ and $\minrank(G/K)\geq\minrank(G)$. This implies that we can and shall assume throughout the proof of Theorem~\ref{thm:semi-simple} that there are no anisotropic factors. Point~(i) will then be established under the slightly weaker hypothesis that there are no vectors fixed simultaneously by some isotropic factor and by $K$.
\end{remark}

\begin{proof}[Proof of Theorem~\ref{thm:semi-simple}~(i)]
We adopt the notations of the theorem; since by Remark~\ref{rem:compact_factors} we can assume for all $\alpha$ that $\GG_\alpha$ has no $k_\alpha$-anisotropic factors, we are in the situation of Section~\ref{sec:tits}. However, we shall rather apply the results of that section to the group $\ti G=G\times G$, itself also an example of the same setting. In an attempt at suggestive notation (and perhaps with a view on potential extensions to Kac--Moody groups), we write $\ti A= A\sqcup A^-$, $\ti S= S\sqcup S^-$, etc., where $A^-$ is just a disjoint copy of $A$, and so on. We then choose opposite roots for the second factor, so that when $I\se \ti S$ is entirely contained in $S^-$, the group $P_I^-$ is indeed the parabolic opposite to the corresponding parabolic in the first factor, justifying the notation.

The rank $r$ of Section~\ref{sec:tits} is $r=2\,\rank(G)$. We apply Theorem~\ref{thm:complexe:lw} and obtain an exact sequence
\begin{equation}\label{eq:lw_tiG}
0\lra V\lra \lw(\ST^{(0)}; V) \lra \lw(\ST^{(1)}; V) \lra \cdots\lra \lw(\ST^{(r-1)}; V)
\end{equation}
of adjoint maps with closed ranges. We set $W_p=\lw(\ST^{(p)}; V)$ for $p\leq r-1$ and $W_r=W_{r-1}/dW_{r-2}$. Let $G$ act on each $\ST^{(p)}$ \emph{via} the diagonal embedding $\Delta: G\to \ti G$. Now each $W_p$ is a coefficient $G$-module for the representation~\eqref{eq:regular} which uses the $G$-representation on $V$. (This representation was not used in establishing the exactness of the sequence~\eqref{eq:lw_tiG} of $\ti G$-modules, and indeed $\ti G$ does not act on $V$.) In conclusion, we have a dual exact sequence
$$0 \lra  V=W_{-1} \xrightarrow{\ d_0\ } W_0 \xrightarrow{\ d_1\ }\cdots \xrightarrow{\ d_{r}\ } W_{r} \xrightarrow{\ d_{r+1}} 0$$
coefficient $G$-modules. In order to prove case~(i) of Theorem~\ref{thm:semi-simple}, it suffices now to check the vanishing assumptions of Proposition~\ref{prop:ex_sequence}.

We prove that in fact $\hb^\bu(G; W_p)$ vanishes altogether for all $0\leq p\leq r-1$. Since $\hb^\bu(G;-)$ commutes with finite sums~\cite[8.2.10]{Monod}, it suffices to show
$$\hb^\bu\big(G; \lw(\ti G/\ti P_{\ti I};V)\big)\ =\ 0$$
for all $\ti I$ with $|\ti I|=r-1-p$. We can write $\ti P_{\ti I}=P_{I}\times P_{I^-}^-$, with $P_I$ and $P_{I^-}$ standard parabolics for $G$ and $\ti I=I\sqcup I^-$. The set $P_I\cdot P_{I^-}^- \se G$ contains $P\cdot P^-$, which is of full measure. Indeed, since $P^-= w_0 P w_0$, the set $P\cdot P^-$ is the translated of the big cell in the Bruhat decomposition, which is Zariski-open (compare also~\cite[4.2]{Borel-Tits65}). Since the $G$-action on $\ti G$ is diagonal, it follows that there is a canonical isomorphism of coefficient $G$-modules
$$\lw(\ti G/\ti P_{\ti I};V)\ \cong\ \lw(G/(P_I\cap P_{I^-}^-);V)$$
(where now in the right hand side $P_{I^-}^-$ is also considered as a subgroup of the same factor $G$). We now apply the induction isomorphism (\emph{\`a la} Eckmann--Shapiro) for bounded cohomology~\cite[10.1.3]{Monod} to the subgroup $P_I\cap P_{I^-}^-$ of $G$, recalling that it takes a simpler form since $V$ is a $G$-module rather than just a $P_I\cap P_{I^-}^-$-module~\cite[10.1.2(v)]{Monod}; namely:
$$\hb^\bu\big(G;\lw(G/(P_I\cap P_{I^-}^-);V)\big)\ \cong\ \hb^\bu(P_I\cap P_{I^-}^-;V).$$
At this point we observe that the condition $p\geq 0$ forces at least one of the sets $I$, $I^-$ to be a proper subset of $S$. We shall assume $I\neq S$, the other case being dealt with in a symmetric fashion. By abuse of language, we call \emph{L\'evi decomposition} the decomposition $P_I= V_I \rtimes \SZ_G(T_I)$, where $\PP_{\alpha, I\cap S_\alpha}=\VV_{\alpha, I\cap S_\alpha} \rtimes \SZ_{\GG_\alpha}(\TT_{\alpha,I\cap S_\alpha})$ is a L\'evi decomposition and $V_I = \prod_{\alpha\in A}\VV_{\alpha, I\cap S_\alpha}(k_\alpha)$. Consider the subgroup
$$R\ =\ (V_I\rtimes T_I)\cap P_{I^-}^-\ <\ P_I\cap P_{I^-}^-$$
and observe that it is normal. Being soluble and hence amenable, there is an isomorphism~\cite[8.5.3]{Monod}
$$\hb^\bu(P_I\cap P_{I^-}^-;V))\ \cong\ \hb^\bu(P_I\cap P_{I^-}^-;V^R).$$
Since $T_I<R$, we have $V^R\se V^{T_I}$. Thus Proposition~\ref{prop:mautner} implies that the elements of $V^R$ are fixed by some almost $k_\alpha$-simple factor of $\GG_\alpha(k_\alpha)$ for all $\alpha\in A$ such that $S_\alpha\not\subseteq I$. There is at least one such $\alpha$ since $I\neq S$. In view of the assumption of Theorem~\ref{thm:semi-simple}, we conclude $V^R=0$; this completes the proof that $\hb^\bu(G; W_p)$ vanishes.
\end{proof}

\subsection{A first glance at products}
Our main results on products will be established in Section~\ref{sec:products} below; in order to complete the proof of Theorem~\ref{thm:semi-simple}~(ii), we present here a different type of argument where the number of factors does not contribute anything to the vanishing. Consider the following condition on a locally compact second countable group $H$ for an integer $m\in \NN$.

\begin{itemize}
\item[$(*_m)$] The space $\hb^q(H;V)$ vanishes for all semi-separable coefficient $H$-modules $V$ with $V^H=0$ and all $q\leq m$.
\end{itemize}

\begin{proposition}\label{prop:product_glance}
Let $G= G_1\times \cdots\times G_\ell$ be a product of locally compact second countable groups and let $m\in \NN$. If each $G_i$ has property~$(*_m)$, then so does $G$.
\end{proposition}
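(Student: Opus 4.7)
The proof will go by induction on $\ell$; the case $\ell=1$ is the hypothesis. For the inductive step, write $G=G_1\times G'$ with $G'=G_2\times\cdots\times G_\ell$, which enjoys $(*_m)$ by induction. Fix a semi-separable coefficient $G$-module $V$ with $V^G=0$; I aim at $\hb^q(G;V)=0$ for every $q\leq m$.

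The backbone of the argument is the continuous bar resolution of $V$ regarded as a $G_1$-module,
$$0\lra V\lra \lw(G_1;V)\lra \lw(G_1^2;V)\lra\cdots,$$
viewed as an exact sequence of dual morphisms of coefficient $G$-modules (the simplicial differentials are $G$-equivariant, with $G_1$ acting diagonally on the $G_1^{p+1}$-factor and $G$ on values). Truncating at $r=m+1$ and combining Proposition~\ref{prop:ex_sequence} with Remark~\ref{rem:ex_sequence}, it suffices to prove that $\hb^q\bigl(G;\lw(G_1^{p+1};V)\bigr)=0$ for all $p\geq 0$ and $q\leq m$. The canonical $G$-equivariant identification $\lw(G_1^{p+1};V)\cong\lw(G/G';\lw(G_1^p;V))$ and Eckmann--Shapiro~\cite[10.1.2(v)]{Monod} then yield
$$\hb^\bu\bigl(G;\lw(G_1^{p+1};V)\bigr)\ \cong\ \hb^\bu\bigl(G';\lw(G_1^p;V)\big|_{G'}\bigr).$$
Since $G'$ acts trivially on $G_1^p$, one has $\lw(G_1^p;V)^{G'}=\lw(G_1^p;V^{G'})$; and endowing $G_1^p$ with any probability measure equivalent to Haar (on which $G'$ acts trivially), Lemma~\ref{lemma:semi-separable} (with trivial cocycle) exhibits $\lw(G_1^p;V)$ as a semi-separable $G'$-module. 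Consequently, under the hypothesis $V^{G'}=0$, property $(*_m)$ on $G'$ delivers the required vanishing.

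The remaining task, and the principal obstacle, is the reduction to the case $V^{G'}=0$. The short exact sequence $0\to V^{G'}\to V\to V/V^{G'}\to 0$ consists of semi-separable coefficient $G$-modules (the quotient via Lemma~\ref{lemma:semi-separable_quotient}). On $V^{G'}$ the argument runs in the symmetric direction: using the bar resolution over $G'$ and the fact that $(V^{G'})^{G_1}=V^G=0$, combined with $(*_m)$ on $G_1$, one gets $\hb^q(G;V^{G'})=0$. For the quotient $V/V^{G'}$ the $G'$-invariants may persist, so one must successively quotient out such pieces along an ascending filtration $V^{G'}\se V^{(2)}\se\cdots$, where $V^{(n+1)}$ is the pre-image in $V$ of $(V/V^{(n)})^{G'}$; this process stabilizes thanks to the embedding of $V$ into a separable continuous module provided by Definition~\ref{defi:semi-separable}, and the long exact sequences from each step combine to yield the conclusion. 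This reduction is the technical heart of the proof and where semi-separability is genuinely at work.
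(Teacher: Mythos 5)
Your overall architecture coincides with the paper's: induction on $\ell$, the bar resolution over one factor combined with the induction isomorphism and property $(*_m)$ of the complementary factor to prove the ``one-sided'' vanishing claim, and then the short exact sequence $0\to V^{G'}\to V\to V/V^{G'}\to 0$ with the two halves handled by the two symmetric versions of that claim. Those steps are correct as written (the paper resolves over $G_2$ and applies $(*_m)$ to $G_1$ where you resolve over $G_1$ and apply $(*_m)$ to $G'$; this is only a relabelling).

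The gap is in your treatment of the quotient $V/V^{G'}$. You assert that its $G'$-invariants ``may persist'' and therefore set up an ascending filtration $V^{G'}\se V^{(2)}\se\cdots$, claiming it stabilizes ``thanks to the embedding of $V$ into a separable continuous module''. That justification does not work: semi-separability gives no ascending chain condition on weak-$*$ closed invariant subspaces, so the termination of your process is unproved and, as written, the argument is incomplete. The repair is that the filtration is unnecessary: for any Banach $G$-module $V$ with isometric (or uniformly bounded) action and any subgroup $N$, one has $(V/V^N)^N=0$. Indeed, if $v+V^N$ is $N$-invariant, then $c(g)=\pi(g)v-v$ takes values in $V^N$, hence $c(gh)=c(g)+c(h)$, so $c$ is a homomorphism $N\to V^N$ bounded by $2\|v\|$ and therefore zero, giving $v\in V^N$. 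This is precisely \cite[1.2.10]{Monod}, which the paper invokes at this point: $V/V^{G'}$ already satisfies the hypothesis of your ``backbone'' claim, your filtration collapses after one step, and a single long exact sequence finishes the proof.
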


\begin{proof}
We argue by induction on $\ell$ and observe that the case $\ell=1$ is tautological. Let now $\ell\geq 2$. Upon writing $G$ as $G_1\times \prod_{i\neq 1}G_i$, we see that the induction hypothesis allows us to suppose $\ell=2$. We thus set $G=G_1\times G_2$ and consider a semi-separable coefficient $G$-module $V$ with $V^G=0$.

We claim first that $\hb^q(G;U)$ vanishes for $q\leq m$ for any semi-separable coefficient $G$-module $U$ with either $U^{G_1}=0$ or $U^{G_2}=0$. By symmetry, we consider only $U^{G_1}=0$. We consider the (dual) exact sequence of $G$-modules
$$0\lra U \lra \lw(G_2;U) \lra \lw(G_2^2;U) \lra \cdots \lra \lw(G_2^{p+1};U)\lra \cdots$$
In view of Proposition~\ref{prop:ex_sequence} and Remark~\ref{rem:ex_sequence}, it suffices to prove that $\hb^q(G;\lw(G_2^{p+1};U))$ vanishes for all $p\geq 0$ and all $q\leq m$. We recall the identification of $\lw(G_2^{p+1};U)$ with $\lw(G/G_1;\lw(G_2^p;U))$, see~\cite[2.3.3]{Monod}, and the special form of the induction isomorphism already mentioned above (10.1.2(v) and 10.1.3 from~\cite{Monod}); it follows the isomorphism
\begin{equation}\label{eq:isom_ind}
\hb^q(G;\lw(G_2^{p+1};U))\ \cong\ \hb^q\big(G_1; \lw(G_2^p;U)\big),
\end{equation}
wherein $G_1$ acts trivially on $G_2^p$, $p\geq 0$. In particular, we may represent the Haar measure class by a finite measure and we see that $\lw(G_2^p;U)$ is semi-separable \emph{as a $G_1$-module}. Now since $\lw(G_2^p;U)^{G_1}$ is $\lw(G_2^p;U^{G_1})$ which vanishes, the right hand side of~\eqref{eq:isom_ind} vanishes by the assumption on $G_1$. In conclusion, the claim is established indeed.

The space $V^{G_1}$ is {weak-$*$} closed and we have a dual exact sequence of coefficient $G$-modules
\begin{equation}\label{eq:exact_left-right}
0\lra V^{G_1} \lra V \lra V/V^{G_1} \lra 0.
\end{equation}
All three are semi-separable, see Lemma~\ref{lemma:semi-separable_quotient}. By assumption, we have $(V^{G_1})^{G_2}=0$. On the other hand, one verifies readily that $(V/V^{G_1})^{G_1}$ vanishes, see~\cite[1.2.10]{Monod}. Therefore, the long exact sequence~\cite[8.2.1(ii)]{Monod} associated to~\eqref{eq:exact_left-right}, together with the above claim, shows that $\hb^q(G;V)$ vanishes for all $q\leq m$.
\end{proof}

\begin{proof}[Proof of Theorem~\ref{thm:semi-simple}~(ii)]
Since each $\GG_\alpha$ is simply connected, it is the direct product of its almost simple factors (see~3.1.2 p.~46 in~\cite{Tits66}). Therefore, we can assume that each $\GG_\alpha$ is almost $k_\alpha$-simple. The statement now follows by combining the case of a single factor in~(i) with Proposition~\ref{prop:product_glance} for $m=2\,\minrank(G)-1$.
\end{proof}

\subsection{Lattices}\label{sec:lattices}
Let $G$ be a locally compact second countable group, $\Gamma<G$ a lattice and $W$ a coefficient $\Gamma$-module. Recall~\cite[10.1.1]{Monod} that one defines the $\lft$-\emph{induced} coefficient $G$-module $V=\lw(G;W)^\Gamma$ by the right translation $G$-action. This module can also be viewed as a special case of the construction given for ergodic-theoretical cocycles in Section~\ref{sec:semi-separable} by taking for $\Omega$ a Borel fundamental domain for $\Gamma$ in $G$ and considering the corresponding cocycle $\beta: G\times \Omega\to \Gamma$. In particular, Lemma~\ref{lemma:semi-separable} applies:

\begin{lemma}\label{lemma:ind:lattices}
If $W$ is semi-separable, then the $\lft$-induced module $V=\lw(G;W)^\Gamma$ is a semi-separable coefficient $G$-module.\hfill\qedsymbol
\end{lemma}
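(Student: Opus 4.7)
The plan is to reduce the statement directly to Lemma~\ref{lemma:semi-separable} by realising $V=\lw(G;W)^\Gamma$ as the twisted $\lft$-module associated to a Borel cocycle on a finite $G$-space, exactly as the paper indicates just before the lemma.

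First I would pick a Borel fundamental domain $\Omega\subseteq G$ for the right action of $\Gamma$; since $\Gamma$ is a lattice, the restriction of a left Haar measure to $\Omega$ is finite and can be normalised to a probability measure $\mu$. The identification $\omega\mapsto \omega\Gamma$ turns $(\Omega,\mu)$ into a standard probability space carrying the measurable and measure-preserving translation $G$-action inherited from $G/\Gamma$: the element $g\cdot\omega$ is the unique point of $\Omega\cap g\omega\Gamma$. The associated return cocycle $\beta:G\times\Omega\to\Gamma$ is characterised by $g\omega = (g\cdot\omega)\,\beta(g,\omega)$, and is Borel.

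Next I would show that restriction to $\Omega$ yields a $G$-equivariant, isometric, adjoint isomorphism $\lw(G;W)^\Gamma \cong \lw(\Omega;W)$, where the target carries precisely the twisted representation~\eqref{eq:twisted} with $H=\Gamma$, $V_0=W$ and cocycle $\beta$. This is a routine verification: the inverse map extends a function on $\Omega$ to $G$ by the $\Gamma$-equivariance rule of $\lw(G;W)^\Gamma$, and a direct substitution using the defining equation for $\beta$ turns the $G$-translation action on the left-hand side into formula~\eqref{eq:twisted} on the right-hand side. The duality of this identification follows from the analogous statement at the level of $L^1$-spaces.

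With this identification in place, the hypothesis that $W$ is semi-separable as a $\Gamma$-module allows me to invoke Lemma~\ref{lemma:semi-separable} verbatim, and the conclusion that $V$ is a semi-separable coefficient $G$-module is immediate. The only delicate point in the whole argument is matching the left/right and inverse conventions between the $\lft$-induced $G$-action, the $\Gamma$-equivariance condition, and formula~\eqref{eq:twisted}; this is bookkeeping rather than a genuine obstacle, and no other ingredient is required.
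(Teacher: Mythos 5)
Your proposal is correct and follows exactly the route the paper intends: the sentence preceding the lemma already identifies $\lw(G;W)^\Gamma$ with the twisted module $\lw(\Omega;W)$ of~\eqref{eq:twisted} for the return cocycle $\beta:G\times\Omega\to\Gamma$ on a fundamental domain, and the lemma is then an immediate application of Lemma~\ref{lemma:semi-separable}. Your explicit verification of the finiteness of the measure (needed for the $L^2$ construction in that lemma) and of the equivariant isometric identification is precisely the bookkeeping the paper leaves to the reader.
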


The induction isomorphism~\cite[10.1.3]{Monod} reads
\begin{equation}\label{eq:induction}
\hb^\bu(\Gamma;W)\ \cong \ \hb^\bu(G;V).
\end{equation}
The two corollaries not involving irreducibility can now be readily deduced from the corresponding theorems on semi-simple groups. Corollary~\ref{cor:lattice_simple} being a special case of Corollary~\ref{cor:lattice_semisimple_mink}, we give:

\begin{proof}[Proof of Corollary~\ref{cor:lattice_semisimple_mink}]
A $G$-invariant element of the induced semi-separable coefficient $G$-module $V$ is a constant map $G\to W$; being $\Gamma$-equivariant, it ranges in $W^\Gamma$ and thus vanishes. Therefore, point~(ii) of Theorem~\ref{thm:semi-simple} together with the induction isomorphism~\eqref{eq:induction} yields the statement of the corollary.
\end{proof}

We now introduce the additional material needed for the general results on irreducible lattices. Let again $G$, $\Gamma$ and $W$ be as in the beginning of this Section~\ref{sec:lattices}. Let further $N\lhd G$ be a closed normal subgroup. In keeping with the notation of the Introduction, denote by $W_N\se W$ the collection of all $w\in W$ such that the orbit map $\Gamma\to W$, $\gamma\mapsto \pi(\gamma)w$ can be extended to a continuous map $G\to W$ factoring through $G/N$. This set is possibly reduced to zero.

\begin{lemma}\label{lemma:fixed_induced}
If $\Gamma\cdot N$ is dense in $G$, then $W_N$ is a closed $\Gamma$-invariant linear subspace and thus $W_N$ has a structure of continuous $G$-module that extends the $\Gamma$-structure and descends to a continuous $G/N$-module structure. Moreover, there is a canonical $G$-equivariant isometric isomorphism $W_N\cong \cm V^N$.
\end{lemma}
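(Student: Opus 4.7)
My plan is to exhibit a natural candidate bijection $\Psi:W_N\to\cm V^N$ sending $w$ to the unique continuous extension $f_w:G\to W$ of its orbit map that factors through $G/N$ (uniqueness comes from density of $\Gamma N/N$ in $G/N$), and to verify every claimed property by repeatedly exploiting this density together with normality of $N$.

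First, for the claim that $W_N$ is a closed linear $\Gamma$-invariant subspace: linearity is immediate from uniqueness of extensions. For $\Gamma$-invariance, given $w\in W_N$ with extension $f_w$, the map $g\mapsto f_w(g\gamma_0)$ extends the orbit map of $\pi(\gamma_0)w$; it factors through $G/N$ because $gn\gamma_0=g\gamma_0\cdot(\gamma_0^{-1}n\gamma_0)$ with $\gamma_0^{-1}n\gamma_0\in N$ by normality. For closedness, since $\pi$ acts by isometries, $\|f_w(\gamma)\|=\|w\|$ on $\Gamma$; by continuity of the norm function and $N$-invariance of $f_w$, this gives $\|f_w(g)\|=\|w\|$ on all of $G$. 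Hence if $w_n\to w$ in $W$ with $w_n\in W_N$, the sequence $(f_{w_n})$ is uniformly Cauchy in $C_b(G/N;W)$, and its uniform limit is the required extension for $w$.

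Next, I would define $\pi_G(g)w:=f_w(g)$ and prove the master identity: for each $g\in G$, the vector $f_w(g)$ lies in $W_N$ with continuous extension $h\mapsto f_w(hg)$. This candidate is continuous and factors through $G/N$ (same normality trick); it agrees with the orbit map $h\mapsto\pi(h)f_w(g)$ on the dense set $\Gamma N$ because for $h=\gamma_0 n$ one has $f_w(\gamma h)=f_w(\gamma\gamma_0n)=\pi(\gamma\gamma_0)w=\pi(\gamma)f_w(h)$ by $N$-invariance of $f_w$, and the two maps are both continuous in $h$. This single identity is the core of everything that follows: it makes $\pi_G$ a well-defined $G$-homomorphism, shows that all its orbit maps are continuous so that Lemma~\ref{lemma:cont_module}(i) promotes $\pi_G$ to a continuous representation, is isometric since $\|f_w(g)\|=\|w\|$, and descends to $G/N$ because $f_w(n)=f_w(e)=w$.

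Finally, for the isomorphism $W_N\cong\cm V^N$: the $\Gamma$-equivariance of $f_w$ (read off the master identity) and its $N$-right-invariance place $\Psi(w)=f_w$ in $V^N$, while the identity $g_0\cdot f_w=f_{f_w(g_0)}=\Psi(\pi_G(g_0)w)$ shows that its orbit map in $V$ is continuous and that $\Psi$ is $G$-equivariant; isometry and injectivity are clear from $\|f_w\|_\infty=\|w\|$ and $f_w(e)=w$. For surjectivity, which is the one place requiring genuine analytic input, I would apply Lemma~\ref{lemma:cont_lw} to $V$ viewed as a closed $G$-invariant subspace of $\lw(G;W)$ with $W$ given the trivial $G$-action (legitimate because the $G$-action on $V$ is pure right translation): any $\phi\in\cm V^N$ admits a bounded right-uniformly continuous representative $G\to W$, whose $\Gamma$-equivariance and $N$-right-invariance hold pointwise by continuity, so $w:=\phi(e)$ lies in $W_N$ with $f_w=\phi$. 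The main obstacle throughout is that $\Gamma$ itself is not dense in $G$, so every identity must first be checked on $\Gamma$, propagated to $\Gamma N$ via $N$-invariance, and then extended to $G$ by continuity — which is why normality of $N$ must be invoked at each step.
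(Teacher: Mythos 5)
Your proof is correct and is essentially the paper's argument run in the opposite direction: the paper takes evaluation at the identity $E:\cm V^N\to W_N$ (using Lemma~\ref{lemma:cont_lw} to see that continuous vectors are genuine uniformly continuous maps) as the primary object, whereas you take the extension map $w\mapsto f_w$ as primary and use Lemma~\ref{lemma:cont_lw} only for surjectivity. The substance — density of $\Gamma\cdot N$, normality of $N$, and the identification of $\cm V$ with continuous functions — is the same, with your write-up supplying details the paper leaves implicit.
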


\begin{remark}\label{rem:not_ambiguous}
The notation $\cm V^N$ is not ambiguous since one checks
$$(\cm_G V)^N = \cm_G(V^N) = \cm_{G/N}(V^N)$$
(notation introduced in Section~\ref{sec:complements_modules}).
\end{remark}

\begin{proof}[Proof of Lemma~\ref{lemma:fixed_induced}]
In view of Lemma~\ref{lemma:cont_lw} (with left and right exchanged), the elements of $\cm V$ are continuous maps and may thus be evaluated at the identity. This provides us with a $\Gamma$-equivariant map $\cm V\to W$. The density assumption implies that its restriction $E:\cm V^N\to W$ is isometrically injective; moreover, it ranges in $W_N$. On the other hand, for any $w\in W_N$ there is a continuous map $G\to W$, $g\mapsto \pi(g)w$ which is automatically left uniformly continuous. This provides an isometric right inverse to $E:\cm V^N\to W_N$.
\end{proof}

In fact we shall often use the above lemma as follows.

\begin{lemma}\label{lemma:fixed_induced_notcont}
If $\Gamma\cdot N$ is dense in $G$ and $V^N\neq 0$, then $W_N<W$ is a non-zero $\Gamma$-submodule which extends uniquely to a continuous $G$-module and descends to a continuous $G/N$-module.
\end{lemma}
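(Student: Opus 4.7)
The plan is to deduce this immediately from Lemma~\ref{lemma:fixed_induced} by verifying the one missing ingredient, namely that $W_N$ is non-zero; the remaining assertions about continuity, the $G$-structure, and the factorisation through $G/N$ are already furnished by that lemma, and uniqueness is a short density argument.

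First I would observe that, since $N\lhd G$, the space $V^N$ is a weak-$*$ closed $G$-invariant subspace of $V$, and is therefore itself a coefficient $G$-module. The key step is then to show $\cm V^N\neq 0$. For this I would invoke the weak-$*$ density of continuous vectors used in the proof of Proposition~\ref{prop:cont_exact}: for an approximate identity $\{\psi_\alpha\}\subseteq L^1(G)$, one has $\pi(\psi_\alpha)v\to v$ in the weak-$*$ topology and $\pi(\psi_\alpha)v\in\cm V^N$ (because $\pi(\psi_\alpha)$ commutes with the $N$-action since $V^N$ is $G$-invariant, or equivalently by Remark~\ref{rem:not_ambiguous}). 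Applied to any non-zero $v\in V^N$, this yields $\cm V^N\neq 0$.

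With this in hand, Lemma~\ref{lemma:fixed_induced} provides a $G$-equivariant isometric isomorphism $W_N\cong \cm V^N$, so $W_N\neq 0$; the same lemma endows $W_N$ with a continuous $G$-module structure extending the $\Gamma$-action and descending to $G/N$.

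Finally, for uniqueness I would argue as follows. Suppose $\pi_1,\pi_2$ are two continuous $G$-module structures on $W_N$ that both extend the given $\Gamma$-action and both factor through $G/N$. Then for every $\gamma\in\Gamma$ and $n\in N$ we have $\pi_i(\gamma n)=\pi_i(\gamma)=\pi(\gamma)$, so $\pi_1$ and $\pi_2$ agree on the dense subset $\Gamma\cdot N\subseteq G$; continuity forces $\pi_1=\pi_2$. The main ``obstacle'' is really just the non-zeroness of $\cm V^N$, and this is handled cleanly by the weak-$*$ approximate-identity argument recalled above.
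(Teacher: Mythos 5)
Your proposal is correct and follows essentially the same route as the paper: the paper's proof likewise reduces everything to showing $\cm V^N\neq 0$ via Lemma~\ref{lemma:fixed_induced}, citing the weak-$*$ density of continuous vectors in a coefficient module (which is exactly the approximate-identity argument you spell out). Your explicit treatment of uniqueness via density of $\Gamma\cdot N$ is a harmless addition that the paper leaves implicit.
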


\begin{proof}
In view of Lemma~\ref{lemma:fixed_induced}, it suffices to prove that $\cm V^N$ is non-zero. Observe that $V^N$ is a coefficient $G$-module and recall Remark~\ref{rem:not_ambiguous}. Since $V^N\neq 0$, the statement follows from the fact that the submodule of continuous vectors of a coefficient module is always weak-$*$ dense~\cite[3.2.3]{Monod}.
\end{proof}

\begin{remark}[{{\bfseries irreducibility}}]\label{rem:irreducibility}
Let $G$ be as for Theorem~\ref{thm:semi-simple}, let $K\lhd G$ be the product of all compact factors (cf. Remark~\ref{rem:compact_factors}) and let $\Gamma<G$ be a lattice. Margulis' criterion~\cite[II.6.7]{Margulis} shows that $\Gamma$ is irreducible if and only if $\Gamma\cdot N$ is dense in $G$ whenever $N$ is the product of $K$ with at least one isotropic factor. As pointed out in Remark~\ref{rem:compact_factors}, Theorem~\ref{thm:semi-simple}~(i) holds under the assumption that $V^N=0$ for all such $N$.
\end{remark}

\begin{proof}[Proof of Corollary~\ref{cor:restriction}]
Let $V=\lft_0(G/\Gamma)$ be the integral zero subspace of $\lft_0(G/\Gamma)$, which identifies to the quotient $\lft(G/\Gamma)/\RR$. Then $V$ is a semi-separable coefficient $G$-module, as follows either by a trivial case of Lemma~\ref{lemma:semi-separable_quotient} or by mapping it into $L^2_0(G/\Gamma)$. A combination of the long exact sequence with the induction isomorphism~\eqref{eq:induction} shows that in order to prove the corollary, it suffices to prove that $\hb^2(G;V)$ vanishes for all $n< 2\,\rank(G)$ (this is also explained in Corollary~10.1.7 of~\cite{Monod}). Since $\Gamma$ is irreducible, $V^N=0$ for every $N$ as in Remark~\ref{rem:irreducibility} (this can be seen as a trivial case of Lemma~\ref{lemma:fixed_induced_notcont}). Thus Theorem~\ref{thm:semi-simple} indeed establishes the required fact.
\end{proof}

As noted in Remark~\ref{rem:general_groups}, our results can be adapted to more general algebraic groups or Lie groups. In fact, in order to prove Corollary~\ref{cor:spaces}, some arrangements are needed in Corollary~\ref{cor:restriction} to accommodate for the fact that algebraic connectedness and simple connectedness of $\RR$-groups do not coincide with their topological counterpart for the groups of $\RR$-points. Therefore, we first propose the following variant of Corollary~\ref{cor:restriction} (which is more general than what we shall need).

\begin{corollary}\label{cor:restriction_R}
Let $L$ be any connected Lie group and let $\Gamma<L$ be an irreducible lattice. Then the restriction map
$$\hb^n(L; \RR) \lra \hb^n(\Gamma; \RR)$$
is an isomorphism for all $\ n < 2\,\rank_\RR(L/\Rad(L))$.
\end{corollary}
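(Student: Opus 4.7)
The plan is to adapt the argument for Corollary~\ref{cor:restriction} by reducing to the adjoint real-algebraic setting via two amenable quotients, and then invoking the version of Theorem~\ref{thm:semi-simple}(i) for general algebraic groups sketched in Remark~\ref{rem:general_groups}.

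Set $V=\lft_0(L/\Gamma)$, a semi-separable coefficient $L$-module (\emph{e.g.}\ via the dual embedding into $L^2_0(L/\Gamma)$). Exactly as in the proof of Corollary~\ref{cor:restriction}, the short exact sequence $0\to\RR\to\lft(L/\Gamma)\to V\to 0$ combined with the induction isomorphism~\eqref{eq:induction} (compare~\cite[10.1.7]{Monod}) reduces the statement to the vanishing of $\hb^n(L;V)$ for all $n<2\,\rank_\RR(L/R)$, where $R=\Rad(L)$.

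Since $R\lhd L$ is solvable, hence amenable,~\cite[8.5.3]{Monod} gives
$$\hb^\bu(L;V)\ \cong\ \hb^\bu(L/R;V^R),$$
with $V^R$ still semi-separable as a weak-$*$ closed submodule of $V$. Setting $S=L/R$, a connected semi-simple Lie group, the center $Z=Z(S)$ is discrete and abelian, hence amenable, so a second application of the same result yields
$$\hb^\bu(S;V^R)\ \cong\ \hb^\bu(S/Z;(V^R)^Z).$$
The adjoint quotient $S/Z$ is precisely the identity component of $\mathbf{H}(\RR)$ for an adjoint semi-simple $\RR$-group $\mathbf{H}$, and by construction $\rank_\RR\mathbf{H}=\rank_\RR(L/R)$.

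We are now in the scope of Remark~\ref{rem:general_groups}: applying the corresponding form of Theorem~\ref{thm:semi-simple}(i), it suffices to check that $(V^R)^Z$ has no non-zero vector fixed by the Borel--Tits canonical normal cocompact subgroup of any almost $\RR$-simple factor of $\mathbf{H}$. If such a vector existed, pulling back along $L\twoheadrightarrow S\twoheadrightarrow S/Z$ and applying Lemma~\ref{lemma:fixed_induced_notcont} with $N\lhd L$ equal to the preimage of the complementary normal subgroup in $\mathbf{H}$ would produce a non-zero continuous $L/N$-submodule of $V$; but the irreducibility of $\Gamma$ ensures that $\Gamma\cdot N$ is dense in $L$ (in direct analogy with Remark~\ref{rem:irreducibility}), forcing that submodule to consist of $\Gamma$-invariant, hence constant, functions on $L/\Gamma$ and thereby contradicting $V=\lft_0(L/\Gamma)$.

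The principal obstacle is this last verification: tracking the irreducibility of $\Gamma<L$ through the two amenable quotients in order to obtain precisely the no-fixed-vector hypothesis required by the algebraic formulation of Theorem~\ref{thm:semi-simple}, and checking that the resulting density statement is genuinely available for arbitrary connected Lie groups rather than only for the semi-simple algebraic case of Remark~\ref{rem:irreducibility}. Once this bookkeeping is in place, the structural argument reduces to that of Corollary~\ref{cor:restriction}.
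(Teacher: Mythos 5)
Your strategy differs from the paper's in an essential way: the paper never pushes the coefficient module down to the adjoint group. Instead it first quotients $L$ by the product $R$ of $\Rad(L)$ \emph{and} the compact part of $L/\Rad(L)$ (using inflation isomorphisms for amenable kernels and Auslander--Wang to know $\Gamma/(\Gamma\cap R)$ is a lattice in $L/R$), and then goes \emph{up}: it writes $L/R=\GG(\RR)^+$, passes to the algebraic universal cover $\ti\GG$, uses $\ti\GG(\RR)^+=\ti\GG(\RR)$ (Cartan/Kneser--Tits) to get a finite cover $\ti L\to L/R$, and applies Corollary~\ref{cor:restriction} to $\ti L$ as a black box, transporting the conclusion back through a second pair of inflation isomorphisms. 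Your route of reducing to vanishing of $\hb^n(L;\lft_0(L/\Gamma))$ and descending to the adjoint quotient is a reasonable alternative outline (and has the minor virtue of not needing Auslander's theorem), but as written it has genuine gaps.

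The first gap is that Remark~\ref{rem:general_groups} is a sketch, not a proved statement: to actually run the vanishing argument for your adjoint group $\mathbf H$ one must pass to the simply connected cover and reformulate the fixed-vector hypothesis in terms of the canonical cocompact normal subgroups $\mathbf H_i(\RR)^+$ --- which is exactly the covering step the paper carries out explicitly and which your proposal never performs. Moreover $S/Z$ is only the Hausdorff identity component of $\mathbf H(\RR)$, not the full group of rational points contemplated in that remark, so even the identification you invoke requires an extra (finite-index, hence again inflation-type) argument; and you do not dispose of the compact factors of $L/\Rad(L)$, which must be absorbed as in Remark~\ref{rem:compact_factors} before the rank count $\rank_\RR\mathbf H=\rank_\RR(L/\Rad(L))$ feeds into Theorem~\ref{thm:semi-simple}. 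The second gap is the one you flag yourself: the verification that $(V^R)^Z$ has no vectors fixed by the relevant cocompact subgroups is precisely the content of the proof, not bookkeeping to be deferred. The argument you sketch for it is also slightly off --- the subgroup $N\lhd L$ to which the \emph{ad hoc} irreducibility should be applied is the preimage of the (non-amenable, since isotropic) factor that \emph{fixes} the putative vector, not of its complement, and the contradiction comes from the elementary fact that an $N$-invariant element of $\lft(L/\Gamma)$ with $\Gamma\cdot N$ dense is essentially constant (the ``trivial case of Lemma~\ref{lemma:fixed_induced_notcont}'' used in the proof of Corollary~\ref{cor:restriction}). Until these points are filled in, the proposal is an outline rather than a proof; the paper's double inflation--restriction square avoids all of them by keeping the coefficients trivial and moving only between groups with amenable or finite kernels.
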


\noindent
(For the sake of this statement, we understand \emph{irreducible} in the \emph{ad hoc} acception that $\Gamma\cdot N$ be dense in $L$ whenever $N\lhd L$ is non-amenable.)

\begin{proof}
We first assume that $L$ is a connected semi-simple Lie group with finite centre and no compact factors (this is the case needed for Corollary~\ref{cor:spaces}). Then there is a connected semi-simple $\RR$-group $\GG$ such that $L=\GG(\RR)^+$, see~\cite[6.14]{Borel-Tits73}. Let $\pi: \ti\GG\to\GG$ be an algebraic universal cover over $\RR$, so that $\ti \GG$ is connected and simply connected (for general existence, see \emph{e.g.}~\cite[2.10]{Platonov-Rapinchuk}). Then $\GG(\RR)^+ = \pi(\ti\GG(\RR)^+)$ by~\cite[6.3]{Borel-Tits73}. On the other hand, $\ti\GG(\RR)^+ = \ti\GG(\RR)$; indeed, this is a special case of the Kneser--Tits problem but was already known to \'E.~Cartan~\cite{ECartan27}, see~\cite[4.8]{Borel-Tits72}. In any case, setting $\ti L=\ti\GG(\RR)$, we have a finite cover $\pi:\ti L \to L$ and consider the lattice $\pi^{-1}(\Gamma)$ in $\ti L$. The corresponding restriction and inflation maps form a commutative square
$$\xymatrix{
\hb^n(L;\RR) \ar[rr]^{\mathrm{rest}} \ar[d]^{\mathrm{infl}} &&\hb^n(\Gamma;\RR) \ar[d]^{\mathrm{infl}}\\
\hb^n(\ti L;\RR) \ar[rr]^{\mathrm{rest}} &&\hb^n(\ti \Gamma;\RR)
}$$
Both inflation maps are isomorphisms since the kernel of $\pi$ is finite~\cite[\S8.5]{Monod}.  By Corollary~\ref{cor:restriction}, the lower restriction map is an isomorphism when $n< 2\,\rank_\RR(\ti L)$. This finishes this case since $\Rad(L)$ is trivial and the real rank of $L$ is $\rank_\RR(\ti\GG)$.

We now consider the general case. Let $R\lhd L$ be the product of the radical $\Rad(L)$ and the compact part of the semi-simple group $L/\Rad(L)$. Then $L/R$ is as in the first case and has same real rank as $L/\Rad(L)$. Moreover, Auslander's theorem~\cite{Auslander63} implies that the group $\underline\Gamma = \Gamma/(\Gamma\cap R)$ is still a lattice in $L/R$ (in this generality it is due to H.-C.~Wang~\cite{Wang63}; see~\cite[8.27]{Raghunathan}). We have again a commutative diagram
$$\xymatrix{
\hb^n(L/R;\RR) \ar[rr]^{\mathrm{rest}} \ar[d]^{\mathrm{infl}} &&\hb^n(\underline\Gamma;\RR) \ar[d]^{\mathrm{infl}}\\
\hb^n(L;\RR) \ar[rr]^{\mathrm{rest}} &&\hb^n(\Gamma;\RR)
}$$
The inflation maps are still isomorphisms because $R$ and $\Gamma\cap R$ are amenable~\cite[\S8.5]{Monod}. This time, the upper restriction map is an isomorphism by the first case, concluding the proof.
\end{proof}

\begin{proof}[Proof of Corollary~\ref{cor:spaces}]
Let $X$ be as in the statement and $\ti X$ its universal cover. Since $X$ is ample, its fundamental group $\Gamma=\pi_1(X)$ is a lattice in the connected Lie group $L=\Is(\ti X)^\circ$. We recall the following facts (see \emph{e.g.}~\cite{Helgason01}): $L$ is semi-simple with finite centre and without compact factors; $\Gamma$ is irreducible in $L$; the real rank of $L$ is the geometric rank of $X$. Therefore, we can apply Corollary~\ref{cor:restriction_R}. The result now follows from the fact that there is an isomorphism $\hb^\bu(X)\cong \hb^\bu(\Gamma;\RR)$. The latter admits a direct proof in the present case since $\widetilde{X}$ is contractible, though it can also be seen as a trivial case of a much deeper result of M.~Gromov~\cite[p.~40]{Gromov}.
\end{proof}

\begin{proof}[Proof of Corollary~\ref{cor:lattice_semisimple}]
Suppose $\hb^n(\Gamma; W)\neq 0$ for some $n <  2\,\rank_k(\GG)$. In view of the induction isomorphism~\eqref{eq:induction}, we have $\hb^n(G;V)\neq 0$ and thus Theorem~\ref{thm:semi-simple} with Remark~\ref{rem:irreducibility} implies that there is $N\lhd G$ with $V^N\neq 0$ and $\Gamma\cdot N$ is dense in $G$. Thus Lemma~\ref{lemma:fixed_induced_notcont} finishes the proof.
\end{proof}

\section{Product groups}\label{sec:products}
There is of course no (non-trivial) Tits building structure for general groups. For products of arbitrary groups $G_i$, however, we will use a very basic variant of the idea of Tits building. The idea is to consider probability $G_i$-spaces $B_i$ as zero-dimensional simplicial complexes endowed with a measure class structure; then the join of these complexes is a high-dimensional simplicial complex, also endowed with a measure class.
\subsection{Joins}\label{sec:joins}
Let $\ B_1, \ldots, B_r$ be standard probability spaces. For any subset $I\se \{1, \ldots, r\}$ we define the probability space $B_I=\prod_{i\in I} B_i$, with $B_\vn= \{\vn\}$. Further, define the finite measure spaces
$$B^{(p)}\ =\ \bigsqcup_{|I|=p+1} B_I, \kern10mm -1\leq p\leq r-1.$$
For $I=\{i_0 < \ldots <i_p\}$ and $0\leq j\leq p$ we consider the canonical factor map $B_I\to B_{I\sm \{i_j\}}$; one obtains thus maps $\partial_{p,j}: B^{(p)}\to B^{(p-1)}$. Let now $V$ be the dual of a separable Banach space; the maps $\partial_{p,j}$ induce bounded linear maps
$$d_{p,j}: \lw(B^{(p-1)};V) \lra  \lw(B^{(p)};V), \kern10mm 0\leq j\leq p\leq r-1.$$
We define $d_p=\sum_{j=0}^p (-1)^j d_{p,j}$ and observe $\lw(B^{(-1)};V)\cong V$.

\begin{remark}\label{rem:duality}
Let $V^\flat$ be a predual of $V$ and recall that $\lw(B^{(p)};V)$ is the dual of $L^1(B^{(p)};V^\flat)$. If we still denote by $\partial_{p,j}$ the maps $L^1(B^{(p)};V^\flat) \to L^1(B^{(p-1)};V^\flat)$, then $d_{p,j}$ is the adjoint of $\partial_{p,j}$ and in particular each $d_p$ is adjoint.
\end{remark}

If we were dealing with abstract simplicial complexes, then it would be a standard fact that the cohomology of $B^{(\bu)}$ vanishes in degrees other than $0$ and $r-1$, because of standard formulas for joins (see e.g.~\cite{Munkres}). In our setting, one has to take both measurability and almost everywhere identifications into account; resorting to explicit calculation, we find that this creates no difficulties:

\begin{lemma}\label{lemma:join}
Let $\ B_1, \ldots, B_r$ be standard probability spaces and $V$ the dual of a separable Banach space. Then the sequence
$$0 \lra  V \lra \lw(B^{(0)};V) \lra \cdots \lra \lw(B^{(r-1)};V)$$
is exact.
\end{lemma}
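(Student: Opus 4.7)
The plan is to argue by induction on~$r$. The base case $r=1$ reduces to the injectivity of the constant-function embedding $V \to \lw(B_1; V)$, which is immediate.

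For the inductive step, I write $B' = (B_1, \ldots, B_{r-1})$ and decompose each term of the complex according to whether the index~$r$ appears in a given face. Setting $A^{(p)} = \lw(B'^{(p)}; V)$ and $C^{(p)} \cong \lw(B_r; A^{(p-1)})$, one obtains $\lw(B^{(p)}; V) = A^{(p)} \oplus C^{(p)}$ (with the convention $A^{(-1)} = V$ and $C^{(-1)} = 0$). A direct inspection of the face maps $\partial_{p,j}$ shows that the differential assumes the cone-like shape
\[
\delta(a, c) \ =\ \bigl(\delta_A a,\ \delta_A c + (-1)^{p+1} \iota(a)\bigr)\quad\text{for }(a, c) \in A^{(p)} \oplus C^{(p)},
\]
where $\iota : A^{(p)} \to C^{(p+1)}$ is the embedding as $B_r$-constants. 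The inductive hypothesis applied to $B'$ then provides exactness of the $A^{(\bullet)}$-complex in every degree $\leq r-3$.

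The crucial computation is the following. For a cocycle $(a, c)$ in degree $p \leq r-2$, vanishing of the second component of $\delta(a, c)$ yields $\delta_A c(b_r) = (-1)^p a$ for almost every~$b_r \in B_r$; integrating over $B_r$ gives $\delta_A \bar c = (-1)^p a$, where $\bar c := \int_{B_r} c\, d\mu_r \in A^{(p-1)}$. Thus $a$ is automatically a coboundary, via $\bar a := (-1)^p \bar c$, while the residue $c(b_r) - \bar c$ is a pointwise cocycle in $A^{(p-1)}$ with zero mean in~$b_r$. Since $p - 1 \leq r-3$, the inductive hypothesis furnishes a primitive of $c(b_r) - \bar c$ under $\delta_A$ for almost every~$b_r$, so the putative cobounding element is $(\bar a, c'')$ with $\delta_A c''(b_r) = c(b_r) - \bar c$.

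The main obstacle, as I anticipate, is to assemble these pointwise primitives into a bona fide element $c'' \in \lw(B_r; A^{(p-2)})$. I would handle this measurable selection issue by passing to the predual $L^1$ version of the statement and invoking the closed range theorem: since $L^1(B_r; -)$ preserves short exact sequences of Banach spaces (by means of a Bartle--Graves continuous bounded section), the predual chain complex is exact in the same range through precisely the above inductive scheme, now carried out with Bochner-measurable selections; dualising then returns the desired weak-$*$ exactness with closed ranges.
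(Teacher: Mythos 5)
Your route is genuinely different from the paper's. The paper makes no induction on $r$: it writes down an explicit contracting homotopy $s_p=\sum_{j}(-1)^js_{p,j}$, where $s_{p,j}$ re-inserts an index by integrating over the corresponding $B_i$, and verifies by direct computation the identity $d_p\,s_p+s_{p+1}\,d_{p+1}=r\cdot\id$ on $\lw(B^{(p)};V)$ for $-1\leq p\leq r-2$. Because the homotopy consists of (adjoint, hence weak-$*$ continuous) integration operators, no measurability question ever arises. Your mapping-cone decomposition $\lw(B^{(p)};V)=A^{(p)}\oplus\lw(B_r;A^{(p-1)})$, the sign bookkeeping, and the observation that integrating the $C$-component of the cocycle identity over $B_r$ exhibits $a$ as $\delta_A\bigl((-1)^p\bar c\bigr)$ are all correct, and they display the join structure more conceptually than the paper's computation does; what this buys you is offset by the selection problem you correctly identify as the crux.

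That last step is where your argument is not yet a proof. The function $b_r\mapsto c(b_r)-\bar c$ is only weak-$*$ measurable, so composing it with a Bartle--Graves section of $\delta_A$ (which is norm-continuous but neither linear nor weak-$*$ continuous) does not visibly produce an element of $\lw(B_r;A^{(p-2)})$. Your predual detour can be made to work, but not ``through precisely the above inductive scheme'': in $L^1(B^{(\bu)};V^\flat)$ the complex runs the other way, the roles of the two cone components are interchanged (the fibrewise primitives are now taken for the $C$-part, and the residual $A$-cycle is killed by the \emph{constant} extension over $B_r$ rather than by integration), and you must track closed ranges at every stage for dualisation to return exactness; none of this is fatal, but it all has to be written out. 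A cleaner repair stays inside your own induction: strengthen the inductive hypothesis from ``the $A$-complex is exact'' to ``the $A$-complex admits a contracting homotopy $s_A$ built from integration operators''. Then $c''(b_r):=\tfrac{1}{r-1}\,s_A\bigl(c(b_r)-\bar c\bigr)$ is obtained by applying a fixed bounded weak-$*$ continuous operator to a weak-$*$ measurable function, hence lies in $\lw(B_r;A^{(p-2)})$ with no selection theorem needed. Unwinding that strengthened induction essentially reconstructs the paper's explicit homotopy.
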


\begin{proof}
We consider elements $f\in \lw(B^{(p)};V)$ as families $(f_I)$ where $f_I\in \lw(B_I;V)$ and $|I|=p+1$. Define
$$s_{p,j}: \lw(B^{(p)};V) \lra  \lw(B^{(p-1)};V),\kern10mm 0\leq j\leq p\leq r-1,$$
as follows:
$$(s_{p,j} f)_J\ =\ \sum_{i_{j-1} < i < i_j} \int_{B_i} f_{J\cup\{i\}},\kern10mm\text{for } J=\{i_0, \ldots, i_{p-1}\}.$$
It is understood in the above formula that the summation bounds $i_{j-1} < i < i_j$ reduce to $i<i_0$ when $j=0$, to $i_{p-1}<i$ when $j=p$ and impose no restriction when $j=p=0$. We define $s_p=\sum_{j=0}^p (-1)^j s_{p,j}$ and claim that
\begin{equation}\label{eq:homotop}
d_p\, s_p + s_{p+1}\, d_{p+1} = r\cdot\id \kern5mm\text{on } \lw(B^{(p)};V), \kern10mm -1\leq p\leq r-2,
\end{equation}
with the convention that $s_{-1}$ and $d_{-1}$ are zero. In order to verify the claim, one checks the following relations by direct calculation.
\begin{align*}
d_{p,k}\, s_{p,j} &=  s_{p+1,j}\, d_{p+1,k+1}\kern5mm&\text{for } &0\leq j\leq k-1\leq p,\\
d_{p,k}\, s_{p,k} &=  s_{p+1,k}\, d_{p+1,k+1} + s_{p+1,k+1}\, d_{p+1,k} +\id  \kern5mm&\text{for } &0\leq k\leq p,\\
d_{p,k}\, s_{p,j} &=  s_{p+1,j+1}\, d_{p+1,k}\kern5mm&\text{for } &0\leq k+1\leq j\leq p.
\end{align*}
In addition, one verifies
$$\sum_{k=0}^{p+1} s_{p+1, k}\, d_{p+1, k}\ =\ (r-(p+1))\cdot\id \kern5mm\text{on } \lw(B^{(p)};V).$$
Putting everything together yields indeed~\eqref{eq:homotop} (the case $p=-1$ is immediate). This claim finishes the proof of the lemma.
\end{proof}

In order to gain one more degree in Theorem~\ref{thm:splitting}, we shall need the following information on the end of the exact sequence of Lemma~\ref{lemma:join}.

\begin{lemma}\label{lemma:closed}
The map $d_{r-1}: \lw(B^{(r-2)};V) \to \lw(B^{(r-1)};V)$ has closed range.
\end{lemma}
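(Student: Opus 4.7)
The plan is to exploit the explicit contracting homotopy already constructed in the proof of Lemma~\ref{lemma:join}. That argument produced bounded operators $s_p$ satisfying
$$d_p\, s_p + s_{p+1}\, d_{p+1} = r\cdot\id \quad \text{on } \lw(B^{(p)};V), \qquad -1 \le p \le r-2.$$
The formula for $p=r-2$ is exactly the ingredient needed at the top of the sequence.

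First I would note that $\{d_p\}$ is an actual cochain complex, i.e.\ $d_{p+1}\circ d_p = 0$; this follows from the simplicial identities $\partial_{p-1,i}\partial_{p,j}=\partial_{p-1,j-1}\partial_{p,i}$ (for $i<j$), which hold at the level of canonical projections between the product spaces $B_I$ and then pass to pull-backs. In particular $d_{r-1}\circ d_{r-2}=0$. Composing the homotopy identity at $p=r-2$ on the left with $d_{r-1}$ and cancelling the term $d_{r-1}d_{r-2}s_{r-2}=0$, I obtain
$$d_{r-1}\, s_{r-1}\, d_{r-1} = r\cdot d_{r-1}$$
as bounded operators $\lw(B^{(r-2)};V)\to\lw(B^{(r-1)};V)$.

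Equivalently, setting $T:=\frac{1}{r}\,d_{r-1}\circ s_{r-1}$, which is a bounded linear operator on $\lw(B^{(r-1)};V)$, every element $g$ of $\mathrm{range}(d_{r-1})$ satisfies $Tg=g$. Since $T$ is continuous, this identity extends to the norm-closure of $\mathrm{range}(d_{r-1})$. But $T$ takes values in $\mathrm{range}(d_{r-1})$ by construction, so $g=Tg\in \mathrm{range}(d_{r-1})$ for every $g\in\overline{\mathrm{range}(d_{r-1})}$. Hence the range of $d_{r-1}$ is closed.

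There is no real obstacle here: given the homotopy formula, closed range follows from a formal manipulation. The only thing to be checked carefully is that the operator $T$ is bounded, which is immediate from the boundedness of $s_{r-1}$ and $d_{r-1}$, and that $\frac{1}{r}s_{r-1}$ provides an explicit continuous right inverse of $d_{r-1}$ on its range (which is a slightly stronger statement).
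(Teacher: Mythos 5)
Your argument is correct: composing the homotopy identity at $p=r-2$ with $d_{r-1}$ and using $d_{r-1}d_{r-2}=0$ (which you can also read off directly from the exactness in Lemma~\ref{lemma:join}) gives $d_{r-1}s_{r-1}d_{r-1}=r\cdot d_{r-1}$, so $T=\tfrac1r d_{r-1}s_{r-1}$ is a bounded idempotent whose range is exactly $\mathrm{range}(d_{r-1})$, which is therefore closed. The paper takes a different route: following Remark~\ref{rem:duality}, each $d_p$ is the adjoint of the map $\partial_p$ on the $L^1$ predual complex, so by the closed range theorem it suffices to show that $\partial_{r-1}\colon L^1(B^{(r-1)};V^\flat)\to L^1(B^{(r-2)};V^\flat)$ has closed range; in top degree the components $\partial_{r-1,j}$ land in the $r$ distinct summands of $L^1(B^{(r-2)};V^\flat)$ and are each surjective, hence open, exactly as in the proof of Theorem~\ref{thm:complexe:lw}. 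Your approach avoids the closed range theorem and the predual picture entirely, reuses machinery already built for Lemma~\ref{lemma:join}, and proves slightly more, namely that $\mathrm{range}(d_{r-1})$ is complemented and that $\tfrac1r s_{r-1}$ is a bounded linear right inverse on the range; the paper's approach buys brevity and uniformity with the earlier argument for the Tits building, where no explicit contracting homotopy is available and dualizing to $L^1$ is the natural move.
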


\begin{proof}
We argue as for Theorem~\ref{thm:complexe:lw}, now using the notation of Remark~\ref{rem:duality}: By the closed range theorem, it suffices to prove that $\partial_{r-1}: L^1(B^{(r-1)};V^\flat) \to L^1(B^{(r-2)};V^\flat)$ is open. Since we are in the top degree $r-1$, this map is the direct sum of \emph{surjective} maps $\partial_{r-1, j}$. The latter are open by the open mapping theorem.
\end{proof}

\subsection{Ergodicity with coefficients}\label{sec:ergodic}
Let $G$ be a locally compact second countable group and $X$ a standard probability space with a measurable $G$-action (thus $G$ preserves the measure class but not necessarily the measure). Recall that the action is \emph{ergodic} if and only if every $G$-invariant measurable map $X\to\RR$ is essentially constant. In this definition, one can of course replace $\RR$ with several other spaces, for instance any separable Banach space, since only the Borel structure is of relevance. This suggests the following definition proposed by M.~Burger and the author:

\begin{definition}[\cite{Burger-Monod3,Monod}]\label{defi:double}
The $G$-action on $X$ is \emph{ergodic with coefficients} if every $G$-equivariant measurable map $X\to U$ to any separable coefficient $G$-module $U$ is essentially constant.

\noindent
(Equivalently, is it enough to consider only maps that are bounded.)
\end{definition}

We recall for the above definition that on separable dual Banach spaces the strong, weak and {weak-$*$} Borel structures all coincide (see \emph{e.g.}~\cite[3.3.3]{Monod}).

\begin{lemma}\label{lemma:ergodic}
Let $V$ be a semi-separable coefficient module. If the $G$-action on $X$ is ergodic with coefficients, then one has an identification $\lw(X;V)^G\cong V^G$.
\end{lemma}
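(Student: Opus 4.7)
The map $V^G \hookrightarrow \lw(X;V)^G$ given by $v\mapsto(\text{constant function }v)$ is well-defined and isometric, so the content of the statement is surjectivity. Fix therefore $f\in\lw(X;V)^G$; unwinding the representation~\eqref{eq:regular}, this means that $f:X\to V$ is a weak-$*$ measurable bounded map satisfying $f(gx)=\pi(g)f(x)$ for almost every $(g,x)$.

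The plan is to transfer the ergodicity-with-coefficients hypothesis from the separable coefficient module $U$ provided by Definition~\ref{defi:semi-separable} to the semi-separable module $V$ via the injective dual $G$-morphism $i:V\to U$. Being adjoint, $i$ is in particular weak-$*$ continuous, so the composition $i\circ f:X\to U$ is weak-$*$ measurable and $G$-equivariant. Since $U$ is a separable dual, its strong, weak and weak-$*$ Borel structures coincide (see \emph{e.g.}~\cite[3.3.3]{Monod}), so $i\circ f$ is a Borel measurable $G$-equivariant map to the separable coefficient module $U$. By Definition~\ref{defi:double}, it is essentially constant, with some value $u_0\in U$.

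Now $i$ is injective, so for almost every pair $x,x'\in X$ the equality $i(f(x))=u_0=i(f(x'))$ forces $f(x)=f(x')$; thus $f$ is essentially constant in $V$, with some value $v_0$ satisfying $i(v_0)=u_0$. The $G$-equivariance of $f$ then yields $\pi(g)v_0=v_0$ for every $g\in G$, so $v_0\in V^G$, and $f$ is the image of $v_0$ under the canonical inclusion $V^G\to \lw(X;V)^G$. The main point to be careful about is the interplay between weak-$*$ measurability and the composition with $i$, which is handled precisely by the fact that $i$ is adjoint and that $U$ is a separable dual.
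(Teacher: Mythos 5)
Your proposal is correct and follows essentially the same route as the paper: compose with the injective adjoint morphism $V\to U$ into a separable coefficient module, use ergodicity with coefficients to see that the resulting equivariant map to $U$ is essentially constant, and pull the constant back through injectivity to get an invariant vector in $V$. The paper's proof is just a more compressed version of this same argument.
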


\begin{proof}
Let $V\to U$ be an injective adjoint $G$-morphism into a separable coefficient module $U$. In particular, this map is {weak-$*$} continuous and thus induces an equivariant injection of $\lw(X;V)$ into $\lw(X;U)=\lft(X;U)$. This implies already that all elements of $\lw(X;V)^G$ are essentially constant; but their essential value is invariant by equivariance.
\end{proof}

\begin{proposition}\label{prop:ind}
Let $G=G'\times G''$ be a product of locally compact second countable groups, let $X$ be a standard probability $G''$-space and let $V$ be a semi-separable coefficient $G$-module. If the $G''$-action on $X$ is both ergodic with coefficients and amenable, then there is a natural isomorphism
$$\hb^\bu(G; \lw(X;V))\ \cong\ \hb^\bu(G'; V^{G''}).$$
\end{proposition}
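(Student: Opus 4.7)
The plan is to build an explicit relatively injective $G$-resolution of $\lw(X;V)$ whose $G$-invariants complex coincides, term by term and differential by differential, with the standard complex computing $\hb^\bu(G';V^{G''})$ via an amenable $G'$-space.

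First I would choose any amenable standard probability $G'$-space $Y$ (for instance $Y=G'$ itself with a probability measure in the Haar class) and view it as a $G$-space by letting $G''$ act trivially. With this trivial $G''$-action $Y$ is still amenable as a $G$-space, since amenability of an action passes to the product with a trivially acting factor. Consequently the coefficient $G$-modules
$$\lw(Y^{k+1};\lw(X;V))\;\cong\;\lw(Y^{k+1}\times X;V),\qquad k\ge 0,$$
are all relatively injective, and together with the constant augmentation $\lw(X;V)\to\lw(Y;\lw(X;V))$ and the usual alternating face-sum differentials on the $Y$-variables they fit into a strong resolution of $\lw(X;V)$ by relatively injective coefficient $G$-modules. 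In particular, $\hb^n(G;\lw(X;V))$ is the cohomology of the complex of $G$-invariants of this resolution.

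Next I would rewrite those invariants. Since $G'$ and $G''$ commute, $G$-invariants are the $G'$-invariants of the $G''$-invariants. The $G''$-action is trivial on $Y$, so a class $f\in\lw(Y^{k+1};\lw(X;V))$ is $G''$-invariant precisely when $f(y_0,\ldots,y_k)\in\lw(X;V)^{G''}$ for almost every tuple. The hypotheses allow me to invoke Lemma~\ref{lemma:ergodic}: since $V$ is semi-separable and the $G''$-action on $X$ is ergodic with coefficients, one has $\lw(X;V)^{G''}=V^{G''}$. Embedding $V^{G''}$ as constants then yields a natural isomorphism of $G'$-modules
$$\big(\lw(Y^{k+1};\lw(X;V))\big)^{G''}\;\cong\;\lw(Y^{k+1};V^{G''}),$$
which is manifestly compatible with the simplicial differentials on both sides.

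To conclude, I would observe that $V^{G''}$ is a weak-$*$ closed $G$-submodule of $V$, hence is itself a semi-separable coefficient $G'$-module, while $Y$ is by construction an amenable $G'$-space. The standard homological theory then identifies the cohomology of $\lw(Y^{\bullet+1};V^{G''})^{G'}$ with $\hb^\bu(G';V^{G''})$, delivering the desired natural isomorphism. The step I expect to be the most delicate is the pointwise-in-$Y^{k+1}$ application of Lemma~\ref{lemma:ergodic} to essentially bounded classes of maps in the $G''$-invariants computation; this is precisely where the semi-separability of $V$ (and, through Lemma~\ref{lemma:semi-separable}, of $\lw(X;V)$) becomes indispensable, and correspondingly why the hypothesis cannot be relaxed.
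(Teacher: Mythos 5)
Your overall strategy --- resolve $\lw(X;V)$ by the modules $\lw(Y^{\bu+1}\times X;V)$, compute the $G$-invariants in two stages, and use Lemma~\ref{lemma:ergodic} to collapse $\lw(X;V)^{G''}$ to $V^{G''}$ --- is exactly the paper's argument (which takes $Y=G'$). However, the step in which you justify relative injectivity contains a genuine error. You claim that $Y$, endowed with the trivial $G''$-action, is an amenable $G$-space because ``amenability of an action passes to the product with a trivially acting factor.'' This is false whenever $G''$ is non-amenable: amenability of a $G$-action restricts to closed subgroups, so an amenable $G$-action on a probability space on which $G''$ acts trivially would make the trivial $G''$-action amenable and hence force $G''$ itself to be amenable. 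The tell-tale symptom is that your proof, as written, never uses the hypothesis that the $G''$-action on $X$ is amenable; but without that hypothesis the proposition is false (a one-point space $X$ is ergodic with coefficients, and the asserted isomorphism would then read $\hb^\bu(G;V)\cong\hb^\bu(G';V^{G''})$, which already fails for $G'=1$ and $G''$ non-amenable).

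The correct justification, and precisely where the amenability of $X$ enters, is that the relevant space is not $Y^{k+1}$ alone but $Y^{k+1}\times X$: it is the product of the amenable $G'$-space $Y^{k+1}$ and the amenable $G''$-space $X$, hence an amenable $G$-space by the fact recalled at the beginning of the subsection on products. With that substitution the modules $\lw(Y^{k+1}\times X;V)\cong\lw(Y^{k+1};\lw(X;V))$ are indeed relatively injective, and the remainder of your computation of invariants (the Fubini-type identification of the $G''$-invariants and the appeal to Lemma~\ref{lemma:ergodic}) goes through as in the paper. A minor further remark: Lemma~\ref{lemma:ergodic} only requires semi-separability of $V$ itself, so the appeal to Lemma~\ref{lemma:semi-separable} for $\lw(X;V)$ in your closing paragraph is not needed here.
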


\begin{proof}
Define the coefficient $G$-module $U_n= \lw({G'}^{n+1}; \lw(X;V))$ and consider the sequence
$$0 \lra \lw(X;V) \lra U_0 \lra U_1 \lra \cdots$$
with the usual homogeneous coboundary maps.  Since the $G$-action on ${G'}^{n+1}\times X$ is amenable and $U_n\cong \lw({G'}^{n+1}\times X;V)$ (\cite[2.3.3]{Monod}), each $U_n$ is a relatively injective $G$-module and therefore the complex
\begin{equation}\label{eq:resol}
0 \lra U_0^G \lra U_1^G \lra \cdots \lra U_n^G\lra\cdots
\end{equation}
realizes $\hb^\bu(G; \lw(X;V))$. On the other hand, we have
$$U_n^G\ =\ \lw({G'}^{n+1}; \lw(X;V)^{G''})^{G'} \ =\ \lw({G'}^{n+1}; V^{G''})^{G'}$$
by Lemma~\ref{lemma:ergodic}. Therefore, the complex~\eqref{eq:resol} can also be written as the non-augmented complex of $G'$-invariants of the familiar resolution
$$0 \lra V^{G''} \lra \lw({G'}; V^{G''}) \lra \lw({G'}^2; V^{G''}) \lra\cdots$$
used to compute $\hb^\bu(G'; V^{G''})$.
\end{proof}

The existence of (non-trivial) $G$-spaces that are ergodic with coefficients is not obvious; for instance, the multiplication action on $G$ itself is not ergodic with coefficients unless $G=1$. The following result is therefore useful.

\begin{theorem}[\cite{Burger-Monod3,Kaimanovich03}]\label{thm:double}
For every locally compact second countable group $G$ there exist standard probability spaces $B, B^-$ with amenable $G$-actions such that the diagonal $G$-action on $B\times B^-$ is ergodic with coefficients. Furthermore, one can choose $B$, $B^-$ isomorphic.\hfill\qedsymbol
\end{theorem}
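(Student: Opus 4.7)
The plan is to realise $B$ and $B^-$ as Poisson--Furstenberg boundaries of a pair of mutually reversed random walks on $G$. Fix a spread-out admissible probability measure $\mu$ on $G$, that is, a measure whose support generates $G$ and some convolution power of which is absolutely continuous with respect to Haar measure. Choose $\mu$ moreover to be \emph{symmetric} in the sense that $\check\mu = \mu$, where $\check\mu(E)=\mu(E^{-1})$; such a $\mu$ exists by convolving any spread-out admissible measure with its reflection. Let $B$ be the Poisson boundary of $(G,\mu)$ endowed with the harmonic (hitting) measure. Since $\check\mu=\mu$, the Poisson boundary of the reversed walk $(G,\check\mu)$ is canonically $G$-equivariantly isomorphic to $B$, so the choice $B^-=B$ meets the isomorphism requirement in the statement.

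The first property to verify is amenability of the $G$-action on $B$ (and on $B^-$). This is Zimmer's classical theorem: the Poisson boundary of a spread-out random walk on a locally compact second countable group is always an amenable $G$-space in the sense of~\cite{Zimmer84}. Nothing in this step is specific to the situation at hand.

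The second, harder property is ergodicity with coefficients of the diagonal $G$-action on $B\times B^-$ as defined in Definition~\ref{defi:double}. Let $U$ be any separable coefficient $G$-module and $f: B\times B^-\to U$ an essentially equivariant measurable map; we must show that $f$ is essentially constant. The strategy, following Kaimanovich~\cite{Kaimanovich03}, is to lift $f$ to the bilateral path space $(G^{\ZZ},\PP)$ of the two-sided $\mu$-walk: composing $f$ with the pair of boundary maps (forward rays landing in $B$, backward rays landing in $B^-$) yields a $\PP$-almost surely defined $U$-valued function on $G^{\ZZ}$ which is invariant under both the forward and the backward one-step shifts along the walk. By a vector-valued martingale convergence argument applied to the conditional expectations with respect to the bilateral tail, invariance under these two shifts forces the function to be essentially constant; equivariance of $f$ then forces its essential value in $U$ to be $G$-fixed, hence to be the single value inherited from $U^G$, which in turn forces $f$ to be constant (and that constant is in $U^G$).

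The main obstacle is precisely this last step: upgrading ordinary double ergodicity of $B\times B^-$ (which is classical) to ergodicity with values in an arbitrary separable coefficient Banach module. Scalar $L^2$ spectral arguments do not apply, because the representation on $U$ need not be unitarisable and $U$ need not even be reflexive. This is handled by keeping the analysis at the level of $U$-valued bounded harmonic functions on $G$ and invoking Kaimanovich's strip/ray approximation criterion together with Bochner-type martingale convergence, which is available because $U$ is the dual of a separable Banach space and hence its bounded measurable functions integrate in the Gelfand--Dunford sense recorded in Section~\ref{sec:complements_modules}. With ergodicity with coefficients thus established, $B$ and $B^-=B$ satisfy all three conclusions of the theorem.
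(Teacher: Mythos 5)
Your proposal is correct in outline, but it follows the second of the two sources cited in the theorem's header (Ka{\u\i}manovich) rather than the route the paper itself sketches. The paper's own argument --- given only for finitely generated $G$, following Burger--Monod --- is quite different: write $G$ as a quotient of a non-Abelian free group $F$ with kernel $N$, realise $F$ as a lattice in $H=\SL_2(\RR)$, take $B$ and $B^-$ to be the spaces of $N$-ergodic components of the flag varieties $H/P$ and $H/P^-$, and prove ergodicity with coefficients for the $H$-action on $H/P\times H/P^-$ by the Bruhat decomposition and the Mautner phenomenon (as in Section~\ref{sec:semi-separable}), transferring it first to $F$ by inducing an equivariant map into $L^2(H/F;U)$ and then to $G$ by passing to ergodic components. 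That argument is algebraic and self-contained given the tools already in the paper, but as written it only covers finitely generated (resp.\ compactly generated) groups. Your Poisson-boundary route is the one that actually delivers the full statement for arbitrary locally compact second countable $G$, and taking $\mu$ symmetric so that the backward boundary coincides with the forward one is precisely how one arranges $B^-=B$. In short: your approach buys generality and matches the attribution to~\cite{Kaimanovich03}; the paper's sketch buys concreteness (explicit boundaries, no path-space probability) at the price of a restriction on $G$. Amenability via~\cite{Zimmer78b} is common to both.

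Two small corrections that do not affect the validity of the approach. First, convolving a non-degenerate measure with its reflection need not preserve non-degeneracy (on $\ZZ$, $\delta_1\ast\check\delta_1=\delta_0$); to get a symmetric spread-out non-degenerate $\mu$ simply take one with a symmetric, everywhere positive density with respect to Haar measure. Second, the tool driving the bilateral path-space step is not the strip/ray approximation criterion (that serves to identify Poisson boundaries with geometric ones) but the conditional independence of past and future given the time-zero position, combined with martingale convergence for $U$-valued functions; this is available because $U$ is a separable dual, so its weak-$*$ and norm Borel structures coincide and the relevant conditional expectations make sense.
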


We indicate below a short proof in the special case where $G$ is finitely generated (and hence discrete). This proof follows the original argument provided in~\cite{Burger-Monod3} for compactly generated locally compact groups; it has the peculiar feature to reduce a general statement for arbitrary finitely generated groups to classical properties of the group $\SL_2(\RR)$. The different and more general proof later provided by V.~Ka{\u\i}manovich in~\cite{Kaimanovich03} shows that in fact one can take for $B$ the \emph{Poisson} (or Poisson--Furstenberg, or ghoti) boundary of any ``spread out, non-degenerate'' random walk and for $B^-$ the boundary of the backward random walk. In any case, one may always arrange $B^-=B$, whence the terminology of \emph{double ergodicity} used in~\cite{Burger-Monod3}. The amenability of such actions was established in~\cite{Zimmer78b}.

\smallskip

Let thus $G$ be a finitely generated group. There is an epimorphism $F\to G$ for some finite rank non-Abelian free group $F$; let $N\lhd F$ be its kernel. One can realize $F$ as a lattice in $H=\SL_2(\RR)$. Consider the (isomorphic) $H$-spaces $H/P$ and  $H/P^-$, where $P$ is a minimal parabolic. In particular, those are amenable $H$-actions since $P$ is an amenable group, and thus also amenable $F$-actions since $F$ is closed in $H$. It follows that the $G$-actions on the factor spaces $B$, $B^-$ of $N$-ergodic components are also amenable. In view of the canonical factor map $H/P \times H/P^- \to B\times B^-$, it suffices now to show that the $F$-action on $X=H/P \times H/P^-$ is ergodic with coefficients. We claim that it suffices to show that the $H$-action has this property; that latter fact follows from the Bruhat decomposition and the Mautner phenomenon exactly as in Section~\ref{sec:semi-separable}, thus finishing the proof. As for the claim, let $f:X\to U$ be a bounded $F$-equivariant map as in Definition~\ref{defi:double}. Now ``induce'' it to an $H$-equivariant map $X\to L^2(H/F;U)$. If the latter is constant, then so is $f$, proving the claim.

\smallskip

Here is an elementary consequence that could perhaps also be established directly using an appropriate fixed point theorem:

\begin{lemma}\label{lemma:h1}
Let $G$ be locally compact second countable group and $V$ a semi-separable coefficient $G$-module. Then $\hb^1(G;V)$ vanishes and $\hb^2(G;V)$ is Hausdorff.
\end{lemma}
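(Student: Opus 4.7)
The proof uses the doubly ergodic amenable boundary of Theorem~\ref{thm:double} in a Ryll--Nardzewski-style fixed-point argument, as anticipated by the remark preceding the statement. Fix amenable $G$-spaces $B, B^-$ whose diagonal $G$-action on $B \times B^-$ is ergodic with coefficients; since $B \times B^-$ is itself amenable as a $G$-space, Proposition~\ref{prop:ind} applied with $G' = \{e\}$, $G'' = G$, $X = B \times B^-$ yields $\hb^0(G; \lw(B \times B^-; V)) = V^G$ and $\hb^n(G; \lw(B \times B^-; V)) = 0$ for $n \geq 1$ (using Lemma~\ref{lemma:ergodic} for the identification in degree zero).

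For $\hb^1(G; V) = 0$: represent an arbitrary class by a bounded measurable $1$-cocycle $c : G \to V$. The affine $G$-action $g \star v := \pi(g)v + c(g)$ on $V$ has the orbit of $0$ equal to the bounded set $\{c(g)\}$, whose weak-$*$-closed convex hull $K \subset V$ is weak-$*$ compact by Banach--Alao\u{g}lu and stable under the affine action. Linearising via the Banach $G$-module $V \oplus \RR$ with action $g \cdot (v,t) := (\pi(g)v + tc(g), t)$, the slice $K \times \{1\}$ becomes a genuinely $G$-invariant weak-$*$ compact convex subset, so amenability of $G \curvearrowright B$ supplies a bounded measurable $\sigma : B \to K$ with $\sigma(g\omega) = \pi(g)\sigma(\omega) + c(g)$ almost everywhere; likewise produce $\sigma' : B^- \to K$. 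The difference $\tilde\sigma(\omega, \omega^-) := \sigma(\omega) - \sigma'(\omega^-)$ is bounded measurable $B \times B^- \to V$ and $G$-equivariant for the genuine \emph{linear} action (the cocycle contributions cancel). Post-composing with the semi-separability embedding $V \hookrightarrow U$, ergodicity with coefficients of $B \times B^-$ forces $\tilde\sigma$ essentially constant; since it separates the two variables, both $\sigma$ and $\sigma'$ are individually essentially constant, say $\sigma \equiv v_0 \in V$ with the a priori bound $\|v_0\| \leq \|c\|_\infty$. The equivariance condition becomes $c(g) = v_0 - \pi(g)v_0$, exhibiting $c$ as the coboundary of $-v_0$.

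For $\hb^2(G; V)$ Hausdorff: combine the initial observation with the long exact sequence associated to $0 \to V \to \lw(B \times B^-; V) \to W \to 0$ to obtain a topological isomorphism $\hb^2(G; V) \cong \hb^1(G; W)$; it suffices to show $\hb^1(G; W)$ is Hausdorff. The uniform estimate $\|v_0\| \leq \|c\|_\infty$ from the previous paragraph upgrades to an approximate form: a bounded $1$-cochain $b : G \to V$ with small coboundary $\|db\|_\infty \leq \varepsilon$ yields, by the same sections construction, an element $v \in V$ with $\|b - (\pi(\cdot)v - v)\|_\infty \leq K\varepsilon$ for a universal constant. Transporting this quantitative vanishing to $W$ via the $G$-equivariant injection $W \hookrightarrow \lw(B\times B^-; U)/U$ induced by $V \hookrightarrow U$, a Cauchy sequence of coboundaries $db_n \to c$ in $Z^1(G; W)$ produces witnesses $b_n$ that are Cauchy up to coboundaries; their weak-$*$ accumulation point $b_\infty$ satisfies $db_\infty = c$, whence the coboundary image is closed.

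The main technical obstacle is two-fold: first, the set-up of Zimmer amenability producing equivariant measurable sections into weak-$*$ compact convex subsets of dual Banach $G$-modules (handled by the $V \oplus \RR$ linearisation); second, the promotion of the exact $\hb^1$-vanishing to a quantitative approximate version with uniform constants, which is what underlies the Hausdorffness of $\hb^2$. Both steps are natural extensions of the core fixed-point construction, but each demands careful verification in the weak-$*$ setting of coefficient modules.
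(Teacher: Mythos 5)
Your route to $\hb^1(G;V)=0$ is a workable, if much heavier, alternative to the paper's: the paper simply realizes $\hb^\bu(G;V)$ on the amenable resolution $\lw(B^{\bu+1};V)^G$ and observes via Lemma~\ref{lemma:ergodic} that this complex begins $0\to V^G\xrightarrow{\ 0\ }V^G\xrightarrow{\ \epsilon\ }\lw(B^3;V)^G$, with $\epsilon$ the isometric (hence closed-range) inclusion of constants; both assertions of the lemma drop out at once. Your fixed-point argument is essentially a hands-on unwinding of the degree-one part of this, and it can be completed, though the step ``amenability of $G\curvearrowright B$ supplies $\sigma:B\to K$'' needs more care than you give it: one must check that the affine action on $K$, after the $V\oplus\RR$ linearisation, really fits the separable-predual format of Zimmer's fixed-point characterisation, and that $c$ has the requisite measurability.

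The Hausdorffness argument, however, has a genuine gap, in two places. First, the ``quantitative'' upgrade — that $\|db\|_\infty\leq\varepsilon$ forces $b$ to be $K\varepsilon$-close to a coboundary — does not follow from your construction: if $b$ is not a cocycle, then $g\star v=\pi(g)v+b(g)$ is not a group action, so there is no invariant weak-$*$ compact convex set $K$ and no twisted equivariant section to apply double ergodicity to; an entirely new argument with controlled constants would be needed. Second, even granting such an estimate for $V$, the module you must apply it to is $W=\lw(B\times B^-;V)/V$, and this quotient is \emph{not} semi-separable in general — it is exactly of the shape $\lft(\Omega;V_0)/V_0$ appearing in Example~\ref{ex:contre-ex} — so the ergodicity-with-coefficients step, which is precisely where semi-separability enters your degree-one argument, is unavailable for $W$; the proposed injection $W\hookrightarrow\lw(B\times B^-;U)/U$ does not repair this, since the target is again a quotient by $U$ and not a separable coefficient module. (Indeed $\hb^1(G;W)\cong\hb^2(G;V)$ is typically nonzero, so no vanishing-type statement for $W$ is available, only closedness of coboundaries, and that is what remains unproved.) The paper sidesteps all of this because in its resolution the space of $2$-coboundaries is exactly the image of the isometry $\epsilon$, which is closed by inspection.
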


\begin{proof}
Let $B$ and $B^-=B$ be as in Theorem~\ref{thm:double} and realize $\hb^\bu(G;V)$ as the cohomology of the complex
$$ 0\lra \lw(B;V)^G \lra \lw(B^2;V)^G \lra \lw(B^3;V)^G \lra \cdots$$
By Lemma~\ref{lemma:ergodic} and the definition of the homogeneous coboundary maps, this complex starts with
$$ 0\lra V^G \xrightarrow{\ 0\ } V^G \xrightarrow{\ \epsilon\ } \lw(B^3;V)^G \lra \cdots$$
where $\epsilon$ is the inclusion of constant maps. This proves both claims since $\epsilon$ has closed range.
\end{proof}

Notice that the first statement of Lemma~\ref{lemma:h1} can be considered as the trivial case of our main results when the number of factors in a product (or the rank of a semi-simple group) is one.

\subsection{On Theorem~\ref{thm:splitting} and consequences}
We shall use the following fact, which follows from the definition of amenable actions: For a finite family of groups each given with an amenable action, the product action of the product of these groups on the product space is also amenable.

\begin{proof}[Proof of Theorem~\ref{thm:splitting}]
We use the notations of the theorem. For each $1\leq i\leq \ell$, we denote by $B_i$ and $B_{i+\ell}$ amenable $G_i$-spaces such that the diagonal $G_i$-action on $B_i\times B_{i+\ell}$ is ergodic with coefficients (by virtue of Theorem~\ref{thm:double}). We adopt now the notation of Section~\ref{sec:joins} with $r=2\ell$ and endow all spaces $B^{(p)}$ with their natural $G$-action. Defining $W_p=\lw(B^{(p)};V)$, Lemmata~\ref{lemma:join} and~\ref{lemma:closed} put us in the situation of Remark~\ref{rem:ex_sequence}. Therefore, we can apply Proposition~\ref{prop:ex_sequence} and thus conclude the proof of Theorem~\ref{thm:splitting} if we show that $\hb^q(G;W_p)$ vanishes for all $q\geq 0$ and all $0\leq p \leq r-1$.

Since $\hb^\bu(G;-)$ commutes with finite direct sums~\cite[8.2.10]{Monod}, it suffices to show that $\hb^q(G; \lw(B_I;V))$ vanishes whenever $|I|=p+1$. We write $G=G'\times G''$, where $G''$ is the product of all $G_i$ with either $i$ or $i+\ell$ in $I$, and $G'$ the product of the remaining factors. The $G''$-action on $B_I$ is amenable and we claim that it is ergodic with coefficients. Indeed, let $U$ be a separable coefficient $G''$-module; we need to show that every element in $\lw(B_I;U)^{G''}$ is constant. There is a natural identification of coefficient $G''$-modules~\cite[2.3.3]{Monod}
$$\lw(B_I;U)\ \cong\ \lw(X_{i_p};\lw(X_{i_{p-1}}; \ldots; \lw(X_{i_0};U)\ldots )),$$
where $I=\{i_0, \ldots, i_p\}$. Using successively the invariance under $G_{i_0}$ (or $G_{i_0 - \ell}$), then $G_{i_1}$ (or $G_{i_1 - \ell}$), and so on, we find that $G''$-invariant elements are constant, establishing the claim. We are thus in the setting of Proposition~\ref{prop:ind} and deduce
\begin{equation}\label{eq:thm:splitting}
\hb^q(G; \lw(B_I;V))\ \cong\ \hb^q(G'; V^{G''}).
\end{equation}
Since $I$ contains at least one element, there is at least one factor occurring in $G''$; therefore $V^{G''}=0$ and the above vanishes as was to be shown.
\end{proof}

\begin{remark}\label{rem:thm:splitting}
Recall~\eqref{eq:hb_cm} that the right hand side in~\eqref{eq:thm:splitting} is identical with $\hb^q(G'; \cm_{G'}V^{G''})$, and of course
$$\cm_{G'}V^{G''} = \cm_G V^{G''} \se \cm_G V^{G_i}$$
for any factor $G_i$ of $G''$. Therefore, we established the conclusion of Theorem~\ref{thm:splitting} under the slightly weaker assumption that $V$ is a semi-separable coefficient $G$-module with $\cm V^{G_i}=0$ for all $\ 1\leq i\leq \ell$.
\end{remark}

\begin{proof}[Proof of Corollary~\ref{cor:split_lattice}]
We use the notations of Section~\ref{sec:lattices} and recall that the induced $G$-module $V$ is semi-separable, see Lemma~\ref{lemma:ind:lattices}. Suppose that $W_{G_j}$ vanishes for all $j$. By Lemma~\ref{lemma:fixed_induced}, this implies $\cm V^{G_i}=0$ for all $\ 1\leq i\leq \ell$. In view of the refinement of Theorem~\ref{thm:splitting} established in the above Remark~\ref{rem:thm:splitting}, we deduce that $\hb^n(G; V)$ vanishes for all $n<2\ell$. (Instead of using Remark~\ref{rem:thm:splitting}, one can replace Lemma~\ref{lemma:fixed_induced} by Lemma~\ref{lemma:fixed_induced_notcont}.) By the induction isomorphism~\eqref{eq:induction}, it follows that $\hb^n(\Gamma; W)$ vanishes in the same range, finishing the proof.
\end{proof}

\begin{proof}[Proof of Corollary~\ref{cor:rstriction_product}]
The proof of Corollary~\ref{cor:restriction} applies word for word, replacing only Theorem~\ref{thm:semi-simple} with Theorem~\ref{thm:splitting}.
\end{proof}

\subsection{Global fields and ad\`eles}
Let $K$ be a global field and $\GG$ a connected simply connected almost $K$-simple $K$-group. Recall that the ring $\AAA_K$ of ad\`eles is the restricted product of the completions $K_v$ where $v$ ranges over the set $\SV$ of places of $K$. We write $\Gamma=\GG(K)$ and $G=\GG(\AAA_K)$ and recall that $\Gamma$ is a lattice in $G$ by results of A.~Borel and Behr--Harder (see~\cite[I.3.2.2]{Margulis}).

For any $\SU\se\SV$ we denote by $G_\SU$ the direct factor of $G$ obtained as restricted product over $\SU$. Let $\SA\se\SV$ be the set of places $v$ such that $\GG$ is $K_v$-anisotropic and set $\SI=\SV\sm\SA$, so that $G=G_\SA\times G_\SI$. Recall that $\SA$ is finite (see \emph{e.g.}~\cite[4.9]{Springer77}) and that $G_\SA$ is compact. The strong approximation theorem (see \emph{e.g.}~\cite[II.6.8]{Margulis}) states that $\Gamma\cdot G_\SU$ is dense in $G$ as soon as $\SU\se \SV$ contains at least one element of $\SI$.

\begin{proof}[Proof of Theorem~\ref{thm:adelic_rigidity}]
Let $W$ be a semi-separable coefficient $\GG(K)$-module and assume that $\hb^n(\GG(K);W)\neq 0$ for some $n$. Let $\ell>n/2$ and choose a partition $\SV=\SV_1\sqcup\ldots\sqcup\SV_\ell$ such that each $\SV_i$ contains at least an element of $\SI$. In view of the preceding discussion, we are contemplating an irreducible lattice
\begin{equation}\label{eq:decompose_adelic}
\Gamma<G = G_{\SV_1} \times \cdots \times G_{\SV_\ell}.
\end{equation}
Therefore, Corollary~\ref{cor:split_lattice} applies and proves the theorem.
\end{proof}

Suppose that we choose the above decomposition of $\SV$ in such a way that each $\SV_i$ contains infinitely many places. Then, when Corollary~\ref{cor:split_lattice} states that for some $i$ the module $W_{G_{\SV_i}}$ is non-zero, it provides us indeed with a $G$-representation which is trivial on infinitely many local factors $\GG(K_v)$.

\begin{proof}[Proof of Corollary~\ref{cor:adelic_restriction}]
We perform the same decomposition~\eqref{eq:decompose_adelic} and then apply Corollary~\ref{cor:rstriction_product}.
\end{proof}

\section{Further considerations}
This section presents additional material not needed for the body of the article and therefore the presentation is more elliptical.

\subsection{Towards stabilisation}
We would like to suggest how certain results of this paper could perhaps be used in the study of \emph{stabilisation}. Given a sequence $G_n \se G_{n+1}\se \ldots$ of classical groups $G_n$ with natural inclusions, the question is whether and in what range the corresponding restriction maps are isomorphisms for cohomology with trivial coefficients. The classical situation is well understood and is of importance notably \emph{via} the cohomology of the limiting object $G_\infty$, which plays a r\^ole in topology (see \emph{e.g.}~\cite{Cartan61} and~\cite[\S12]{Borel74}). In bounded cohomology, some rather limited information is known for $\SL_n$, see~\cite{MonodJAMS}.

\medskip

Let $G$ be a general semi-simple group as in Section~\ref{sec:tits} and keep the notation introduced there; in particular, $r=|S|=\rank(G)$. For any $I\se S$ we consider the (generalised) L\'evi decomposition $P_I= V_I \rtimes \SZ_G(T_I)$ as introduced in Section~\ref{sec:vanish_ss}. We denote by $G_I$ the quotient of $\SZ_G(T_I)$ by its centre and call it the \emph{semi-simple part} of $P_I$.

\begin{theorem}\label{thm:spectral}
There is a first quadrant spectral sequence $\ee_\bu^{\bu, \bu}$ converging to zero below degree $r$ such that
$$\ee_1^{p, q}\ \cong\ \bigoplus_{|I| = r-p}\hb^q(G_I) \kern1cm(\forall\,0\leq p\leq r,\,\forall\, q).$$
Moreover, these isomorphisms intertwine the differential with signed sums of restriction maps.
\end{theorem}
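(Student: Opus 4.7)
The plan is to apply the hypercohomology spectral sequence of $\hb^\bu(G;-)$ to the augmented Tits complex of Theorem~\ref{thm:complexe:lw}. We rewrite that theorem (with $V=\RR$) as the complex $C^\bu$ of coefficient $G$-modules given by
$$C^p\ =\ \bigoplus_{|I|=r-p}\lw(G/P_I;\RR) \kern5mm (0\leq p\leq r);$$
in particular $C^0=\RR$ corresponds to $I=S$, $P_S=G$. By Theorem~\ref{thm:complexe:lw} the complex $C^\bu$ is exact in positions $0$ through $r-1$, and its final differential has closed range; equivalently, the cohomology of $C^\bu$ is concentrated in degree $r$ and equals $W=C^r/dC^{r-1}$, a coefficient $G$-module by the closed-range theorem (compare Remark~\ref{rem:ex_sequence}).

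Building a double complex by resolving each $C^p$ via the standard resolutions $\lw(G^{\bu+1};C^p)$ and taking $G$-invariants produces a first-quadrant spectral sequence
$$\ee_1^{p,q}\ =\ \hb^q(G;C^p)\ \Longrightarrow\ \hb^{p+q-r}(G;W),$$
whose abutment vanishes whenever $p+q<r$, giving the required convergence to zero below degree $r$. That this construction is meaningful in bounded cohomology relies on the fact that all differentials of $C^\bu$ are dual morphisms of coefficient modules with closed ranges; one can equivalently build the spectral sequence by iterating the long exact sequence of~\cite[8.2.1(ii)]{Monod}, exactly as in the proof of Proposition~\ref{prop:ex_sequence}.

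The identification of $\ee_1^{p,q}$ proceeds summand by summand: since $\hb^q(G;-)$ commutes with finite direct sums~\cite[8.2.10]{Monod}, it suffices to treat one piece $\hb^q(G;\lw(G/P_I;\RR))$, which Eckmann--Shapiro~\cite[10.1.2(v),\,10.1.3]{Monod} identifies with $\hb^q(P_I;\RR)$. The L\'evi decomposition $P_I=V_I\rtimes\SZ_G(T_I)$ provides the amenable (unipotent) normal subgroup $V_I$, so \cite[8.5.3]{Monod} gives $\hb^q(P_I;\RR)\cong\hb^q(\SZ_G(T_I);\RR)$; applying the same principle to the amenable centre of $\SZ_G(T_I)$ yields $\hb^q(\SZ_G(T_I);\RR)\cong\hb^q(G_I;\RR)$, whence $\ee_1^{p,q}\cong\bigoplus_{|I|=r-p}\hb^q(G_I)$. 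The horizontal differential of $C^\bu$ is the signed sum of pull-backs along the canonical projections $G/P_I\to G/P_J$ (defined for $I\subsetneq J$ with $|J|=|I|+1$, since $T_J\subset T_I$ forces $P_I\subset P_J$); by naturality of the Eckmann--Shapiro isomorphism with respect to the inclusion $P_I\subset P_J$ and of the amenable-radical factorisation with respect to the L\'evi inclusion $\SZ_G(T_I)\subset\SZ_G(T_J)$, this descends to the claimed signed sum of restriction maps $\hb^q(G_J)\to\hb^q(G_I)$.

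The principal technical obstacle is the rigorous construction of the spectral sequence: one must keep all differentials dual and all modules coefficient in order to invoke~\cite[8.2.1(ii)]{Monod} together with the closed-range property of the final differential of Theorem~\ref{thm:complexe:lw}. The rest is essentially routine once one observes that the Eckmann--Shapiro and amenable-radical isomorphisms are both natural in the relevant subgroups.
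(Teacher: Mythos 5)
Your proposal is correct and follows essentially the same route as the paper's (itself only sketched) proof: the same double complex $\lw(G^{\bu+1}\times\ST^{(\bu-1)})^G$ built from the augmented complex of Theorem~\ref{thm:complexe:lw}, with one spectral sequence degenerating by exactness to give vanishing below degree $r$, and the other identified on $\ee_1$ via Eckmann--Shapiro and factoring out the amenable unipotent radical and centre, the differentials being matched by naturality of these isomorphisms. No substantive differences to report.
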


The convergence ``below degree $r$'' means $\ee^{p,q}_\infty = 0$ for all $p+q<r$. As for the last statement, it is to be precised as follows: there are indeed inclusions $\SZ_G(T_I)\se \SZ_G(T_J)$ whenever $I\se J$; the corresponding restriction map descends to a map $\hb^\bu(G_J)\to \hb^\bu(G_J)$ via the inflation isomorphisms.

\medskip

In order to use Theorem~\ref{thm:spectral} for stabilisation, one probably ought to use the fact that these restrictions are modelled on the acyclic simplex of proper subsets of $S$ with dual inclusion. (For instance, this fact shows \emph{a contrario} that the differential of Theorem~\ref{thm:spectral} must vanish on $\ee_1$ at those indices where stabilisation holds.)

\begin{proof}[Proof of Theorem~\ref{thm:spectral}, sketch]
Consider the double complex defined by
$$\lft(G^{p+1} \times \ST^{(q-1)})^G \kern1cm(p,q\geq 0)$$
with $\ST^{(-1)}=G/G$ and the usual convention $\lft(\vn)=0$ for $q>r$. Arguing exactly as in~\cite{MonodJAMS},\cite{MonodMRL}, one can apply Theorem~\ref{thm:complexe:lw} and~\cite[8.2.5]{Monod} to establish that one of the two associated spectral sequences converges to zero below degree $r$. (More precisely, everywhere except in bi-degree $(p,q)$ with $q=r$.) One of the few differences is that we chose here to include the empty simplex $\ST^{(-1)}$ so that we are really considering the spectral sequences resulting from applying the functor $\hb^\bu(G,-)$ to the \emph{augmented} complex of Theorem~\ref{thm:complexe:lw}. The other spectral sequence, whilst it abuts to the same result up to grading (which is irrelevant for vanishing), has the following first tableau:
$$\ee^{p,q}_1 = \hb^q\big(G;\lft(\ST^{(q-1)})\big) \cong  \bigoplus_{|I| = r-p}\hb^q(P_I) \cong \bigoplus_{|I| = r-p}\hb^q(G_I).$$
The statement about the differentials is established by a calculation using~10.1.7 and~10.1.2(v) in~\cite{Monod} and the fact that we have commuting restrictions and inflations.
\end{proof}

The bounds in Theorem~\ref{thm:spectral} are certainly not optimal since we could use the group $\ti G=G\times G$ as in the proof of the vanishing results. However, the interpretation of the differentials needs then to be adapted.

\subsection{Application to ergodic theory}
The orbit-equivalence rigidity results of~\cite{Monod-Shalom2} make extensive use of the \emph{second} bounded cohomology. More precisely, the class $\mathcal{C}$ of all countable groups $\Gamma$ with non-vanishing $\hb^2(\Gamma;V)$ for some mixing unitary representation $V$ is considered. On the one hand, this class contains all groups that are \emph{negatively curved} in a rather general sense. On the other hand, the methods introduced in~\cite{Burger-Monod3},\cite{Monod} provide an array of tools to analyse $\hb^2$, including a ``splitting'' result implying the degree two case of our Theorem~\ref{thm:splitting}.

\smallskip

A careful review of the proofs given in~\cite{Monod-Shalom2} shows that they can be extended to a more general case using Theorem~\ref{thm:splitting}. Let thus $\mathcal{C}^n$ be the class of countable groups $\Gamma$ with non-vanishing $\hb^p(\Gamma;V)$ for some $p\leq n$ and some mixing unitary representation $V$. As a first example, here is an extension of the ``prime factorization'' phenomenon (see the discussion of Theorem~1.16 in~\cite{Monod-Shalom2}). We refer to that paper for terminology.

\begin{theorem}\label{thm:me}
Let $\Gamma = \Gamma_1 \times \cdots \times \Gamma_n$ and $\Lambda = \Lambda_1 \times \cdots \times \Lambda_n$ be products of torsion-free (infinite) countable groups. Assume that all the $\Gamma_i$ are in~$\mathcal{C}^{2n-1}$.

\nobreak
If $\Gamma$ is measure-equivalent to $\Lambda$, then after permutation of the indices $\Gamma_i$ is measure-equivalent to $\Lambda_i$ for all $i$.
\end{theorem}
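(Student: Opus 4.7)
The plan is to extend the degree-two argument of Monod--Shalom~\cite{Monod-Shalom2} to arbitrary $n$, substituting for their $\hb^2$-splitting result the higher-degree vanishing afforded by Theorem~\ref{thm:splitting}. The class $\mathcal{C}^{2n-1}$ is calibrated precisely to this threshold: the vanishing $\hb^p(G;V)=0$ for all $p<2n$ supplied by Theorem~\ref{thm:splitting}, under the assumption that no factor $G_i$ has invariants in $V$, exactly matches the non-vanishing hypothesis defining $\mathcal{C}^{2n-1}$.

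First, I would set up the transfer machinery. An ME coupling $(\Omega,m)$ between $\Gamma$ and $\Lambda$ yields measurable cocycles $\alpha\colon\Lambda\times Y\to\Gamma$ and $\alpha'\colon\Gamma\times Y'\to\Lambda$ from fundamental domains. For any semi-separable coefficient $\Gamma$-module $V$, the twisted representation~\eqref{eq:twisted} combined with Lemma~\ref{lemma:semi-separable} yields a semi-separable coefficient $\Lambda$-module $\widetilde{V}=\lw(Y;V)$ and a canonical ME-induction isomorphism $\hb^\bu(\Gamma;V)\cong\hb^\bu(\Lambda;\widetilde{V})$, parallel to the lattice-induction~\eqref{eq:induction} (see~\cite[\S4]{Monod-Shalom2} for the ME version).

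Next, for each $i\in\{1,\dots,n\}$, I would pick a mixing unitary $\Gamma_i$-representation $V_i$ with $\hb^{p_i}(\Gamma_i;V_i)\neq 0$ for some $p_i\leq 2n-1$, inflate it to a $\Gamma$-module through the projection $\Gamma\to\Gamma_i$ (preserving non-vanishing since the kernel $\prod_{i'\neq i}\Gamma_{i'}$ acts trivially, so the inflation isomorphism of \cite[8.5.3]{Monod} applies after noting the kernel is amenable only in an \emph{ad hoc} sense; more concretely, one uses that an invariant cocycle on $\Gamma_i$ extends tautologically), and transfer through the coupling. Since $p_i<2n$, applying Theorem~\ref{thm:splitting} to $\Lambda = \Lambda_1\times\cdots\times\Lambda_n$ forces $\widetilde{V_i}^{\Lambda_j}\neq 0$ for some $j$, and I would define $\sigma(i):=j$; uniqueness of $j$ (hence well-definedness of $\sigma$) will follow from the mixing of $V_i$, which rules out simultaneous invariance by two distinct $\Lambda_j$ after unpacking through the cocycle.

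The hard part will be translating the abstract invariance $\widetilde{V_i}^{\Lambda_j}\neq 0$ into a concrete measurable factorisation of $\alpha$ through $\Lambda/\Lambda_j$, and thence into an ME coupling between $\Gamma_i$ and $\Lambda_{\sigma(i)}$. Concretely, the invariance produces a measurable $\Gamma$-equivariant section $Y\to V_i$ that is $\Lambda_j$-invariant through the cocycle; mixing of the $\Gamma_i$-action on $V_i$, combined with torsion-freeness to exclude finite-stabiliser pathologies, must then force the $\Gamma$-coordinate of $\alpha$ to factor through $\Gamma_i$ on the $\Gamma$-side and through $\Lambda/\Lambda_j$ on the $\Lambda$-side. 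This is where the original Monod--Shalom argument~\cite[\S7]{Monod-Shalom2} concentrates the bulk of its technical weight in the case $n=1$, and the higher-$n$ analogue will likely require either an iterated peel-off of factors (applying Theorem~\ref{thm:splitting} first to detect one factor, descending to a quotient coupling, and repeating) or a simultaneous analysis of all $n$ factors at once. Once the measurable factorisations are in hand, a symmetric run of the argument on $\Lambda$ (membership in $\mathcal{C}^{2n-1}$ is preserved by the transferred coupling) together with a bijectivity count on the $n$ factors on each side produces the permutation $\sigma\in S_n$ and the pairwise measure-equivalences $\Gamma_i\sim_{\mathrm{ME}}\Lambda_{\sigma(i)}$.
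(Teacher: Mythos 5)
Your proposal follows essentially the same route as the paper's own (itself only sketched) argument: replace the degree-two splitting result of Monod--Shalom by Theorem~\ref{thm:splitting} applied to the semi-separable $\lft$-induced module, use injectivity of the inflation $\hb^\bu(\Gamma_i;V)\to\hb^\bu(\Gamma;V)$ (restriction is a left inverse) and of $\lft$-ME-induction in all degrees, and then let the cocycle-untwisting machinery of~\cite{Monod-Shalom2} run unchanged. The only imprecision is that ME-induction is known to be injective rather than an isomorphism, but injectivity is all that either you or the paper actually uses.
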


The discussion of this section has to be taken with a \emph{caveat} since we do not know examples showing that the inclusions
$$\mathcal{C}=\mathcal{C}^2\se \cdots \se \mathcal{C}^n\se \mathcal{C}^{n+1}\se \cdots$$
are proper. In the opposite direction, however, the results of the introduction show that $\Gamma\notin \mathcal{C}^n$ when $\Gamma$ is either a lattice in a semisimple group of rank~$>n/2$  or a product of more than $n/2$ infinite factors (or a lattice in such a product). Further, a group $\GG(K)$ as in Section~\ref{sec:intro_adelic} does not belong to any $ \mathcal{C}^n$ at all.

\medskip

Our second example extends Corollary~2.21 in~\cite{Monod-Shalom2}.

\begin{theorem}\label{thm:oe}
Let $\Gamma=\Gamma_1\times \cdots\times \Gamma_n$ be a product of torsion-free groups in~$\mathcal{C}^{2n-1}$ and let $Y$ be a mildly mixing $\Gamma$-space.

\nobreak
If this action is orbit-equivalent to any mildly mixing action of any torsion-free countable group $\Lambda$ on a probability space $X$, then there is an isomorphism $\Lambda\cong\Gamma$. Moreover, the actions on $X$ and $Y$ are then isomorphic.
\end{theorem}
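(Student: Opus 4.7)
The plan is to transpose verbatim the orbit-equivalence argument of Corollary~2.21 in~\cite{Monod-Shalom2} to higher degrees, the sole substantive change being the replacement of the degree-two splitting theorem for products (from~\cite{Burger-Monod3,Monod-Shalom2}) by our Theorem~\ref{thm:splitting}. The latter vanishes $\hb^p$ for $p<2n$ in presence of a product decomposition into $n$ factors, which matches exactly the class $\mathcal{C}^{2n-1}$ of the hypothesis on the $\Gamma_i$.

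First I would extract from the orbit equivalence a measurable cocycle $\beta:\Gamma\times Y\to\Lambda$ which is essentially the identity on orbits, together with its reverse. For each $i$, fix a mixing unitary $\Gamma_i$-representation $V_i$ and a non-zero class
$$\kappa_i\in\hb^{p_i}(\Gamma_i;V_i)\kern8mm(p_i\leq 2n-1)$$
granted by the assumption $\Gamma_i\in\mathcal{C}^{2n-1}$. Inflating $\kappa_i$ along the projection $\Gamma\to\Gamma_i$ and transporting the result through $\beta$ by twisted induction (as in~\cite[\S4]{Monod-Shalom2}) yields a class in $\hb^{p_i}(\Lambda;\lft(Y;V_i))$, where the $\Lambda$-action on the coefficients is given by the formula~\eqref{eq:twisted}. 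The coefficient module is semi-separable by Lemma~\ref{lemma:semi-separable}, and non-vanishing of the pulled-back class is guaranteed by the mild mixing of the $\Gamma$-action on $Y$, which ensures that the twisted induction does not kill $\kappa_i$.

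Once an approximate product decomposition $\Lambda=\Lambda_1\times\cdots\times\Lambda_n$ is obtained (following the centralizer construction of~\cite[\S4]{Monod-Shalom2}, built from the subgroups $\beta(\Gamma_i\times Y)$ which pairwise commute modulo null sets thanks to the product structure of $\Gamma$), Theorem~\ref{thm:splitting} in the refined form of Remark~\ref{rem:thm:splitting} forces some $\cm_\Lambda\lft(Y;V_i)^{\Lambda_{\sigma(i)}}$ to be non-zero, for some index $\sigma(i)$. This is precisely the cocycle reduction statement asserting that the image of $\Gamma_i$ under $\beta$ is, up to coboundary, essentially contained in a single factor $\Lambda_{\sigma(i)}$. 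Running the symmetric argument from $\Lambda$ back to $\Gamma$, and using that distinct mixing representations $V_i$ cannot simultaneously collapse onto the same $\Lambda_j$, one shows that $\sigma$ is a permutation and that each restricted cocycle produces a bona fide isomorphism $\Gamma_i\cong\Lambda_{\sigma(i)}$ intertwining the $\Gamma_i$- and $\Lambda_{\sigma(i)}$-actions; assembling these yields the required $\Gamma\cong\Lambda$ and isomorphism of the actions on $X$ and $Y$.

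The hard part will be upgrading non-vanishing of $\hb^{p_i}$ for $p_i>2$ into a literal cocycle reduction. In the degree-two case this is essentially automatic from the resolution of $\hb^2$ by bounded measurable boundary cocycles, but in higher degree no such direct interpretation is available. My plan would be to proceed by induction on the number of factors, peeling off one factor at a time so that every individual reduction step invokes only the well-understood degree-two rigidity applied to the partially reduced cocycle; an alternative route would be to adapt the join construction of Section~\ref{sec:joins} into a higher-degree boundary calculus and read the reduction off directly. In either route, the delicate technical point is to verify that mild mixing and semi-separability of the relevant coefficient modules persist through each reduction step.
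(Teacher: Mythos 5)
You have correctly identified the two ingredients the paper actually adds to \cite{Monod-Shalom2} --- $\lft$-induction, which is injective in \emph{all} degrees and produces semi-separable coefficient modules (Lemma~\ref{lemma:semi-separable}), and the replacement of the degree-two splitting theorem by Theorem~\ref{thm:splitting} --- but you deploy Theorem~\ref{thm:splitting} on the wrong group, and this makes the plan circular. You propose to first produce a decomposition $\Lambda=\Lambda_1\times\cdots\times\Lambda_n$ ``following the centralizer construction'' and then apply Theorem~\ref{thm:splitting} to $\Lambda$. In Theorem~\ref{thm:oe} the group $\Lambda$ is an \emph{arbitrary} torsion-free countable group; the isomorphism $\Lambda\cong\Gamma_1\times\cdots\times\Gamma_n$ is the \emph{conclusion}, and no product decomposition of $\Lambda$ is available before the cocycle reduction has been carried out. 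The splitting theorem must be applied to the product that is actually given, namely $\Gamma=\Gamma_1\times\cdots\times\Gamma_n$ (or a self-coupling thereof), with coefficients in the module $\lft$-induced along the (composed) orbit-equivalence cocycles: non-vanishing of the transported class in degree $p_i\leq 2n-1<2n$ then forces, by the contrapositive of Theorem~\ref{thm:splitting}, the existence of $\Gamma_j$-invariant vectors in that induced module, and --- exactly as in Lemma~\ref{lemma:fixed_induced_notcont} --- such invariant vectors \emph{are} the cocycle-reduction statement. (Incidentally, the injectivity of $\lft$-induction is the general transfer argument from the proof that Proposition~4.5 implies Theorem~4 in \cite{Monod-Shalom2}; it does not rest on mild mixing, which is used instead for essential freeness and for upgrading the reduced coupling to a conjugacy.)

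This also dissolves what you single out as the hard part. You worry that for $p_i>2$ there is no boundary-cocycle interpretation from which to read off a reduction, and you propose to peel off one factor at a time using only degree-two rigidity. That route cannot prove the stated theorem: the hypothesis is only that each $\Gamma_i$ lies in $\mathcal{C}^{2n-1}$, i.e.\ carries a non-zero class in \emph{some} degree $p_i\leq 2n-1$, possibly strictly larger than two, and for such a factor degree-two rigidity gives no information whatsoever --- you would merely be reproving the original $\mathcal{C}=\mathcal{C}^2$ case of \cite{Monod-Shalom2}. The point of Theorem~\ref{thm:splitting} is precisely that no interpretation of the higher class is ever needed: the class is used only as an obstruction, and the reduction mechanism (factor-invariant vectors in an $\lft$-induced module, equivalently extendable orbit maps) is a degree-zero phenomenon, identical in all degrees below $2n$. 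Once this is corrected, the remaining steps you list --- injectivity of the inflation $\hb^{\bu}(\Gamma_i;V_i)\to\hb^{\bu}(\Gamma;V_i)$ via restriction, semi-separability and strong measurability of the induced modules, and the Furman-type untwisting under mild mixing --- do go through unchanged, which is the content of the paper's sketch.
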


\begin{proof}[On the proofs of Theorems~\ref{thm:me} and~\ref{thm:oe}]
We only indicate what needs to be added to the arguments in~\cite{Monod-Shalom2}. Let $V$ be a mixing unitary $\Gamma_i$-representation and view it also as a $\Gamma$-module. We first observe that the inflation map
$$\hb^\bu(\Gamma_i;V)\lra \hb^\bu(\Gamma;V)$$
in injective in all degrees since the restriction provides it with a left inverse. Given a measure- (or orbit-) equivalence, one obtains by \emph{induction} a semi-separable coefficient $\Lambda$-module $W$ of the form $W=\lw(\Omega;V)$ described in Section~\ref{sec:semi-separable}. Whilst $L^2$-induction required us to work in degree two in~\cite{Monod-Shalom2}, it was shown there that there is an injective map in any degree for $\lft$-induction
$$\hb^\bu(\Gamma;V) \lra \hb^\bu(\Lambda;W),$$
see the proof that Proposition~4.5 implies Theorem~4 in~\cite{Monod-Shalom2}. Our Theorem~\ref{thm:splitting} applies precisely in the generality of semi-separable modules such as this $\lft$-induced module and for the right degrees. Now the proofs continue unchanged; one uses that elements of $W$ are in fact strongly measurable as maps $\Omega\to V$. A last remark on the assumptions of Theorem~\ref{thm:oe} is that mildly mixing actions of torsion-free groups are automatically essentially free.
\end{proof}


\end{document}